\newtheorem{thm}{Theorem}[section]
\newtheorem{cor}[thm]{Corollary}
\newtheorem{lem}[thm]{Lemma}
\newtheorem{prop}[thm]{Proposition}
\newtheorem{defn}[thm]{Definition}
\newtheorem{rem}[thm]{Remark}
\newtheorem{ex}[thm]{Example}
\def\bC{\mathbb{C}}
\def\det{\mathrm{det}}
\def\rG{\mathrm{G}}
\def\GL{\mathrm{GL}}
\def\cI{\mathcal{I}}
\def\Ind{\mathrm{Ind}}
\def\ind{\mathrm{ind}}
\def\cK{\mathcal{K}}
\def\rL{\mathrm{L}}
\def\rM{\mathrm{M}}
\def\rN{\mathrm{N}}
\def\cO{\mathcal{O}}
\def\cP{\mathcal{P}}
\def\bQ{\mathbb{Q}}
\def\obQ{\overline{\mathbb{Q}}}
\def\res{\mathrm{res}}
\def\SL{\mathrm{SL}}
\def\rU{\mathrm{U}}
\def\bZ{\mathbb{Z}}
\begin{document}





\hypersetup{							
pdfauthor = {Peiyi Cui},			
pdftitle = {representation mod l of SL_n},			
pdfkeywords = {Tag1, Tag2, Tag3, ...},	
}					
\title{Category decomposition of $\mathrm{Rep}_k(\mathrm{SL_n}(F))$}
\author{Peiyi Cui \footnote{peiyi.cuimath@gmail.com, Department of Mathematics, Oskar-Morgenstern-Platz 1, 1090 Vienna, Austria.}}

\date{\vspace{-2ex}}
\maketitle

 \begin{abstract}
 Let $F$ be a non-archimedean local field with residual characteristic $p$, and $k$ an algebraically closed field of characteristic $\ell\neq p$. We establish a category decomposition of $\mathrm{Rep}_k(\mathrm{SL}_n(F))$ according to the $\mathrm{GL}_n(F)$-inertially equivalent supercuspidal classes of $\mathrm{SL}_n(F)$, and we give a block decomposition of the supercuspidal sub-category of $\mathrm{Rep}_k(\mathrm{SL}_n(F))$. Finally we give an example to show that in general a block of $\mathrm{SL}_n(F)$ is not defined according to a unique inertially equivalent supercuspidal classes of $\mathrm{SL}_n(F)$, which is different from the case when $\ell=0$.
 \end{abstract}

\tableofcontents

\section{Introduction}
Let $F$ be a non-archimedean local field with residual characteristic $p$, and $k$ an algebraically closed field with characteristic $\ell$ different from $p$. We say $\rG$ is a $p$-adic group if it is the group of $F$-rational points of a connected reductive group $\mathbb{G}$ defined over $F$. Let $\mathrm{Rep}_k(\rG)$ be the category of smooth $k$-representations of $\rG$. In this article, we always denote by $\rM'$ a Levi subgroup of $\mathrm{SL}_n(F)$, and we study the category $\mathrm{Rep}_k(\rM')$. 

For arbitrary $p$-adic group $\rG$, we say that $\mathrm{Rep}_k(\rG)$ has a \textbf{category decomposition} according to an index set $\mathcal{A}$, if there exists an equivalence:

\begin{equation}
\label{0001}
\mathrm{Rep}_k(\rG)\cong\prod_{\alpha\in\mathcal{A}}\mathrm{Rep}(\rG)_{\alpha},
\end{equation}
where $\mathrm{Rep}_k(\rG)_{\alpha}$ are full sub-categories of $\mathrm{Rep}_k(\rG)$. The equivalence implies that:
\begin{itemize}
\item  Each object $\Pi\in\mathrm{Rep}_k(\rG)$ can be decomposed as a direct sum $\Pi\cong\oplus_{\alpha\in\mathcal{A}}\Pi_{\alpha}$, where $\Pi_{\alpha}\in\mathrm{Rep}_k(\rG)_{\alpha}$.
\item For $i=1,2$ and $\alpha_i\in\mathcal{A}$, if $\alpha_1\neq \alpha_2$, then $\mathrm{Hom}_{\rG}(\Pi_1,\Pi_2)=0$ for $\Pi_i\in\mathrm{Rep}_k(\rG)_{\alpha_i}$;
\end{itemize}
Furthermore, if
\begin{itemize}
\item for $\alpha\in\mathcal{A}$, there is no such decomposition for $\mathrm{Rep}_k(\rG)_{\alpha}$, we say that $\mathrm{Rep}_k(\rG)_{\alpha}$ is \textbf{non-split}. 
\end{itemize}
If $\mathrm{Rep}_k(\rG)_{\alpha}$ is non-split for each $\alpha\in\mathcal{A}$, we call this category decomposition a \textbf{block decomposition} of $\mathrm{Rep}_k(\rG)$, which means the finest category decomposition of $\mathrm{Rep}_k(\rG)$, and we call each $\mathrm{Rep}_k(\rG)_{\alpha}$ a \textbf{block} of $\mathrm{Rep}_k(\rG)$.

When $\ell=0$, a block decomposition of $\mathrm{Rep}_k(\rG)$ has been established according to $\mathcal{A}=\mathcal{SC}_{\rG}$, where $\mathcal{SC}_{\rG}$ is the set of $\rG$-inertially equivalent supercuspidal classes of $\rG$ (see Section \ref{section 002} for the definition). Let $[\rM,\pi]_{\rG}\in\mathcal{SC}_{\rG}$, where $(\rM,\pi)$ is a supercuspidal pair of $\rG$ (see Section \ref{section 002}). The sub-category $\mathrm{Rep}_k(\rG)_{[\rM,\pi]_{\rG}}$ consists of the objects whose irreducible sub-quotients have supercuspidal supports (see Section \ref{section 002}) in $[\rM,\pi]_{\rG}$.

When $\ell$ is positive, a block decomposition has been established when $\mathbb{G}$ is $\mathrm{GL}_n$ (\cite{Helm}) and its inner forms (\cite{SeSt}). For $\mathbb{G}=\mathrm{GL}_n$, the block decomposition is according to $\mathcal{SC}_{\rG}$ as well, which is the same as the case when $\ell=0$. It is worth noting that when we restrict the block decomposition in Equation \ref{0001} to the set of irreducible $k$-representations of $\rG$, the block decomposition according to $\mathcal{SC}_{\rG}$ requires that the supercuspidal support of an irreducible $k$-representation of $\rG$ belongs to a unique $\rG$-inertially equivalent supercuspidal class, which can be deduced from the uniqueness of supercuspidal support proved in \cite[\S V.4]{V2}  for $\mathrm{GL}_n(F)$. However the uniqueness of supercuspidal support is not true in general, in \cite{Da} an irreducible $k$-representation of $\mathrm{Sp}_8(F)$ that the supercuspidal support is not unique up to $\mathrm{Sp}_8(F)$-conjugation has been found. As for $\mathrm{SL}_n(F)$, the uniqueness of supercuspidal support holds true and has been proved in \cite{C2}, hence the block decomposition according to $\mathrm{SL}_n(F)$-inertially equivalent supercuspidal classes was expected. However in this article, we show that this is \textbf{not} always true by providing a counter-example in Section \ref{section 5}. 

\subsection{Main results} Now we describe the work in this article with more details. Let $\rM'$ be a Levi subgroup of $\mathrm{SL}_n(F)$, and $\rM$ be a Levi subgroup of $\mathrm{GL}_n(F)$ such that $\rM\cap\mathrm{SL}_n(F)=\rM'$, we establish a category decomposition of $\mathrm{Rep}_k(\rM')$ according to $\rM$-inertially equivalent supercuspidal classes $\mathcal{SC}_{\rM'}^{\rM}$ (see Section \ref{section 002} for the definitions), which is different from $\mathcal{SC}_{\rM'}$ the set of $\rM'$-inertially equivalent supercuspidal classes . In fact, let  $\rL$ be a Levi subgroup of $\rM$ and $\rL'=\rL\cap\rM'$ a Levi subgroup of $\rM'$, and let $\tau$ be an irreducible supercuspidal $k$-representation of $\rL$. Denote by $\mathcal{I}(\tau)$ the set of isomorphic classes of irreducible components of $\tau\vert_{\rL'}$. Let $\tau'\in\mathcal{I}(\tau)$, denote by $[\rL',\tau']_{\rM'}$ the $\rM'$-inertially equivalent supercuspidal class defined by $(\rM',\tau')$. The $\rM$-inertially equivalent supercuspidal class of $(\rL',\tau')$ is $\cup_{\gamma'\in\mathcal{I}(\tau)}[\rL',\gamma']_{\rM'}$, and we denote it by $[\rL',\tau']_{\rM}$.

\begin{thm}[Theorem \ref{thm 009}]
\label{thm 111}
Let $\mathcal{SC}_{\rM'}^{\rM}$ be the set of $\rM$-inertially equivalent supercuspidal classes of $\rM'$. There is a category decomposition of $\mathrm{Rep}_k(\rM')$ according to $\mathcal{SC}_{\rM'}^{\rM}$. 

In particular, let $[\rL',\tau']_{\rM}\in\mathcal{SC}_{\rM'}^{\rM}$, a $k$-representation $\Pi$ of $\rM'$ belongs to the full sub-category $\mathrm{Rep}_k(\rM')_{[\rL',\tau']_{\rM}}$, if and only if the supercuspidal support of each of its irreducible sub-quotients belongs to $[\rL',\tau']_{\rM}$.
\end{thm}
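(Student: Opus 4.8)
The plan is to obtain the category decomposition for $\rM'$ by restricting the well-understood decomposition for $\rM$ (a product of general linear groups, for which the block decomposition according to $\mathcal{SC}_{\rM}$ is known by \cite{Helm}, \cite{SeSt}) along the inclusion $\rM'\hookrightarrow\rM$. First I would recall that for each $\rM$-inertial class $\mathfrak{s}=[\rL,\tau]_{\rM}\in\mathcal{SC}_{\rM}$ we have the full subcategory $\mathrm{Rep}_k(\rM)_{\mathfrak{s}}$, and that every $\Sigma\in\mathrm{Rep}_k(\rM)$ splits canonically as $\bigoplus_{\mathfrak{s}}\Sigma_{\mathfrak{s}}$. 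The first step is to show that the functor $\mathrm{Res}^{\rM}_{\rM'}$ together with $\mathrm{Ind}^{\rM}_{\rM'}$ (or, more precisely, the fact that $\rM/\rM'$ is abelian and that every irreducible $k$-representation of $\rM'$ is a subrepresentation of the restriction of an irreducible of $\rM$) matches supercuspidal supports on the two sides: the supercuspidal support of an irreducible subquotient $\pi'$ of $\Pi\in\mathrm{Rep}_k(\rM')$ lies in $[\rL',\tau']_{\rM}$ precisely when $\pi'$ occurs in the restriction of some irreducible of $\rM$ with support in $[\rL,\tau]_{\rM}$, where $[\rL',\tau']_{\rM}=\cup_{\gamma'\in\mathcal{I}(\tau)}[\rL',\gamma']_{\rM'}$ as defined before the statement. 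This is where one uses that $\mathcal{SC}_{\rM'}^{\rM}$ is exactly the set of fibres of the natural map $\mathcal{SC}_{\rM'}\to\mathcal{SC}_{\rM}$ induced by restriction of supercuspidal pairs, which should be recorded in Section \ref{section 002}.

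Next I would define, for each $\mathfrak{t}=[\rL',\tau']_{\rM}\in\mathcal{SC}_{\rM'}^{\rM}$, the full subcategory $\mathrm{Rep}_k(\rM')_{\mathfrak{t}}$ by the condition stated in the theorem (all irreducible subquotients have supercuspidal support in $\mathfrak{t}$), and verify the three properties required of a category decomposition. Orthogonality ($\mathrm{Hom}_{\rM'}(\Pi_1,\Pi_2)=0$ for $\Pi_i$ in distinct subcategories) follows because a nonzero morphism would produce a common irreducible subquotient, forcing the two supports to coincide as $\rM$-inertial classes. The decomposition of an arbitrary $\Pi$ is the heart of the argument: given $\Pi\in\mathrm{Rep}_k(\rM')$, I would embed it into $\mathrm{Res}^{\rM}_{\rM'}\mathrm{Ind}^{\rM}_{\rM'}\Pi$ (using that $\rM/\rM'$ is finite modulo centre, so that $\mathrm{Ind}$ of a smooth representation is well-behaved and the unit of adjunction is injective), apply the known decomposition $\mathrm{Ind}^{\rM}_{\rM'}\Pi=\bigoplus_{\mathfrak{s}\in\mathcal{SC}_{\rM}}(\mathrm{Ind}^{\rM}_{\rM'}\Pi)_{\mathfrak{s}}$, restrict back down, and check that the restriction of the $\mathfrak{s}$-component lands in $\mathrm{Rep}_k(\rM')_{\mathfrak{t}}$ where $\mathfrak{t}$ is the $\rM'$-class mapping to $\mathfrak{s}$; then $\Pi$ inherits the resulting idempotent decomposition as a subrepresentation, and one checks the pieces are independent of the auxiliary choice. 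Non-splitness of the individual pieces is not claimed here (that is addressed separately via the block decomposition of the supercuspidal subcategory and the counter-example), so at this stage only the decomposition and orthogonality need to be established.

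The main obstacle I anticipate is the compatibility of the $\rM$-side Bernstein-type decomposition with restriction to $\rM'$: one must ensure that passing to $\rM/\rM'$-orbits does not mix distinct $\rM'$-classes in an uncontrolled way, and conversely that two $\rM'$-classes lying in the same $\rM$-class genuinely belong together, i.e. cannot be separated by a coarser idempotent. Concretely, the delicate point is that the idempotents cutting out $\mathrm{Rep}_k(\rM)_{\mathfrak{s}}$ need not, a priori, commute with the $\rM/\rM'$-action used in the induction step; I would handle this by averaging over $\rM/\rM'$ (legitimate since $k$ has characteristic $\ell\neq p$ and $\rM/\rM'$ is an $\ell'$-group up to the centre in the relevant cases, or by a direct argument on supercuspidal supports that avoids idempotents entirely) and by invoking the uniqueness of supercuspidal support for $\mathrm{SL}_n(F)$ from \cite{C2} to guarantee that the subcategory defined by the support condition is genuinely a direct factor. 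Everything else—exactness, closure under subquotients and extensions—is formal once the support bookkeeping is in place.
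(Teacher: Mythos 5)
The central step of your argument has a genuine gap. You embed $\Pi$ into $\Res_{\rM'}^{\rM}\ind_{\rM'}^{\rM}\Pi$, decompose the induced representation by the known Bernstein-type decomposition of $\mathrm{Rep}_k(\rM)$, and then assert that ``$\Pi$ inherits the resulting idempotent decomposition as a subrepresentation.'' A subobject of a direct sum $\bigoplus_{\mathfrak{t}}V_{\mathfrak{t}}$, where each $V_{\mathfrak{t}}$ has all irreducible subquotients of type $\mathfrak{t}$, does \emph{not} in general decompose accordingly: projecting to the factors only exhibits $\Pi$ as an extension of an object with subquotients of one type by an object with subquotients of the other types, and splitting that extension (equivalently, the vanishing of the relevant $\mathrm{Ext}^1$, or closure of the support condition under extensions) is precisely the content of the theorem, not a formal consequence of the embedding. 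The paper supplies exactly this missing mechanism: it builds, from projective covers of maximal simple $k$-types, the projective objects $\cP_{[\rM',\pi']}$, proves that every irreducible with the given cuspidal support is a quotient of them (Proposition \ref{prop 008}), controls the supports of all subquotients of their duals via $\ind_{\rG'}^{\rG}$ and the $\GL$-side block decomposition (Lemmas \ref{lemma 003}, \ref{lemma 005}, \ref{lemma 006}, Proposition \ref{prop 007}), checks injectivity of $\mathcal{I}_{\overline{(\rL,\tau)}}$, and only then obtains the splitting from Proposition \ref{prop 0011}. Your proposal constructs no injective or projective generators for the would-be factors and offers no substitute for this step.

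Your fallback ideas do not close the gap. Averaging over $\rM/\rM'$ is not available: this quotient is isomorphic to $F^{\times}$ (via determinants), so it is neither finite nor, even modulo the centre, an $\ell'$-group in general (e.g.\ $\ell\mid q-1$ and $\ell\mid n$), and $\rM'$ is not even open in $\rM$, so the Mackey/adjunction formalism you invoke needs care. Uniqueness of supercuspidal support from \cite{C2} only guarantees that each irreducible is assigned to a well-defined class in $\mathcal{SC}_{\rM'}^{\rM}$; it does not make the full subcategory cut out by a support condition a direct factor (the subtlety of such splitting questions is exactly what Section \ref{section 5} of the paper illustrates). Finally, a bookkeeping point: restricting the $\rM$-decomposition naturally groups the blocks $\mathrm{Rep}_k(\rM)_{[\rL,\tau\otimes\chi]}$ over \emph{all} quasicharacters $\chi$, ramified ones included, i.e.\ over $\rG$-twist classes $[\rL,\tau]^{tw}$ rather than single $\rM$-inertial classes; one then needs the translation of Remark \ref{rem 0031} to land on $\mathcal{SC}_{\rM'}^{\rM}$. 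This is repairable, but the splitting step above is the essential missing idea.
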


The above theorem gives a category decomposition
$$\mathrm{Rep}_k(\rM')\cong\mathrm{Rep}_k(\rM')_{\mathcal{SC}}\times\mathrm{Rep}_k(\rM')_{non-\mathcal{SC}},$$
where a $k$-representation $\Pi$ of $\rM'$ belongs to $\mathrm{Rep}_k(\rM')_{\mathcal{SC}}$ (resp. $\mathrm{Rep}_k(\rM')_{non-\mathcal{SC}}$) if each (resp. non) of its irreducible sub-quotients is supercuspidal. We call $\mathrm{Rep}_k(\rM')_{\mathcal{SC}}$ \textbf{the supercuspidal sub-category} of $\mathrm{Rep}_k(\rM')$. In Section \ref{section 5}, we establish a block decomposition of $\mathrm{Rep}_k(\rM')_{\mathcal{SC}}$.

Let  $\pi$ be an irreducible supercuspidal $k$-representation of $\rM$, and let $\mathcal{I}(\pi)$ be the set of isomorphic classes of irreducible components of $\pi\vert_{\rM'}$.  In Section \ref{section 5}, we introduce an equivalence relation $\sim$ on $\mathcal{I}(\pi)$.  For $\pi'\in\mathcal{I}(\pi)$, an irreducible supercuspidal $k$-representation of $\rM'$, let $(\pi',\sim)$ be the connected component of $\mathcal{I}(\pi)$ containing $\pi'$ under this equivalence relation, and let $[\pi',\sim]$ be the union of  $\rM'$-inertially equivalent supercuspidal classes of $\pi_j'\in(\pi',\sim)$. Denote by $\mathcal{SC}_{\rM',\sim}$ the set of pairs in the form of $[\pi',\sim]$. We establish a block decomposition of $\mathrm{Rep}_k(\rM')_{\mathcal{SC}}$:

\begin{thm}[Theorem \ref{thm 0030}]
\label{thm 222}
There is a block decomposition of $\mathrm{Rep}_k(\rM')_{\mathcal{SC}}$ according to $\mathcal{SC}_{\rM',\sim}$. In particular, let $[\pi',\sim]\in\mathcal{SC}_{\rM',\sim}$, a $k$-representation $\Pi$ belongs to $\mathrm{Rep}_k(\rM')_{[\pi',\sim]}$ if and only if each of its irreducible sub-quotients belongs to $[\pi',\sim]$.
\end{thm}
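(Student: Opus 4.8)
The plan is to use Theorem~\ref{thm 009} to reduce to a single $\rM$-inertial piece of the supercuspidal sub-category, and then to read off its blocks from a finite-dimensional twisted group algebra of a finite abelian group; the gluing of distinct $\rM'$-inertial classes into one block of $\rM'$ will be precisely the defect of semisimplicity of this algebra, which occurs exactly when $\ell$ divides the order of the group.

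First I would note that in $\mathrm{Rep}_k(\rM')_{\mathcal{SC}}$ only the classes $[\rL',\tau']_{\rM}\in\mathcal{SC}_{\rM'}^{\rM}$ with $\rL'=\rM'$ occur, and that every irreducible supercuspidal $k$-representation $\pi'$ of $\rM'$ lies in $\mathcal{I}(\pi)$ for some irreducible supercuspidal $k$-representation $\pi$ of $\rM$: extend $\pi'$ to $\rM'\rZ(\rM)$ (using that $k^\times$ is divisible prime to $\ell$), induce to $\rM$ (a finite-index step, which preserves supercuspidality), and take an irreducible constituent; then $\pi'$ is a component of $\pi|_{\rM'}$, this restriction being semisimple for supercuspidal $\pi$ (cf.\ \cite{C2}). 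Hence the supercuspidal part of the decomposition of Theorem~\ref{thm 009} reads
$$\mathrm{Rep}_k(\rM')_{\mathcal{SC}}\;\cong\;\prod_{[\pi]_{\rM}}\mathrm{Rep}_k(\rM')_{[\pi']_{\rM}},$$
the product over the $\rM$-inertial classes of irreducible supercuspidals of $\rM'$, where $[\pi']_{\rM}$ abbreviates $[\rM',\pi']_{\rM}=\bigcup_{\gamma'\in\mathcal{I}(\pi)}[\rM',\gamma']_{\rM'}$. Since the relation $\sim$ on $\mathcal{I}(\pi)$ is, by its construction in Section~\ref{section 5}, compatible with unramified twisting, the $\sim$-classes $[\pi_c',\sim]$ partition $[\pi']_{\rM}$, and it remains to show that each factor $\mathrm{Rep}_k(\rM')_{[\pi']_{\rM}}$ splits as $\prod_c\mathrm{Rep}_k(\rM')_{[\pi_c',\sim]}$ with non-split pieces.

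Next I would analyse $\mathrm{Rep}_k(\rM')_{[\pi']_{\rM}}$ along the chain $\rM'\subset\rM'\rZ(\rM)\subset\rM$, whose successive quotients are a quotient of the torus $\rZ(\rM)$ and a finite abelian group, so that $\ell$-modular Clifford theory applies, using that $[\rM:\rM'\rZ(\rM)]<\infty$ and that $\rM$, a product of general linear groups, has its supercuspidal inertial classes as blocks \cite{Helm,SeSt}. The outcome should be an identification of the endomorphism algebra of a progenerator of $\mathrm{Rep}_k(\rM')_{[\pi']_{\rM}}$ (equivalently, of a Hecke algebra $\cH(\rM',\lambda')$ attached to a type) with the tensor product of a Laurent polynomial ring $R_\pi$, recording the unramified-twist parameters inherited from the single block of $\rM$ in the class $[\pi]_{\rM}$, and a twisted group algebra $k^{\mu_\pi}[A_\pi]$, where $A_\pi$ is the finite abelian group governing the decomposition of $\pi|_{\rM'}$ and $\mu_\pi$ a $2$-cocycle, with $\mathcal{I}(\pi)$ in bijection with the simple $k^{\mu_\pi}[A_\pi]$-modules. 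The ring $R_\pi$ being connected, the primitive central idempotents of $R_\pi\otimes_k k^{\mu_\pi}[A_\pi]$ are those of $k^{\mu_\pi}[A_\pi]$; the relation $\sim$ being, by its definition in Section~\ref{section 5}, precisely the partition of $\mathcal{I}(\pi)$ induced by the block idempotents of $k^{\mu_\pi}[A_\pi]$, transporting these idempotents through the equivalence gives the category decomposition $\prod_c\mathrm{Rep}_k(\rM')_{[\pi_c',\sim]}$ (the $\mathrm{Hom}$'s across factors vanishing since the idempotents are central), and the description by sub-quotients is inherited from Theorem~\ref{thm 009}. Finally a block of $k^{\mu_\pi}[A_\pi]$ has no non-trivial central idempotent and keeps this property after tensoring with $R_\pi$ (reduce modulo the radical), so each $\mathrm{Rep}_k(\rM')_{[\pi_c',\sim]}$ is non-split, and the decomposition is a block decomposition.

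I expect the third step to be the main obstacle: pinning down the shape of the endomorphism/Hecke algebra of $\mathrm{Rep}_k(\rM')_{[\pi']_{\rM}}$, in particular identifying the cocycle $\mu_\pi$ and showing that the unramified-twist directions genuinely decouple (so that they contribute no idempotents and merge no $\sim$-classes), together with the descent from this ring-theoretic idempotent decomposition to a decomposition of the whole category of smooth $k$-representations, which relies on the finiteness already available through Theorem~\ref{thm 009}. All the difference with the case $\ell=0$ is concentrated in $k^{\mu_\pi}[A_\pi]$: when $\ell\nmid\#A_\pi$ it is semisimple, $\sim$ is trivial and each component of $\pi|_{\rM'}$ is its own block as for $\ell=0$; when $\ell\mid\#A_\pi$ it is not, and genuine gluing of inertially inequivalent supercuspidal classes occurs, which is exactly the phenomenon Section~\ref{section 5} makes explicit.
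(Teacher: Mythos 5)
Your reduction to the supercuspidal factors of Theorem \ref{thm 009} is fine, and you correctly identify the third step as the crux, but the model you propose for that step is not correct, and the error is exactly at the point where the theorem has content. You identify the endomorphism algebra of a progenerator of $\mathrm{Rep}_k(\rM')_{[\pi']_{\rM}}$ with $R_\pi\otimes_k k^{\mu_\pi}[A_\pi]$, where $A_\pi$ is the component group governing $\pi\vert_{\rM'}$, and you assert that the relation $\sim$ of Section \ref{section 5} is ``by definition'' the linkage of $\cI(\pi)$ induced by the block idempotents of $k^{\mu_\pi}[A_\pi]$. That is not the definition in the paper: $\sim$ is generated by declaring $\gamma'\leftrightarrow\tau'$ when $\gamma'$ occurs as an irreducible subquotient of the projective cover $\cP_{\tau'}$ of a maximal simple supercuspidal $k$-type $(\tilde{J}_{\rM}',\tau')$ (Definition \ref{defn 0025}, via Lemma \ref{lem 0012}). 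This is a phenomenon of the modular representation theory of the finite quotient $J_{\rM}'/J_{\rM}^{1\prime}$, and it is invisible to $A_\pi$. The paper's own Example \ref{ex 0029} refutes your criterion: for $\rG'=\mathrm{SL}_2(\bQ_5)$, $\ell=3$, the number of components of $\pi\vert_{\rG'}$ divides $4$, so $\ell\nmid\#A_\pi$ and $k^{\mu_\pi}[A_\pi]$ is semisimple; your conclusion would be that $\sim$ is trivial and each component is its own block, whereas the gluing of the two non-inertially-equivalent supercuspidals $\Pi_1,\Pi_2$ happens because the projective cover of the cuspidal type $\bar{\pi}_{2,1}$ of $\mathrm{SL}_2(\F_5)$ in characteristic $3$ contains $\bar{\pi}_{2,2}$ as a subquotient. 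In short, the unit you must analyse is the compact induction of the projective cover of the type, not of the type itself; mod $\ell$ the type is not projective over $\tilde{J}_{\rM}'$, and its cover carries other types, which is precisely where the non-trivial linkage lives.

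Because of this, the two substantive assertions you leave to the twisted group algebra have to be proved by other means, and this is what the paper does: non-splitness of $\mathrm{Rep}_k(\rM')_{[\pi_j',\sim]}$ is Proposition \ref{prop 0017}, which shows that $\ind_{\tilde{J}_{\rM}'}^{\rM'}\cP_{\tilde{\lambda}_{\rM}'}$ cannot be split along any non-trivial partition of $\mathcal{SC}(\tilde{\lambda}_{\rM}')$ --- the point being that such a splitting would force the cosocle of the projective cover $\cP_{\tilde{\lambda}_{\rM}'}$ to contain a second irreducible quotient, contradicting the defining property of a projective cover; and the product decomposition over the $\sim$-classes is obtained from the projective-object criterion of Remark \ref{rem 0029}, applied to the explicit objects $\cP_{[\pi_{j_0}',\sim]}$ and $\cP_{[\pi_{j_0}',\sim]^{\perp}}$ built from inductions of projective covers of the relevant types, with the sub-quotient control coming from Proposition \ref{rem 0016}, Remark \ref{rem 0023} and Lemma \ref{lem 0018}. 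A Clifford-theoretic or Hecke-algebra argument along the chain $\rM'\subset\rM'\rZ(\rM)\subset\rM$ could conceivably be made to work, but only if the ``finite part'' of the algebra records the full projective covers of the types (hence the finite-group blocks of $J_{\rM}'/J_{\rM}^{1\prime}$), not merely the component group of $\pi\vert_{\rM'}$; as written, your argument would prove a statement that the paper's Example \ref{ex 0029} shows to be false.
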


This article ends with Example \ref{ex 0029} of a $k$-representation in the supercuspidal sub-category of $\mathrm{Rep}_k(\mathrm{SL}_2(F))$ when $\ell=3$. In this example, we construct a finite length projective $k$-representation of $\mathrm{SL}_2(F)$ which is induced from a projective cover of a maximal simple supercuspidal $k$-type of depth zero. By using the theory of $k$-representations of finite $\mathrm{SL}_n$ group, we compute the irreducible sub-quotients of this projective cover, and we show that there exists two different supercuspidal $k$-representations of $\mathrm{SL}_2(F)$, which are not inertially equivalent, such that they belong to a same block. Or equivalently, this example shows that there exists an irreducible supercuspidal $k$-representation $\pi'$ of $\mathrm{SL}_2(F)$, such that $[\pi',\sim]$ is not a unique $\mathrm{SL}_2(F)$-inertially equivalent supercuspidal class, hence the equivalence relation defined on $\mathcal{I}(\pi)$ is non-trivial in general. This example shows that a block decomposition of $\mathrm{Rep}_k(\rG')$ (resp. $\mathrm{Rep}_k(\rM')$) according to $\rG'$-inertially equivalent supercuspidal classes $\mathcal{SC}_{\rG'}$ (resp. $\mathrm{Rep}_k(\rM')$-inertially equivalent supercuspidal classes $\mathcal{SC}_{\rM'}$) is not always possible in general.

\subsection{Structure of this paper}
The author is inspired by the method in \cite{Helm}. We use the theory of maximal simple $k$-types, which has been firstly established for $\bC$-representations of $\mathrm{GL}_n(F)$ in \cite{BuKuI} and generalised by the author to the cuspidal $k$-representations of $\rM'$ a Levi subgroup of $\mathrm{SL}_n(F)$ in \cite{C1}. In this article, we construct a family of projective objects defined from the projective cover of maximal simple $k$-types. In Section \ref{section 3}, we show that the projective cover of a maximal simple $k$-type of $\rM'$ is an indecomposable direct summand of the restriction of the projective cover of a maximal simple $k$-type of $\rM$. We apply the compact induction functor $\ind_{\rM'}^{\rM}$ to these projective objects and show their decomposition under the block decomposition of $\mathrm{Rep}_k(\rM)$. The above two parts leads to a family of injective objects verifying the conditions stated in Proposition \ref{prop 0011}, which gives the category decomposition in Theorem \ref{thm 111}.

Section \ref{section 5} concentrates on the supercuspidal sub-category of $\mathrm{Rep}_k(\rM')$, where $\rM'$ is a Levi subgroup of $\mathrm{SL}_n(F)$. We introduce an equivalence relation generated by putting all the irreducible sub-quotients of the projective cover of a maximal simple supercuspidal $k$-type of $\rM'$ into a same equivalent class. Let $\pi$ be an irreducible supercuspidal $k$-representation of $\rM$, the above equivalence relation on maximal simple supercuspidal $k$-types induces an equivalence relation on $\mathcal{I}(\pi)$, which is the equivalence relation $\sim$ needed in Theorem \ref{thm 222}.


There is a natural conjecture that a block decomposition of $\mathrm{Rep}_k(\rG')$ can be given according to the set of $\rG'$-conjugacy classes of elements in $\mathcal{SC}_{\rM',\sim}$ for all Levi subgroup $\rM'$, which involves the study of projective cover of maximal simple $k$-types (non-supercuspidal) of $\rM'$ and leads to a study of semisimple $k$-types of $\rG'$.

\section{Preliminary}
\subsection{Notations}
\label{section 002}
Let $F$ be a non-archimedean local field with residual characteristic equal to $p$. 

\begin{itemize}
\item $\mathfrak{o}_F$: the ring of integers of $F$, and $\mathfrak{p}_F$: the unique maximal ideal of $\mathfrak{o}_F$.
\item $k$: an algebraically closed field with characteristic $\ell\neq p$.
\item Let $K$ be a closed subgroup of a $p$-adic group $\rG$, then $\ind_{K}^{\rG}$: compact induction, $\Ind_{K}^{\rG}$: induction, $\res_{K}^{\rG}$: restriction.
\item Fix a split maximal torus of $\rG$, and $\rM$ be a Levi subgroup, then $i_{\rM}^{\rG}, r_{\rM}^{\rG}$: normalised Parabolic induction and normalised Parabolic restriction.
\item Denote by $\delta_{\rG}$ the module character of $\rG$.
\end{itemize}

In this article, without specified we always denote by $\rG$ the group of $F$-rational points of $\mathrm{GL}_n$ and by $\rG'$ the group of $F$-rational points of $\mathrm{SL}_n$. Let $\iota$ be the canonical embedding from $\rG'$ to $\rG$, which induces an isomorphism between the Weyl group of $\rG'$ and $\rG$, hence gives a bijection from the set of Levi subgroups of $\rG'$ to those of $\rG$. In particular, suppose $\rM$ is a Levi subgroup of $\rG$, we always denote by $\rM'$ the Levi subgroup $\rM\cap\rG'$ of $\rG'$. We say an irreducible $k$-representation $\pi$ of a $p$-adic group $\rG$ is \textbf{cuspidal}, if $r_{\rM}^{\rG}\pi$ is zero for every proper Levi subgroup $\rM$; we say $\pi$ is \textbf{supercuspidal} if it does not appear as a sub-quotient of $i_{\rM}^{\rG}\tau$ for each proper Levi subgroup $\rM$ and its irreducible representation $\tau$.

Let $\pi$ be an irreducible $k$-representation of $\rG$, its restriction $\pi\vert_{\rG'}$ is semisimple with finite length, and each irreducible $k$-representation $\pi'$ of $\rG'$ appears as a direct component of $\pi\vert_{\rG'}$. A pair $(\rM,\tau)$ is called a \textbf{cuspidal} (resp. \textbf{supercuspidal}) \textbf{pair} if $\rM$ is a Levi subgroup and $\tau$ is an irreducible cuspidal (resp. supercuspidal) of $\rM$. Let $(\rM_1',\tau_1'), (\rM_2',\tau_2')$ be two cuspidal pairs of $\rG'$ and $K$ be a subgroup of $\rG$, we say they are $K$-\textbf{inertially equivalent}, if there exists an element $g\in K$ such that $g(\rM_1')=\rM_2'$ and there exists an unramified $k$-quasicharacter $\theta$ of $F^{\times}$ such that $g(\tau_1')\cong\tau_2'\otimes\theta$. Denote by $[\rM',\tau']_{K}$ the $K$-inertially equivalent class defined by $(\rM',\tau')$, and we call it a $K$-inertially equivalent supercuspidal (resp. cuspidal) class if $(\rM',\tau')$ is a supercuspidal (resp. cuspidal) pair. A same definition of $[\rM,\tau]_{\rG}$ is applied for cuspidal pairs of $\rG$. We always abbreviate $[\rM',\tau']_{\rG'}$ as $[\rM',\tau']$, and abbreviate $[\rM,\tau]_{\rG}$ as $[\rM,\tau]$.   

We say that a cuspidal (resp. supercuspidal) pair $(\rM,\tau)$ belongs to the \textbf{cuspidal} (resp. \textbf{supercuspidal}) \textbf{support} of $\pi$, if $\pi$ appears as a sub-representation or a quotient-representation (resp. sub-quotient representation) of $i_{\rM}^{\rG}\tau$. When $\pi$ is an irreducible $k$-representation of $\rG$ (resp. $\rG'$), its supercuspidal support as well as its cuspidal support is \textbf{unique} up to $\rG$(resp. $\rG'$)-conjugation.

To decompose $\mathrm{Rep}_k(\rG')$ as a direct product of a familly of full-subcategories, we constructs a familly of injective objects and follow the method as below, which is the same strategy as in \cite[Proposition 2.4]{Helm}. We state it here for convenient reason.
\begin{prop}
\label{prop 0011}
Let $\mathcal{I}_1,\mathcal{I}_2$ be two injective objects in $\mathrm{Rep}_k(\rG')$, and denote by $\mathcal{S}_1,\mathcal{S}_2$ the sets of irreducible $k$-representations of $\rG'$ which appears as a sub-quotient of $\mathcal{I}_1$ and $\mathcal{I}_2$ respectively. Suppose the following conditions are verified:
\begin{itemize}
\item an object in $\mathcal{S}_1$ can be embedded into $\mathcal{I}_1$;
\item an object in $\mathcal{S}_1$ does not belong to $\mathcal{S}_2$ up to isomorphism;
\item an irreducible $k$-representation of $\rG'$, which does not belong to $\mathcal{S}_1$ up to isomorphism, can be embedded into $\mathcal{I}_2$.
\end{itemize}
Then $\mathrm{Rep}_k(\rG')$ can be decomposed as a direct product of two full subcategories $\mathrm{R}_1$ and $\mathrm{R}_2$, such that
\begin{itemize}
\item every object $\Pi\in\mathrm{Rep}_k(\rG')$ is isomorphic to a direct sum $\pi_1\oplus\pi_2$, where each irreducible sub-quotient of $\pi_1$ belongs to $\mathcal{S}_1$ and each irreducible sub-quotient of $\pi_2$ belongs to $\mathcal{S}_2$;
\item every object in $\mathrm{R}_1$ has an injective resolution by direct sums of copies of $\mathcal{I}_1$, and every object in $\mathrm{R}_2$ has an injective resolution by direct sums of copies of $\mathcal{I}_2$ (copies means direct product by itself).
\end{itemize}
\end{prop}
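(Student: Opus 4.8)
The plan is to take for $\mathrm{R}_1$ (resp. $\mathrm{R}_2$) the full subcategory of $\mathrm{Rep}_k(\rG')$ of representations all of whose irreducible subquotients lie in $\mathcal{S}_1$ (resp. $\mathcal{S}_2$), and to prove directly that these furnish a product decomposition $\mathrm{Rep}_k(\rG')=\mathrm{R}_1\times\mathrm{R}_2$ with the stated properties. First I would record the formal consequences of the hypotheses. By the third condition every irreducible $k$-representation of $\rG'$ that does not lie in $\mathcal{S}_1$ embeds into $\mathcal{I}_2$, hence lies in $\mathcal{S}_2$, while by the second condition $\mathcal{S}_1\cap\mathcal{S}_2=\emptyset$; thus $\mathcal{S}_1,\mathcal{S}_2$ partition the irreducible $k$-representations of $\rG'$, and every object of $\mathcal{S}_i$ embeds into $\mathcal{I}_i$. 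One checks that $\mathrm{R}_1$ and $\mathrm{R}_2$ are Serre subcategories closed under arbitrary direct sums — using that an irreducible subquotient of a short exact sequence is a subquotient of one of its two ends, and that vectors in a direct sum have finite support — that $\mathcal{I}_i\in\mathrm{R}_i$ by the very definition of $\mathcal{S}_i$, and that $\Hom_{\rG'}(\Pi_1,\Pi_2)=\Hom_{\rG'}(\Pi_2,\Pi_1)=0$ for $\Pi_i\in\mathrm{R}_i$: the image of such a morphism is at once a quotient of an object of $\mathrm{R}_1$ and a subobject of an object of $\mathrm{R}_2$, so its irreducible subquotients lie in $\mathcal{S}_1\cap\mathcal{S}_2=\emptyset$ and it vanishes. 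In particular $\mathrm{R}_1\cap\mathrm{R}_2=0$.

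Next I would verify that $\mathcal{I}_1\oplus\mathcal{I}_2$ is an injective cogenerator of $\mathrm{Rep}_k(\rG')$. It is injective, being a finite direct sum of injectives. For the cogenerating property, let $V\neq 0$: a nonzero vector generates a finitely generated, hence (by Zorn's lemma) a cyclic subrepresentation admitting a simple quotient $\sigma$; since $\sigma$ embeds into $\mathcal{I}_1$ or into $\mathcal{I}_2$, composing the surjection onto $\sigma$ with such an embedding and extending along $V$ by injectivity gives a nonzero morphism $V\to\mathcal{I}_1\oplus\mathcal{I}_2$. Consequently every object of $\mathrm{Rep}_k(\rG')$ embeds into a product of copies of $\mathcal{I}_1\oplus\mathcal{I}_2$, i.e. into $\mathcal{I}_1^{\kappa}\oplus\mathcal{I}_2^{\kappa}$ for a suitable cardinal $\kappa$, where $\mathcal{I}_i^{\kappa}$ denotes the smooth representation underlying the $\kappa$-fold product.

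The technical core, and the step I expect to be the main obstacle, is the claim that $\mathcal{I}_i^{\kappa}\in\mathrm{R}_i$ for every cardinal $\kappa$ — equivalently, that every irreducible subquotient of a product of copies of $\mathcal{I}_i$ still lies in $\mathcal{S}_i$. This is where the hypotheses must be used in an essential way, arbitrary products not being controlled by finite data. I would argue that such an irreducible subquotient is an irreducible quotient $\sigma$ of a cyclic subrepresentation $C\subseteq\mathcal{I}_1^{\kappa}$; if $\sigma\in\mathcal{S}_2$, then by the third condition the composite $C\twoheadrightarrow\sigma\hookrightarrow\mathcal{I}_2$ extends along $C\subseteq\mathcal{I}_1^{\kappa}$ to a nonzero morphism $\mathcal{I}_1^{\kappa}\to\mathcal{I}_2$, so it suffices to show $\Hom_{\rG'}(\mathcal{I}_1^{\kappa},\mathcal{I}_2)=0$. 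Taking invariants under a pro-$p$ open compact subgroup $J$ — for which $(-)^J$ is exact since $\ell\neq p$, and $(\mathcal{I}_1^{\kappa})^J=(\mathcal{I}_1^{J})^{\kappa}$ — turns this into the assertion that there is no nonzero $\cH(\rG',J)$-module map from $(\mathcal{I}_1^J)^{\kappa}$ to $\mathcal{I}_2^J$, two modules whose sets of irreducible subquotients are disjoint; this is precisely the point at which I would invoke the argument of \cite[Proposition 2.4]{Helm}. Granting it, each $\mathcal{I}_i^{\kappa}$ is an injective object lying in $\mathrm{R}_i$, hence injective in $\mathrm{R}_i$ as well; and for an arbitrary injective $E$, an embedding $E\hookrightarrow\mathcal{I}_1^{\kappa}\oplus\mathcal{I}_2^{\kappa}$ splits, so, writing $t_i$ for the largest-$\mathrm{R}_i$-subobject functor (which commutes with finite direct sums and satisfies $t_i(\mathcal{I}_1^{\kappa}\oplus\mathcal{I}_2^{\kappa})=\mathcal{I}_i^{\kappa}$), a short argument forces $E=t_1(E)\oplus t_2(E)$ with each $t_i(E)$ injective and in $\mathrm{R}_i$.

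Finally, given any $\Pi\in\mathrm{Rep}_k(\rG')$, embed it in an injective $E=E_1\oplus E_2$ with $E_i\in\mathrm{R}_i$ injective and set $\Pi_i:=\Pi\cap E_i$. A short diagram chase shows that $\Pi/(\Pi_1\oplus\Pi_2)$ embeds into $(E_1/\Pi_1)\oplus(E_2/\Pi_2)$ while meeting each summand trivially, hence injects separately into $E_1/\Pi_1\in\mathrm{R}_1$ and into $E_2/\Pi_2\in\mathrm{R}_2$, so $\Pi/(\Pi_1\oplus\Pi_2)\in\mathrm{R}_1\cap\mathrm{R}_2=0$ and $\Pi=\Pi_1\oplus\Pi_2$ with $\Pi_i\in\mathrm{R}_i$. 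Combined with the $\Hom$-vanishing, this gives the decomposition $\mathrm{Rep}_k(\rG')=\mathrm{R}_1\times\mathrm{R}_2$ and the first bulleted conclusion. For the second, $\mathcal{I}_i$ is a cogenerator of $\mathrm{R}_i$, so every $Y\in\mathrm{R}_i$ embeds into some $\mathcal{I}_i^{\kappa_0}$ with cokernel again in $\mathrm{R}_i$ since $\mathrm{R}_i$ is Serre; iterating yields an injective resolution of $Y$ by products of copies of $\mathcal{I}_i$.
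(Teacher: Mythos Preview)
The paper does not prove this proposition: it merely restates \cite[Proposition~2.4]{Helm} for the reader's convenience and refers all details there, so there is no independent argument in the paper to compare against. Your outline is the standard one and, at the only genuinely nontrivial step---showing that the smooth product $\mathcal{I}_i^{\kappa}$ still lies in $\mathrm{R}_i$, equivalently that $\Hom_{\rG'}(\mathcal{I}_1^{\kappa},\mathcal{I}_2)=0$---you likewise invoke \cite[Proposition~2.4]{Helm}, so your treatment and the paper's coincide.

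One wording issue at that step: describing $(\mathcal{I}_1^{J})^{\kappa}$ and $\mathcal{I}_2^{J}$ as ``two modules whose sets of irreducible subquotients are disjoint'' is circular as written, since controlling the simple subquotients of the \emph{infinite product} $(\mathcal{I}_1^{J})^{\kappa}$ is exactly what is at stake. What you mean is that $\mathcal{I}_1^{J}$ and $\mathcal{I}_2^{J}$ themselves have disjoint simple subquotients (this follows from $\mathcal{S}_1\cap\mathcal{S}_2=\emptyset$ together with exactness of $(-)^{J}$ for $J$ pro-$p$), and the remaining work---which you are outsourcing to Helm---is to propagate this to arbitrary products. With that clarification your sketch is sound.
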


\begin{rem}[Projective version]
\label{rem 0029}
Let $\mathcal{P}_1,\mathcal{P}_2$ be two projective objects in $\mathrm{Rep}_k(\rG')$, and denote by $\mathcal{S}_1,\mathcal{S}_2$ the sets of irreducible $k$-representations of $\rG'$ which appears as a sub-quotient of $\mathcal{P}_1$ and $\mathcal{P}_2$ respectively. Suppose the following conditions are verified:
\begin{itemize}
\item an object in $\mathcal{S}_1$ is a quotient of $\mathcal{P}_1$;
\item an object in $\mathcal{S}_1$ does not belong to $\mathcal{S}_2$ up to isomorphism;
\item an irreducible $k$-representation of $\rG'$, which does not belong to $\mathcal{S}_1$ up to isomorphism, can be realised as a quotient of $\mathcal{P}_2$.
\end{itemize}
Then $\mathrm{Rep}_k(\rG')$ can be decomposed as a direct product of two full subcategories $\mathrm{R}_1$ and $\mathrm{R}_2$, such that every object $\Pi\in\mathrm{Rep}_k(\rG')$ is isomorphic to a direct sum $\pi_1\oplus\pi_2$, where each irreducible sub-quotient of $\pi_1$ belongs to $\mathcal{S}$ and each irreducible sub-quotient of $\pi_2$ belongs to $\mathcal{S}_2$.
\end{rem}

The proof of Remark \ref{rem 0029} is in the same manner as in Proposition 2.4 of \cite{Helm} by changing injective objects to projective objects as suggested in Remark 2.5 of \cite{Helm}.

\subsection{Maximal simple $k$-types of $\rM'$}
\label{section 1.0}
In this section, we recall notations and definitions in the theory of maximal simple $k$-types of Levi subgroups $\rM'$ of $\rG'$ which has been studies in \cite{C1}. It requires the theory of maximal simple $k$-types of $\rG$ which has been established in \cite{BuKuII} for complex case. The later is related to modulo $\ell$ maximal simple types in \cite[\S III]{V1} by considering the reduction modulo $\ell$, while \cite{MS} gives a more intrinsic description. We state some useful properties which will be needed for the further use.

A \textbf{maximal simple $k$-type} of $\rG$ is a pair $(J,\lambda)$, where $J$ is an open compact subgroup of $\rG$ and $\lambda$ is an irreducible $k$-representation of $J$. We have a groups inclusion:
$$H^1\subset J^1 \subset J,$$
where $J^1$ is a normal pro-$p$ open subgroup $J^1$ of $J$, such that the quotient $J\slash J^1$ is isomorphic to $\mathrm{GL}_m(\mathbb{F}_q)$, where $\mathbb{F}_q$ is a field extension of the residue field of $F$, and $H^1$ is open. The $k$-representation $\lambda$ is a tensor product $\kappa\otimes\sigma$, where $\kappa$ is irreducible whose restriction to $H^1$ is a multiple of a $k$-character, and $\sigma$ is inflated from a cuspidal $k$-representation of $J\slash J^1$. By \cite[\S III 4.25]{V1} or \cite[Proposition 3.1]{MS}, for an irreducible $k$-cuspidal representation $\pi$ of $\rG$, there exists a maximal simple $k$-type $(J,\lambda)$, and compact modulo centre subgroup $K$ and an irreducible representation $\Lambda$ of $K$, where  $J$ is the unique largest compact open subgroup of $K$ and $\Lambda$ is an extension of $\lambda$, such that $\pi\cong\ind_{K}^{\rG}\Lambda$. Since a Levi subgroup of $\rG$ is a tensor product of $\mathrm{GL}$-groups in lower rank, so we can define maximal simple $k$-types $(J_{\rM},\lambda_{\rM})$ and obtain the same property for cuspidal $k$-representations of $\rM$ as above.

For the reason that a Levi subgroups $\rM'$ of $\rG'$ is not a product of $\mathrm{SL}$-groups of lower rank, so it is not sufficient to consider only the maximal simple $k$-types of $\rG'$. Let $(J_{\rM},\lambda_{\rM})$ be a maximal simple $k$-type of $\rM$, the group of projective normaliser $\tilde{J}_{\rM}$ contains $J_{\rM}$ as a normal subgroup, which is defined in \cite[2.15]{C1} and \cite[2.2]{BuKuII}. In particular, for any $g\in\tilde{J}_{\rM}$, the conjugate $g(\lambda_{\rM})\cong\lambda_{\rM}\otimes\chi$, where $\chi$ is a $k$-quasicharacter of $F^{\times}$. As in \cite[2.48]{C1}, a \textbf{maximal simple $k$-type} of $\rM'$ is a pair in the form of $(\tilde{J}_{\rM}',\tilde{\lambda}_{\rM}')$, where $\tilde{\lambda}_{\rM}'$ is an irreducible direct component of $(\ind_{J_{\rM}}^{\tilde{J}_{\rM}}\lambda_{\rM})\vert_{\tilde{J}_{\rM}'}$, and we denote by $\tilde{\lambda}_{\rM}$ as $\ind_{J_{\rM}}^{\tilde{J}_{\rM}}\lambda_{\rM}$, which is irreducible as proved in \cite[Theorem 2.47]{C1}. For any irreducible cuspidal $k$-representation $\pi'$ of $\rM'$, there exists an irreducible cuspidal $k$-representation $\pi$ of $\rM$ such that $\pi'$ is a direct component of $\pi\vert_{\rM'}$. Let $(J_{\rM},\lambda_{\rM})$ be a maximal simple $k$-types contained in $\pi$, then there exists a maximal simple $k$-type $(\tilde{J}_{\rM},\tilde{\lambda}_{\rM}')$ as well as an open compact modulo centre subgroup $N_{\rM'}(\tilde{\lambda}_{\rM}')$, the normaliser group of $\tilde{\lambda}_{\rM}'$ in $\rM'$, containing $\tilde{J}_{\rM}'$ as its largest open compact subgroup, as well as an extension $\Lambda_{\rM'}$ of $\tilde{\lambda}_{\rM}'$ to $N_{\rM'}(\tilde{\lambda}_{\rM}')$, such that $\pi'\cong\ind_{N_{\rM'}(\tilde{\lambda}_{\rM}')}^{\rM'}\Lambda_{\rM'}$. We call $(N_{\rM'}(\tilde{\lambda}_{\rM}'),\Lambda_{\rM}')$ an \textbf{extended maximal simple $k$-type}.

\begin{prop}[C.]
Let $\pi'$ be an irreducible cuspidal $k$-representation of $\rM'$, there exits a cuspidal $k$-representation $\pi$ of $\rM$, such that $\pi'$ is a direct component of $\pi\vert_{\rM'}$. Then $\pi'$ is supercuspidal if and only if $\pi$ is supercuspidal. When $\pi$ is supercuspidal, we call a $k$-type  $(J_{\rM},\lambda_{\rM})$ (resp. $(\tilde{J}_{\rM}',\tilde{\lambda}_{\rM}'))$ contained in $\pi$ (resp. $\pi'$) a \textbf{maximal simple supercuspidal $k$-type}. 
\end{prop}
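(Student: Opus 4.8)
The existence of a cuspidal $k$-representation $\pi$ of $\rM$ with $\pi'$ a direct component of $\pi\vert_{\rM'}$ is recalled above, so the content of the statement is the equivalence of supercuspidality, and the plan is to deduce it from the compatibility of parabolic induction with restriction along $\iota$. Fix a proper Levi subgroup $\rL\subsetneq\rM$ with parabolic $\rP=\rL U$ of $\rM$, and put $\rL'=\rL\cap\rM'$, $\rP'=\rP\cap\rM'=\rL'U$; since $\rL$ is proper in $\rM$, the bijection on Levi subgroups induced by $\iota$ makes $\rL'$ a proper Levi of $\rM'$ with parabolic $\rP'$. As $\det_{\GL_n}$ is surjective on $\rP$ with kernel meeting $\rM$ exactly in $\rM'$, one has $\rM=\rP\rM'$, so restriction of functions gives a natural isomorphism
$$\res^{\rM}_{\rM'}\, i^{\rM}_{\rL}(\tau)\;\cong\;i^{\rM'}_{\rL'}\bigl(\res^{\rL}_{\rL'}\tau\bigr)$$
for every smooth $k$-representation $\tau$ of $\rL$, the normalising characters matching because $\rP'$ acts on $\mathrm{Lie}(U)$ the way $\rP$ does. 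Beyond this I would use the following facts, all available from \cite{C1}: $\res^{\rG}_{\rG'}$ preserves finite length; the restriction of an irreducible $k$-representation of $\rM$ to $\rM'$ is semisimple of finite length; every irreducible $k$-representation of $\rL'$ is a direct summand of $\res^{\rL}_{\rL'}\rho$ for some irreducible $k$-representation $\rho$ of $\rL$; and two irreducible $k$-representations of $\rM$ admitting a common irreducible subquotient in their restrictions to $\rM'$ differ by a twist by $\eta\circ\det$ with $\eta$ a smooth $k$-character of $F^{\times}$. The last point already makes the statement well posed: the supercuspidality of $\pi$ does not depend on the choice of lift, since $\det$-twists commute with $i^{\rM}_{\rL}$.

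For the implications, first suppose $\pi$ is not supercuspidal, say an irreducible subquotient of $i^{\rM}_{\rL}\tau$ with $\rL\subsetneq\rM$ and $\tau$ irreducible. Applying the exact functor $\res^{\rM}_{\rM'}$ and the displayed isomorphism, every irreducible subquotient of $\pi\vert_{\rM'}$, in particular $\pi'$, is an irreducible subquotient of $i^{\rM'}_{\rL'}(\res^{\rL}_{\rL'}\tau)=\bigoplus_i i^{\rM'}_{\rL'}\tau_i'$, hence of some $i^{\rM'}_{\rL'}\tau_i'$ with $\rL'$ proper; so $\pi'$ is not supercuspidal. Conversely suppose $\pi'$ is an irreducible subquotient of $i^{\rM'}_{\rL'}\tau'$ with $\rL'\subsetneq\rM'$. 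Lift $\tau'$ to an irreducible $\tau$ of $\rL$ having $\tau'$ as a direct summand of $\res^{\rL}_{\rL'}\tau$; then $i^{\rM'}_{\rL'}\tau'$ is a direct summand of $i^{\rM'}_{\rL'}(\res^{\rL}_{\rL'}\tau)\cong\res^{\rM}_{\rM'}i^{\rM}_{\rL}\tau$, so $\pi'$ is an irreducible subquotient of $\res^{\rM}_{\rM'}\rho$ for some irreducible subquotient $\rho$ of the finite-length module $i^{\rM}_{\rL}\tau$; by semisimplicity $\pi'\hookrightarrow\res^{\rM}_{\rM'}\rho$, while also $\pi'\hookrightarrow\res^{\rM}_{\rM'}\pi$, so $\rho\cong\pi\otimes(\eta\circ\det)$ for some $\eta$. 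Untwisting, $\pi$ is an irreducible subquotient of $(i^{\rM}_{\rL}\tau)\otimes(\eta^{-1}\circ\det)=i^{\rM}_{\rL}\bigl(\tau\otimes(\eta^{-1}\circ\det)\bigr)$ with $\rL$ proper, so $\pi$ is not supercuspidal.

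The function-theoretic check of the compatibility isomorphism and the Jordan--H\"older bookkeeping should be routine; the delicate input is the modular Clifford theory over $k$ — that restriction along $\iota$ has finite length, that the lift $\tau'\mapsto\tau$ exists, and, most importantly, that irreducibles of $\rM$ sharing an $\rM'$-constituent are $\det$-twists of one another — where one must allow ramified twisting characters of $F^{\times}$ (harmless, since supercuspidality is twist-invariant). As an alternative in the spirit of the type theory recalled above, one could instead translate supercuspidality on each side into supercuspidality of the cuspidal finite-group representation $\sigma$ occurring in $\lambda_{\rM}=\kappa\otimes\sigma$ and reduce to the analogous statement for restriction from $\GL_m(\F_q)$ to its derived subgroup; I would still expect the parabolic-induction argument to be the cleaner route.
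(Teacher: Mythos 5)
This proposition is not proved in the paper: it is recalled (attributed ``C.'') from the author's earlier work \cite{C1}, \cite{C2}, so there is no internal proof to compare against; I can only judge your argument on its own and against the standard route. Your argument is essentially correct. The compatibility $\res^{\rM}_{\rM'} i^{\rM}_{\rL}\tau\cong i^{\rM'}_{\rL'}(\res^{\rL}_{\rL'}\tau)$ is valid because $\rM=\rP\rM'$ (determinant is surjective on $\rP$) and $\delta_{\rP'}=\delta_{\rP}\vert_{\rP'}$, and both implications then follow as you say, the converse resting on exactly the Clifford-theoretic inputs you list: semisimplicity and finite length of $\res^{\rM}_{\rM'}$ of an irreducible, existence of an irreducible lift of $\tau'$, and the fact that two irreducibles of $\rM$ sharing an $\rM'$-constituent differ by a (possibly ramified) twist $\eta\circ\det$; these are precisely what \cite{C1} provides, and supercuspidality is indeed insensitive to such twists. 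The one step you should make explicit is the Jordan--H\"older bookkeeping in the converse direction: to pass from ``$\pi'$ is a subquotient of $\res^{\rM}_{\rM'} i^{\rM}_{\rL}\tau$'' to ``$\pi'$ is a subquotient of $\res^{\rM}_{\rM'}\rho$ for some irreducible subquotient $\rho$ of $i^{\rM}_{\rL}\tau$'' you need $i^{\rM}_{\rL}\tau$ to have finite length in the modular setting; this is cleanest if you first replace $\tau'$ by a cuspidal member of the cuspidal support of $\pi'$ (its lift $\tau$ is then cuspidal, and finite length of $i^{\rM}_{\rL}\tau$ is available from Vign\'eras), rather than inducing an arbitrary irreducible. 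Note also that the cited works, and this paper itself (see the use of \cite[\S 3.2]{C2} in Example \ref{ex 0029}), often phrase supercuspidality via cuspidality of projective covers and the type theory of Section \ref{section 1.0}, which gives the alternative, more type-theoretic proof you sketch at the end; your parabolic-induction argument is the more elementary and self-contained of the two.
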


Let $K_1,K_2$ be two open subgroups of $\rM'$ and $\rho_1,\rho_2$ two irreducible $k$-representations of $K_1,K_2$ respectively. We say that $\rho_1$ is \textbf{weakly intertwined} with $\rho_2$ in $\rM'$, if there exists an element $m\in\rM'$ such that $\rho_1$ is isomorphic to a sub-quotient of $\ind_{K_1\cap m(K_2)}^{K_1}\res_{K_1\cap m(K_2)}^{m(K_2)} m(\rho_2)$. Recall that:
\begin{prop}[C.]
\begin{enumerate}
\item $\tilde{J}_{\rM}=\tilde{J}_{\rM}'J_{\rM}$;
\item let $(\tilde{J}_{\rM,1}',\tilde{\lambda}_{\rM,1}')$ and $(\tilde{J}_{\rM,2}',\tilde{\lambda}_{\rM,2}')$ be two maximal simple $k$-types of $\rM'$, they are weakly intertwined in $\rM'$ if and only if they are $\rM'$-conjugate.
\end{enumerate}
\end{prop}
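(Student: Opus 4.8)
The plan is to deduce both statements from the theory of maximal simple $k$-types of $\rM$ --- a product of general linear groups, where the relevant rigidity is available from \cite{BuKuII} and \cite[\S 2]{C1} --- together with Clifford theory for the normal inclusion $\rM'\subseteq\rM$. For (1), the inclusion $\tilde{J}_{\rM}'J_{\rM}\subseteq\tilde{J}_{\rM}$ is clear. For the reverse inclusion I would use the homomorphism $\det_{\rM}\colon\rM\to F^{\times}$ given by the product of the block determinants, whose kernel is $\rM'$: an element $g\in\tilde{J}_{\rM}$ lies in $\tilde{J}_{\rM}'J_{\rM}$ exactly when the coset $gJ_{\rM}$ meets $\rM'$, i.e.\ when $\det_{\rM}(g)\in\det_{\rM}(J_{\rM})$, so it suffices to check that $\det_{\rM}(\tilde{J}_{\rM})=\det_{\rM}(J_{\rM})$. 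Since $\tilde{J}_{\rM}$ is compact open with $\tilde{J}_{\rM}/J_{\rM}$ finite abelian, $\det_{\rM}(\tilde{J}_{\rM})$ is a finite extension of $\det_{\rM}(J_{\rM})$ inside $\mathfrak{o}_F^{\times}$, and the equality is a structural property of the projective normaliser read off from its explicit description in \cite[2.15]{C1}, \cite[2.2]{BuKuII}. The form of (1) needed below is the Mackey identity it yields: since $\tilde{J}_{\rM}=\tilde{J}_{\rM}'J_{\rM}$ there is a single $(\tilde{J}_{\rM}',J_{\rM})$-double coset, whence $\tilde\lambda_{\rM}\vert_{\tilde{J}_{\rM}'}\cong\ind_{J_{\rM}\cap\rM'}^{\tilde{J}_{\rM}'}(\lambda_{\rM}\vert_{J_{\rM}\cap\rM'})$.

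For (2), the implication ``$\rM'$-conjugate $\Rightarrow$ weakly intertwined'' is immediate: if $(\tilde{J}_{\rM,2}',\tilde\lambda_{\rM,2}')={}^{m}(\tilde{J}_{\rM,1}',\tilde\lambda_{\rM,1}')$ with $m\in\rM'$, then taking $m^{-1}$ in the definition makes the relevant induced representation equal to $\tilde\lambda_{\rM,1}'$ itself. For the converse, write $(J_{\rM}^{(i)},\lambda_{\rM}^{(i)})$ for the maximal simple $k$-type of $\rM$ underlying $(\tilde{J}_{\rM,i}',\tilde\lambda_{\rM,i}')$, so that $\tilde\lambda_{\rM,i}'$ is a direct summand of $\tilde\lambda_{\rM}^{(i)}\vert_{\tilde{J}_{\rM,i}'}$ with $\tilde\lambda_{\rM}^{(i)}=\ind_{J_{\rM}^{(i)}}^{\tilde{J}_{\rM}^{(i)}}\lambda_{\rM}^{(i)}$. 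First I would transport the weak-intertwining hypothesis up to the $\rM$-types: applying $\ind_{\tilde{J}_{\rM,1}'}^{\tilde{J}_{\rM,1}}$ to the defining relation --- using that $\tilde\lambda_{\rM}^{(1)}$ is a quotient of $\ind_{\tilde{J}_{\rM,1}'}^{\tilde{J}_{\rM,1}}\tilde\lambda_{\rM,1}'$ by Frobenius reciprocity and (1), and that $\tilde\lambda_{\rM,2}'$ is a summand of $\tilde\lambda_{\rM}^{(2)}\vert_{\tilde{J}_{\rM,2}'}$ --- and then restricting simple characters, one concludes, via the faithfulness of the restriction of simple characters from $\rM$ to $\rM'$ (\cite[\S 2]{C1}) and the rigidity of maximal simple $k$-types in $\rM$ (\cite{BuKuII}, \cite[Theorem 2.47]{C1}), that $(J_{\rM}^{(1)},\lambda_{\rM}^{(1)})$ and $(J_{\rM}^{(2)},\lambda_{\rM}^{(2)})$ are $\rM$-conjugate. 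Conjugating the second type inside $\rM$, we may then assume $\tilde{J}_{\rM}^{(1)}=\tilde{J}_{\rM}^{(2)}=:\tilde{J}_{\rM}$ and $\tilde\lambda_{\rM}^{(1)}=\tilde\lambda_{\rM}^{(2)}=:\tilde\lambda_{\rM}$, so that $\tilde\lambda_{\rM,1}'$ and an $\rM$-conjugate of $\tilde\lambda_{\rM,2}'$ are both irreducible components of the single semisimple representation $\tilde\lambda_{\rM}\vert_{\tilde{J}_{\rM}'}$. By Clifford theory $\tilde{J}_{\rM}$ acts transitively on the irreducible components of $\tilde\lambda_{\rM}\vert_{\tilde{J}_{\rM}'}$, and combining this with $\tilde{J}_{\rM}=\tilde{J}_{\rM}'J_{\rM}$ reduces the problem to showing that the two components concerned are conjugate under $\rM'$, and not merely under $\rM$; at this point one invokes the structure of the extended maximal simple $k$-type $(N_{\rM'}(\tilde\lambda_{\rM}'),\Lambda_{\rM'})$, the normaliser $N_{\rM'}(\tilde\lambda_{\rM}')$ together with the centre of $\rM$ being large enough to absorb the $\rM$-conjugation modulo $\rM'$. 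This gives the $\rM'$-conjugacy of $(\tilde{J}_{\rM,1}',\tilde\lambda_{\rM,1}')$ and $(\tilde{J}_{\rM,2}',\tilde\lambda_{\rM,2}')$.

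I expect the main obstacle to be this last step --- promoting an $\rM$-conjugacy of the $\rM'$-types to an $\rM'$-conjugacy --- together with the control of the restriction of simple characters from $\rM$ to $\rM'$ used in the first step; both form the technical core of \cite[\S 2]{C1}, on which this recollection ultimately rests.
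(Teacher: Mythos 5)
This proposition is not proved in the paper at all: it is a recollection (attributed ``C.'') of results from \cite{C1} (part (2) is the ``weak intertwining implies conjugacy'' statement, Theorem 3.25 of \cite{C1}), so there is no internal proof to compare with; what can be assessed is whether your sketch would actually establish the statement, and it would not. For (1), your reduction is correct: since $\rM'$ is the kernel of the product-of-block-determinants map, $\tilde{J}_{\rM}=\tilde{J}_{\rM}'J_{\rM}$ is equivalent to $\det(\tilde{J}_{\rM})=\det(J_{\rM})$. But that equality is exactly the content being recalled, and it is not formal: $[\tilde{J}_{\rM}:J_{\rM}]$ is a power of $p$ and $1+\mathfrak{p}_F$ is pro-$p$, so there is genuine room for $\det(\tilde{J}_{\rM})$ to be strictly larger; ``read off from the explicit description'' is an appeal to the very structure theory one is supposed to prove, so this part is a restatement rather than an argument.

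The serious gap is in (2). After transporting the intertwining to $\rM$ and conjugating so that $\tilde{\lambda}_{\rM,1}'$ and $\tilde{\lambda}_{\rM,2}'$ are both components of one restriction $\tilde{\lambda}_{\rM}\vert_{\tilde{J}_{\rM}'}$, your argument no longer uses the hypothesis that the two types are weakly intertwined \emph{in} $\rM'$: you invoke Clifford transitivity of $\tilde{J}_{\rM}$ (hence, by (1), of $J_{\rM}$) on the components and then propose to absorb the resulting $\rM$-conjugation into $\rM'$ using $N_{\rM'}(\tilde{\lambda}_{\rM}')$ and the centre of $\rM$. If that absorption worked, it would show that \emph{any} two irreducible components of $\tilde{\lambda}_{\rM}\vert_{\tilde{J}_{\rM}'}$ are $\rM'$-conjugate, and this is false: it is precisely the failure of this statement that the paper exploits. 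In Example \ref{ex 0029} (with $F=\bQ_5$, $\ell=3$, $\tilde{J}=J$) the two components $\bar{\pi}_{2,1}$ and $\bar{\pi}_{2,2}$ of $\bar{\pi}_2\vert_{J'}$ are $J$-conjugate by Clifford theory, yet the types $(J',\bar{\pi}_{2,1})$ and $(J',\bar{\pi}_{2,2})$ are \emph{not} $\rG'$-conjugate (Remark \ref{rem 0023}(2), Lemma \ref{lem 0012}); the centre cannot rescue this, since absorbing $j\in J_{\rM}$ by a central element requires $\det(j)$ to lie in $\det(Z_{\rM})$, e.g.\ in $(F^{\times})^n$ for $\rM=\rG$. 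So the step ``promote $\rM$-conjugacy to $\rM'$-conjugacy'' is not merely the hard point, it is unprovable in the generality in which you set it up: the weak-intertwining element in $\rM'$ must be kept and analysed (via the intertwining set of $\tilde{\lambda}_{\rM}'$ in $\rM'$, which is the technical core of \cite[Theorem 3.25]{C1}, in the spirit of the delicate $\mathrm{SL}_n$ analysis of \cite{BuKuII}), rather than discarded after passing to $\rM$.
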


\section{Category decomposition}
In this section, to simplify the notations, we denote by $\rG$ a Levi subgroup of $\mathrm{GL}_n(F)$ and $\rG'=\rG\cap\mathrm{SL}_n(F)$, which is a Levi subgroup of $\mathrm{SL}_n(F)$. Let $\rM$ be a Levi subgroup of $\rG$, we denote by $\rM'=\rM\cap\rG'$ a Levi subgroup of $\rG'$, and let $K$ be an open subgroup of $\rG$, we always denote by $K'=K\cap\rG'$. If $\pi$ is an irreducible $k$-representation of $K$, then $\pi'$ is one of its irreducible summand of $\pi\vert_{K'}$.

\subsection{Projective objects}
\label{section 3}
In this section, we will follow the strategy as in \cite{Helm} to construction some projective objects of $\mathrm{Rep}_k(\rG')$. We study first the projective cover of maximal simple $k$-types of Levi subgroups $\rM'$, then we consider their induced representations. Proposition \ref{prop 008} and Corollary \ref{cor 0010} give the relation between these projective objects and irreducible $k$-representations whose cuspidal support is given by the corresponding maximal simple $k$-type. The later properties will be used in Section \ref{section 4}.

Let $(J_{\rM},\lambda_{\rM})$ be a maximal simple $k$-type of $\rM$, and $\tilde{J}_{\rM}$ be the group of projective normaliser of $(J_{\rM},\lambda_{\rM})$ (see Section \ref{section 1.0}). Write $\lambda_{\rM}$ as $\kappa_{\rM}\otimes\sigma_{\rM}$. Let $\mathcal{P}_{\lambda_{\rM}}$ be the projective cover of $\lambda_{\rM}$, from \cite[Lemma 4.8]{Helm} we know that $\mathcal{P}_{\lambda_{\rM}}$ is isomorphic to $\cP_{\sigma_{\rM}}\otimes\kappa_{\rM}$, where $\mathcal{P}_{\sigma_{\rM}}$ is the projective cover of $\sigma_{\rM}$. Denote by $\tilde{\lambda}_{\rM}$ the irreducible $k$-representation $\ind_{J_{\rM}}^{\tilde{J}_{\rM}}\lambda_{\rM}$. Let $(\tilde{J}_{\rM}',\tilde{\lambda}_{\rM}')$ be a maximal simple $k$-type of $\rM'$ defined from $(J_{\rM},\lambda_{\rM})$ as in Section \ref{section 1.0}. Since $\cP_{\lambda_{\rM}}$ has finite length, we have $\mathcal{P}_{\lambda_{\rM}}\vert_{J_{\rM}'}=\oplus_{i=1}^s\mathcal{P}_i$, where $\mathcal{P}_i$ is an indecomposable projective $k$-representation of $J_{\rM}'$ for each $i$.

\begin{rem}
\label{rem 0014}
The projective cover $\mathcal{P}_{\sigma_{\rM}}$ is given by the theory of $k$-representations of finite general linear groups. When $\sigma_{\rM}$ is inflated from a supercuspidal $k$-representation of $\rM$, which means $(J_{\rM},\lambda_{\rM})$ is a maximal simple supercuspidal $k$-type of $\rM$, according to the construction of $\mathcal{P}_{\sigma_{\rM}}$ (in Lemma 5.11 of \cite{Geck} or see Corollary 3.5 of \cite{C2} ) as well as Deligne-Lusztig theory, we conclude that the irreducible subquotients of $\mathcal{P}_{\sigma_{\rM}}$ are isomorphic to $\sigma_{\rM}$.
\end{rem}


Let $\pi$ be an irreducible cuspidal $k$-representation of $\rM$ which contains $(J_{\rM},\lambda_{\rM})$, and $\pi'$ be an irreducible cuspidal $k$-representation of $\rM'$ which contains $(\tilde{J}_{\rM}',\tilde{\lambda}_{\rM}')$ such that $\pi' \hookrightarrow \pi\vert_{\rM'}$. We denote by 
$$\mathcal{P}_{[\rM,\pi]}=i_{\rM}^{\rG}\ind_{J_{\rM}}^{\rM}\mathcal{P}_{\lambda_{\rM}},$$
and by
$$\mathcal{P}_{[\rM',\pi']}=i_{\rM'}^{\rG'}\ind_{\tilde{J}_{\rM}'}^{\rM'}\mathcal{P}_{\tilde{\lambda}_{\rM}'}.$$

\begin{lem}
\label{lemma 001}
$\mathcal{P}_{[\rM',\pi']}$ is an direct summand of $\mathcal{P}_{[\rM,\pi]}\vert_{\rM'}$.
\end{lem}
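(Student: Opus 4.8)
Since $\mathcal{P}_{[\rM,\pi]}=i_{\rM}^{\rG}\ind_{J_{\rM}}^{\rM}\mathcal{P}_{\lambda_{\rM}}$ is a representation of $\rG$ and $\mathcal{P}_{[\rM',\pi']}=i_{\rM'}^{\rG'}\ind_{\tilde{J}_{\rM}'}^{\rM'}\mathcal{P}_{\tilde{\lambda}_{\rM}'}$ one of $\rG'$, the restriction meant in the statement is the one to $\rG'$. The plan is to move the problem first to the level of the Levi subgroups $\rM\supseteq\rM'$ by commuting parabolic induction with restriction, then down to the level of the maximal simple $k$-types by Mackey theory, where it becomes a routine statement about projective covers over compact open subgroups. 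First I would record the compatibility: for every smooth $k$-representation $V$ of $\rM$ there is a natural isomorphism $(i_{\rM}^{\rG}V)\vert_{\rG'}\cong i_{\rM'}^{\rG'}(V\vert_{\rM'})$. Indeed, writing $i_{\rM}^{\rG}$ as induction from the parabolic $P$ of $\rG$ with Levi $\rM$ twisted by $\delta_{P}^{1/2}$, the restriction along $\rG'\hookrightarrow\rG$ is governed by Mackey's formula with double cosets $P\backslash\rG/\rG'$; but $\rM/\rM'\to\rG/\rG'$ is an isomorphism, both groups being identified with $F^{\times}$ by the product of determinants, so $\rM\rG'=\rG$ and a fortiori $P\rG'=\rG$, i.e. there is a single double coset. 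Since $P\cap\rG'$ is the parabolic $P'$ of $\rG'$ with Levi $\rM'$ and the same unipotent radical, $\delta_{P}$ restricts to $\delta_{P'}$ on $\rM'$, and the claimed isomorphism follows with matching normalisations. Taking $V=\ind_{J_{\rM}}^{\rM}\mathcal{P}_{\lambda_{\rM}}$ and using that $i_{\rM'}^{\rG'}$ is additive, it suffices to prove that $\ind_{\tilde{J}_{\rM}'}^{\rM'}\mathcal{P}_{\tilde{\lambda}_{\rM}'}$ is a direct summand of $(\ind_{J_{\rM}}^{\rM}\mathcal{P}_{\lambda_{\rM}})\vert_{\rM'}$.

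Next, by transitivity of compact induction, $\ind_{J_{\rM}}^{\rM}\mathcal{P}_{\lambda_{\rM}}=\ind_{\tilde{J}_{\rM}}^{\rM}\tilde{\mathcal{P}}$ with $\tilde{\mathcal{P}}:=\ind_{J_{\rM}}^{\tilde{J}_{\rM}}\mathcal{P}_{\lambda_{\rM}}$. As $\tilde{J}_{\rM}$ is open in $\rM$, the double cosets in $\rM'\backslash\rM/\tilde{J}_{\rM}$ are open, so $(\ind_{\tilde{J}_{\rM}}^{\rM}\tilde{\mathcal{P}})\vert_{\rM'}$ decomposes as a genuine direct sum of the subrepresentations of functions supported on a single double coset; the summand attached to the identity coset $\tilde{J}_{\rM}\rM'$ is $\ind_{\tilde{J}_{\rM}\cap\rM'}^{\rM'}(\tilde{\mathcal{P}}\vert_{\tilde{J}_{\rM}\cap\rM'})=\ind_{\tilde{J}_{\rM}'}^{\rM'}(\tilde{\mathcal{P}}\vert_{\tilde{J}_{\rM}'})$, where I use $\tilde{J}_{\rM}\cap\rM'=\tilde{J}_{\rM}'$, a consequence of the recalled identity $\tilde{J}_{\rM}=\tilde{J}_{\rM}'J_{\rM}$. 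Hence it now suffices to show that $\mathcal{P}_{\tilde{\lambda}_{\rM}'}$ is a direct summand of $\tilde{\mathcal{P}}\vert_{\tilde{J}_{\rM}'}$.

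For this, the same identity $\tilde{J}_{\rM}=\tilde{J}_{\rM}'J_{\rM}$ gives a single double coset in $\tilde{J}_{\rM}'\backslash\tilde{J}_{\rM}/J_{\rM}$, so that $\tilde{\mathcal{P}}\vert_{\tilde{J}_{\rM}'}\cong\ind_{J_{\rM}'}^{\tilde{J}_{\rM}'}(\mathcal{P}_{\lambda_{\rM}}\vert_{J_{\rM}'})$, and likewise $\tilde{\lambda}_{\rM}\vert_{\tilde{J}_{\rM}'}\cong\ind_{J_{\rM}'}^{\tilde{J}_{\rM}'}(\lambda_{\rM}\vert_{J_{\rM}'})$. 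By construction $\tilde{\lambda}_{\rM}'$ is an irreducible component of $\tilde{\lambda}_{\rM}\vert_{\tilde{J}_{\rM}'}$, so Frobenius reciprocity and Clifford theory (using $J_{\rM}'\trianglelefteq\tilde{J}_{\rM}'$) produce an irreducible component $\mu'$ of $\lambda_{\rM}\vert_{J_{\rM}'}$ that is a summand of $\tilde{\lambda}_{\rM}'\vert_{J_{\rM}'}$. Restriction to the finite-index open subgroup $J_{\rM}'$ preserves projectivity, so $\mathcal{P}_{\lambda_{\rM}}\vert_{J_{\rM}'}$ is projective and surjects onto $\lambda_{\rM}\vert_{J_{\rM}'}$, hence onto $\mu'$; therefore the projective cover $\mathcal{P}_{\mu'}$ of $\mu'$ over $J_{\rM}'$ is a direct summand of $\mathcal{P}_{\lambda_{\rM}}\vert_{J_{\rM}'}$. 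Finally $\ind_{J_{\rM}'}^{\tilde{J}_{\rM}'}\mathcal{P}_{\mu'}$ is projective over $\tilde{J}_{\rM}'$ and, by Frobenius reciprocity, carries a nonzero --- hence surjective --- map onto the irreducible $\tilde{\lambda}_{\rM}'$, so $\mathcal{P}_{\tilde{\lambda}_{\rM}'}$ is a direct summand of $\ind_{J_{\rM}'}^{\tilde{J}_{\rM}'}\mathcal{P}_{\mu'}$, and a fortiori of $\ind_{J_{\rM}'}^{\tilde{J}_{\rM}'}(\mathcal{P}_{\lambda_{\rM}}\vert_{J_{\rM}'})\cong\tilde{\mathcal{P}}\vert_{\tilde{J}_{\rM}'}$. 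Chaining the three reductions yields the lemma.

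The step I expect to be the main obstacle is the first one: establishing the identity $\rM\rG'=\rG$ for the Levi subgroups involved and checking that parabolic induction commutes with restriction to $\rG'$ with the correct normalisations --- this ``geometric lemma for $\mathrm{SL}_n$'' is what makes the reduction to the level of the types legitimate. Once it is in place the remaining steps are formal, the only delicate point being that in the second step the Mackey decomposition is an honest direct sum (where openness of $\tilde{J}_{\rM}$ enters) rather than merely a filtration.
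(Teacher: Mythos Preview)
Your overall strategy coincides with the paper's: commute parabolic induction with restriction to $\rG'$, then reduce to showing that $\mathcal{P}_{\tilde{\lambda}_{\rM}'}$ is a direct summand of $(\ind_{J_{\rM}}^{\tilde{J}_{\rM}}\mathcal{P}_{\lambda_{\rM}})\vert_{\tilde{J}_{\rM}'}$ by the ``projective plus surjection onto an irreducible'' argument. Your write-up is in fact more explicit than the paper's on the first reduction (the paper simply records $(i_{\rM}^{\rG}\ind_{J_{\rM}}^{\rM}\mathcal{P}_{\lambda_{\rM}})\vert_{\rG'}\cong i_{\rM'}^{\rG'}(\ind_{J_{\rM}}^{\rM}\mathcal{P}_{\lambda_{\rM}})\vert_{\rM'}$ without comment) and on the intermediate Mackey step at the level of $\rM$.

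There is, however, a factual slip in your third paragraph: $J_{\rM}'$ is \emph{not} a finite-index, nor even an open, subgroup of $J_{\rM}$. Indeed $J_{\rM}'=J_{\rM}\cap\ker(\det)$, so $J_{\rM}/J_{\rM}'\cong\det(J_{\rM})$, which is an open compact subgroup of $F^{\times}$ (typically all of $\mathfrak{o}_F^{\times}$) and hence infinite. The same applies to $\tilde{J}_{\rM}'\subset\tilde{J}_{\rM}$. So your justification ``restriction to the finite-index open subgroup $J_{\rM}'$ preserves projectivity'' does not stand as written. The conclusion you need --- that $\mathcal{P}_{\lambda_{\rM}}\vert_{J_{\rM}'}$ (equivalently $\tilde{\mathcal{P}}\vert_{\tilde{J}_{\rM}'}$) is projective --- is in fact recorded by the paper just before the lemma, but it requires a different argument (e.g.\ factoring through the finite-index subgroup $J_{\rM}^1J_{\rM}'$ and then using that the remaining quotient is pro-$p$ with $\ell\neq p$, or invoking the explicit shape $\mathcal{P}_{\lambda_{\rM}}\cong\kappa_{\rM}\otimes\mathcal{P}_{\sigma_{\rM}}$). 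Once you fix this justification, note also that your detour through $J_{\rM}'$ and the auxiliary component $\mu'$ is unnecessary: the paper argues directly that $\tilde{\mathcal{P}}\vert_{\tilde{J}_{\rM}'}$ is projective and surjects onto $\tilde{\lambda}_{\rM}'$ (via $\tilde{\mathcal{P}}\twoheadrightarrow\tilde{\lambda}_{\rM}$ and the fact that $\tilde{\lambda}_{\rM}'$ is a summand of $\tilde{\lambda}_{\rM}\vert_{\tilde{J}_{\rM}'}$), which immediately forces $\mathcal{P}_{\tilde{\lambda}_{\rM}'}$ to split off.
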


\begin{proof}
We have 
$$(i_{\rM}^{\rG}\ind_{J_{\rM}}^{\rM}\mathcal{P}_{\lambda_{\rM}})\vert_{\rG'}\cong i_{\rM'}^{\rG'}(\ind_{J_{\rM}}^{\rM}\mathcal{P}_{\lambda_{\rM}})\vert_{\rM'}.$$
Since $\ind_{J_{\rM}}^{\tilde{J}_{\rM}}\mathcal{P}_{\lambda}\vert_{\tilde{J}_{\rM}'}$ is projective and has a surjection to $\tilde{\lambda}_{\rM}'$, it is deduced that $\mathcal{P}_{\tilde{\lambda}_{\rM}'}$ is a direct summand of $\ind_{J_{\rM}}^{\tilde{J}_{\rM}}\mathcal{P}_{\lambda}\vert_{\tilde{J}_{\rM}'}$. Hence $\mathcal{P}_{\tilde{\lambda}_{\rM}'}$ is a direct summand of $\cP_{\tilde{\lambda}_{\rM}}\vert_{\rM'}$ where $\cP_{\tilde{\lambda}_{\rM}}\cong\ind_{J_{\rM}}^{\tilde{J}_{\rM}}\mathcal{P}_{\lambda_{\rM}}$, and $\mathcal{P}_{[\rM',\pi']}$ is a direct summand of $\mathcal{P}_{[\rM,\pi]}\vert_{\rG'}$.
\end{proof}

Let $(J_{\rM},\lambda_{\rM})$ be a maximal simple supercuspidal $k$-type of $\rM$, and $(\tilde{J}_{\rM}',\tilde{\lambda}_{\rM}')$ be a maximal simple supercuspidal $k$-type of $\rM'$ defined from $(J_{\rM},\lambda_{\rM})$ as in Section \ref{section 1.0}. 

\begin{lem}
\label{lem 0012}
Let $\pi$ be an irreducible supercuspidal $k$-representation of $\rM$ which contains $(J_{\rM},\lambda_{\rM})$, and $\tau'$ be an irreducible subquotient of the projective cover $\mathcal{P}_{\tilde{\lambda}_{\rM}'}$ of $\tilde{\lambda}_{\rM}'$. Then $(\tilde{J}_{\rM}',\tau')$ is also a maximal simple supercuspidal $k$-type defined by $(J_{\rM},\lambda_{\rM})$, and there exists an irreducible direct component $\pi_0'$ of $\pi\vert_{\rM'}$ which contains $(\tilde{J}_{\rM}',\tau')$. In particular, when $\rM'=\rG'=\mathrm{SL}_n(F)$, if $\tau'$ is different from $\tilde{\lambda}_{\rM}'$, and suppose $\pi'$ is an irreducible direct component of $\pi\vert_{\rM'}$ containing $\tilde{\lambda}_{\rM}'$, then $\pi_0'$ is different from $\pi'.$
\end{lem}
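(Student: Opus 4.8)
The plan is to exploit the structure $\lambda_{\rM} = \kappa_{\rM} \otimes \sigma_{\rM}$ together with Remark \ref{rem 0014}, reducing everything to a statement about the cuspidal representation $\sigma_{\rM}$ of the finite group $J_{\rM}/J_{\rM}^1 \cong \mathrm{GL}_m(\F_q)$. First I would note that, by \cite[Lemma 4.8]{Helm}, $\cP_{\lambda_{\rM}} \cong \cP_{\sigma_{\rM}} \otimes \kappa_{\rM}$, so the irreducible subquotients of $\cP_{\lambda_{\rM}}$ are exactly $\sigma \otimes \kappa_{\rM}$ as $\sigma$ ranges over the irreducible subquotients of $\cP_{\sigma_{\rM}}$; since $(J_{\rM}, \lambda_{\rM})$ is a maximal simple supercuspidal $k$-type, Remark \ref{rem 0014} tells us all such $\sigma$ are isomorphic to $\sigma_{\rM}$ itself, hence every irreducible subquotient of $\cP_{\lambda_{\rM}}$ is isomorphic to $\lambda_{\rM}$. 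Passing to $\tilde J_{\rM}'$: the subquotients of $\cP_{\tilde\lambda_{\rM}'}$ sit inside the subquotients of $(\ind_{J_{\rM}}^{\tilde J_{\rM}} \cP_{\lambda_{\rM}})\vert_{\tilde J_{\rM}'}$ (by the direct-summand statement in the proof of Lemma \ref{lemma 001}), and since $\ind_{J_{\rM}}^{\tilde J_{\rM}} \lambda_{\rM} = \tilde\lambda_{\rM}$ is irreducible and induction is exact, each irreducible subquotient $\tau'$ of $\cP_{\tilde\lambda_{\rM}'}$ is an irreducible component of $\tilde\lambda_{\rM}\vert_{\tilde J_{\rM}'}$, i.e. a maximal simple supercuspidal $k$-type defined by $(J_{\rM}, \lambda_{\rM})$.

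Next I would produce the component $\pi_0'$ of $\pi\vert_{\rM'}$ containing $(\tilde J_{\rM}', \tau')$. By the construction recalled in Section \ref{section 1.0}, $\pi$ is itself built from $(J_{\rM}, \lambda_{\rM})$ via an extended type, and since $\tau'$ and $\tilde\lambda_{\rM}'$ are both components of $\tilde\lambda_{\rM}\vert_{\tilde J_{\rM}'}$, the Clifford theory for $\rM' \triangleleft \rM$ together with the fact that $\tilde J_{\rM} = \tilde J_{\rM}' J_{\rM}$ (recalled Proposition) shows that every component of $\tilde\lambda_{\rM}\vert_{\tilde J_{\rM}'}$ occurs in the restriction to $\tilde J_{\rM}'$ of some component of $\pi\vert_{\rM'}$; hence there is an irreducible component $\pi_0' \hookrightarrow \pi\vert_{\rM'}$ containing $(\tilde J_{\rM}', \tau')$. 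Concretely, the components of $\pi\vert_{\rM'}$ are permuted transitively by a finite group, and the corresponding types $(\tilde J_{\rM}', \cdot)$ run through all components of $\tilde\lambda_{\rM}\vert_{\tilde J_{\rM}'}$ lying in a single orbit, so $\tau'$, being such a component, is contained in one of them.

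For the final assertion with $\rM' = \rG' = \mathrm{SL}_n(F)$: suppose $\pi_0' \cong \pi'$, where $\pi'$ contains $(\tilde J_{\rM}', \tilde\lambda_{\rM}')$. Then the single irreducible representation $\pi' = \pi_0'$ contains both $(\tilde J_{\rM}', \tilde\lambda_{\rM}')$ and $(\tilde J_{\rM}', \tau')$; I would invoke the second part of the last recalled Proposition — two maximal simple $k$-types of $\rM'$ weakly intertwined in $\rM'$ are $\rM'$-conjugate — applied inside $\pi'$ (which provides a weak intertwining between the two types), to conclude $\tilde\lambda_{\rM}'$ and $\tau'$ are $\rG'$-conjugate, hence each is an unramified twist of the other. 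But for $\mathrm{SL}_n$ the relevant unramified twisting is trivial on $\tilde J_{\rM}'$ (a type is fixed by unramified quasicharacters of $F^\times$, which are trivial on the compact $\tilde J_{\rM}'$; equivalently $[\tilde J_{\rM}',\tilde\lambda_{\rM}']$ already exhausts its inertial class), forcing $\tau' \cong \tilde\lambda_{\rM}'$, contradicting the hypothesis $\tau' \neq \tilde\lambda_{\rM}'$. Therefore $\pi_0' \not\cong \pi'$.

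I expect the main obstacle to be the middle step: carefully setting up the Clifford-theoretic bookkeeping between $\rM \triangleleft \rM'$ at the group level and $\tilde J_{\rM} \triangleleft \tilde J_{\rM}'$ at the type level so that "$\tau'$ is a component of $\tilde\lambda_{\rM}\vert_{\tilde J_{\rM}'}$" really does force "$\tau'$ is contained in some component of $\pi\vert_{\rM'}$." One must check that the orbit of components of $\pi\vert_{\rM'}$ under $\rM/\rM'F^\times$ matches the orbit of components of $\tilde\lambda_{\rM}\vert_{\tilde J_{\rM}'}$ that are produced by $\cP_{\tilde\lambda_{\rM}'}$ — this is where the compatibility of the extended-type construction with restriction, as developed in \cite{C1}, does the real work; the rest is formal.
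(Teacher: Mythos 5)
The first two assertions are handled essentially as in the paper: reduce via Remark \ref{rem 0014} to the fact that every irreducible subquotient of $\cP_{\lambda_{\rM}}$ is $\lambda_{\rM}$, use exactness of induction and irreducibility of $\tilde{\lambda}_{\rM}=\ind_{J_{\rM}}^{\tilde{J}_{\rM}}\lambda_{\rM}$ to see that $\tau'$ is a component of $\tilde{\lambda}_{\rM}\vert_{\tilde{J}_{\rM}'}$, and then observe that $\tau'\hookrightarrow\tilde{\lambda}_{\rM}\vert_{\tilde{J}_{\rM}'}\hookrightarrow\pi\vert_{\tilde{J}_{\rM}'}=\oplus_i\pi_i'\vert_{\tilde{J}_{\rM}'}$ forces $\tau'$ to land in some component $\pi_0'$; your Clifford-theoretic detour in the middle step is heavier than needed but not wrong.

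The genuine gap is in the last step, the case $\rM'=\rG'=\mathrm{SL}_n(F)$. From $\pi_0'\cong\pi'$ you correctly get that $\tilde{\lambda}_{\rM}'$ and $\tau'$ are weakly intertwined, hence $\rG'$-conjugate. But $\rG'$-conjugacy means $\tau'\cong g(\tilde{\lambda}_{\rM}')$ for some $g\in\rG'$ normalising $\tilde{J}_{\rM}'$; it does \emph{not} mean that $\tau'$ and $\tilde{\lambda}_{\rM}'$ differ by an unramified twist, and your assertion that such twisting is trivial on the compact group therefore forces $\tau'\cong\tilde{\lambda}_{\rM}'$ conflates conjugation with character twisting. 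Conjugation by an element of $\rG'$ outside $\tilde{J}_{\rM}'$ can genuinely permute the non-isomorphic components of $\tilde{\lambda}_{\rM}\vert_{\tilde{J}_{\rM}'}$ --- this is exactly the phenomenon recorded in Remark \ref{rem 0033} for proper Levi subgroups, where two non-isomorphic types defined from the same $(J_{\rM},\lambda_{\rM})$ are $\rM'$-conjugate and lie in one irreducible supercuspidal representation. Ruling this out for $\mathrm{SL}_n(F)$ is precisely the content of the statement you are proving, so your argument is circular at this point. The paper closes the gap differently: for $\rG'=\mathrm{SL}_n(F)$, Mackey theory shows $\ind_{\tilde{J}_{\rG}'}^{\rG'}\bigl(\tilde{\lambda}_{\rG}\vert_{\tilde{J}_{\rG}'}\bigr)$ embeds in $\pi\vert_{\rG'}$ and each component of $\tilde{\lambda}_{\rG}\vert_{\tilde{J}_{\rG}'}$ induces irreducibly, so the component containing $\tau'$ is $\ind_{\tilde{J}_{\rG}'}^{\rG'}\tau'$ and the one containing $\tilde{\lambda}_{\rG}'$ is $\ind_{\tilde{J}_{\rG}'}^{\rG'}\tilde{\lambda}_{\rG}'$; if these were isomorphic, $\pi\vert_{\rG'}$ would contain that class with multiplicity at least two, contradicting the multiplicity-freeness of $\pi\vert_{\rG'}$ (Proposition 2.35 of \cite{C1}). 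You need this (or an equivalent) input to finish; the conjugacy argument alone cannot.
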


\begin{proof}
Recall that $\mathcal{P}_{\lambda_{\rM}}$ is the projective $k$-cover of $\lambda_{\rM}$, as explained in Remark \ref{rem 0014}, its irreducible subquotients are isomorphic to $\lambda_{\rM}$. As in the proof of Lemma \ref{lemma 001}, we know that the projective representation $\mathcal{P}_{\tilde{\lambda}_{\rM}'}$ is an indecomposable direct component of $\ind_{J_{\rM}}^{\tilde{J}_{\rM}}\mathcal{P}_{\lambda_{\rM}}\vert_{\tilde{J}_{\rM}'}$. As in Section \ref{section 1.0}, the induced representation $\tilde{\lambda}_{\rM}:=\ind_{J_{\rM}}^{\tilde{J}_{\rM}}\lambda_{\rM}$ is irreducible. By the exactness of induction functor, we know that the irreducible subquotients of $\ind_{J_{\rM}}^{\tilde{J}_{\rM}}\mathcal{P}_{\lambda_{\rM}}$ are isomorphic to $\tilde{\lambda}_{\rM}$, which implies that an irreducible subquotient of $\mathcal{P}_{\tilde{\lambda}_{\rM}'}$ is isomorphic to an irreducible direct component of $\tilde{\lambda}_{\rM}\vert_{\tilde{J}_{\rM}'}$. Since $\pi$ contains $\tilde{\lambda}_{\rM}$ after restricting to $\tilde{J}_{\rM}$, by the Mackey's theory, any irreducible direct component of $\tilde{\lambda}_{\rM}\vert_{\tilde{J}_{\rM}'}$ must be contained in an irreducible direct component of $\pi\vert_{\rM'}$.

When $\rM'=\rG'=\mathrm{SL}_n(F)$, by Mackey's theory we have the inducetion $\ind_{\tilde{J}_{\rG}'}^{\rG'}\res_{\tilde{J}_{\rG}'}^{\tilde{J}_{\rG}}\tilde{\lambda}_{\rG}$ is a sub-representation of $\pi\vert_{\rG'}$, and each irreducible component of $\tilde{\lambda}_{\rG}\vert_{\tilde{J}_{\rG}'}$ is irreducibly induced to $\rG'$. The second statement is directly from the fact that $\pi\vert_{\rM'}$ is multiplicity-free as proved in Proposition 2.35 of \cite{C1}.
\end{proof}

\begin{rem}
\label{rem 0033}
When $\rM'$ is a proper Levi subgroup of $\rG'$, it is possible that two different maximal simple supercuspidal $k$-types $(\tilde{J}_{\rM}',\tilde{\lambda}_{\rM}')$ and $(\tilde{J}_{\rM}',\tau')$, which are defined from a same maximal simple supercuspidal $k$-type, are $\rM'$-conjugate to each other, which implies that they may be contained in a same irreducible supercuspidal $k$-representation of $\rM'$.
\end{rem}

\begin{lem}
\label{lemma 004}
\begin{enumerate}
\item Let $\alpha\in\tilde{J}_{\rM}$, then $\alpha(\cP_{\lambda_{\rM}})\cong\cP_{\alpha(\lambda_{\rM})}\cong\cP_{\lambda_{\rM}}\otimes\theta$, where $\theta\in\det(J_{\rM})^{\wedge}$ and $\alpha(\lambda_{\rM})\cong\lambda_{\rM}\otimes\theta$.
\item Let $(\tilde{J}_{\rM}',\tilde{\lambda}_1')$ and $(\tilde{J}_{\rM}',\tilde{\lambda}_2')$ be two different maximal simple $k$-types defined from $(J_{\rM},\lambda_{\rM})$. Let $\alpha\in\tilde{J}_{\rM}$ such that $\alpha(\tilde{\lambda}_1')\cong\tilde{\lambda}_2'$, then for the projective covers we have $\alpha(\cP_{\tilde{\lambda}_1'})\cong\cP_{\tilde{\lambda}_2'}$.
\end{enumerate}
\end{lem}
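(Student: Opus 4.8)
The plan is to prove the two parts in sequence, deriving the second from the first. For part (1), I would start from the definition of the projective cover: $\cP_{\lambda_{\rM}}$ is the (up to isomorphism unique) indecomposable projective $k$-representation of $J_{\rM}$ admitting a surjection onto $\lambda_{\rM}$. Conjugation by $\alpha\in\tilde J_{\rM}$ is an exact autoequivalence of $\mathrm{Rep}_k(J_{\rM})$ (since $\tilde J_{\rM}$ normalises $J_{\rM}$), so it sends projective covers to projective covers; hence $\alpha(\cP_{\lambda_{\rM}})\cong\cP_{\alpha(\lambda_{\rM})}$. Now I invoke the structure recalled in Section \ref{section 1.0}: $\lambda_{\rM}=\kappa_{\rM}\otimes\sigma_{\rM}$, and for $\alpha$ in the projective normaliser one has $\alpha(\lambda_{\rM})\cong\lambda_{\rM}\otimes\chi$ for a $k$-quasicharacter $\chi$ of $F^{\times}$; restricting $\chi$ through $\det$ gives a character $\theta\in\det(J_{\rM})^{\wedge}$ with $\alpha(\lambda_{\rM})\cong\lambda_{\rM}\otimes\theta$. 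Since tensoring by the one-dimensional character $\theta$ is again an exact autoequivalence preserving projectivity and indecomposability, and sends a surjection $\cP_{\lambda_{\rM}}\twoheadrightarrow\lambda_{\rM}$ to a surjection $\cP_{\lambda_{\rM}}\otimes\theta\twoheadrightarrow\lambda_{\rM}\otimes\theta$, uniqueness of the projective cover gives $\cP_{\alpha(\lambda_{\rM})}\cong\cP_{\lambda_{\rM}}\otimes\theta$. (Concretely, $\cP_{\lambda_{\rM}}\otimes\theta\cong(\cP_{\sigma_{\rM}}\otimes\theta)\otimes\kappa_{\rM}$, consistent with $\cP_{\lambda_{\rM}}\cong\cP_{\sigma_{\rM}}\otimes\kappa_{\rM}$ of \cite[Lemma 4.8]{Helm}, the twist being absorbed on the $\sigma_{\rM}$-factor.)

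For part (2), the input is that $\tilde\lambda_i'$ ($i=1,2$) are irreducible direct components of $\tilde\lambda_{\rM}\vert_{\tilde J_{\rM}'}$, where $\tilde\lambda_{\rM}=\ind_{J_{\rM}}^{\tilde J_{\rM}}\lambda_{\rM}$, and $\cP_{\tilde\lambda_i'}$ is their projective cover in $\mathrm{Rep}_k(\tilde J_{\rM}')$. Conjugation by $\alpha\in\tilde J_{\rM}$ is again an exact autoequivalence of $\mathrm{Rep}_k(\tilde J_{\rM}')$ (as $\alpha$ normalises $\tilde J_{\rM}'$, which is normal in $\tilde J_{\rM}$), hence sends the projective cover of $\tilde\lambda_1'$ to the projective cover of $\alpha(\tilde\lambda_1')$: that is, $\alpha(\cP_{\tilde\lambda_1'})\cong\cP_{\alpha(\tilde\lambda_1')}$. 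By hypothesis $\alpha(\tilde\lambda_1')\cong\tilde\lambda_2'$, and therefore $\alpha(\cP_{\tilde\lambda_1'})\cong\cP_{\tilde\lambda_2'}$ by uniqueness of projective covers. Everything here is formal once one observes that conjugation by a normalising element is an exact autoequivalence preserving projectivity and mapping $\tilde\lambda_1'$ to $\tilde\lambda_2'$; the roles of part (1) in part (2) are purely to supply the context, though one could alternatively deduce $\alpha(\cP_{\tilde\lambda_1'})$ from part (1) together with the compatibility $\cP_{\tilde\lambda_i'}\mid\ind_{J_{\rM}}^{\tilde J_{\rM}}\cP_{\lambda_{\rM}}\vert_{\tilde J_{\rM}'}$ established in the proof of Lemma \ref{lemma 001}.

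I do not anticipate a serious obstacle; the only point requiring a little care is the passage from a quasicharacter $\chi$ of $F^{\times}$ to an honest character $\theta$ of the finite abelian group $\det(J_{\rM})$ in part (1), i.e. checking that $\chi$ factors appropriately through $\det$ on $J_{\rM}$ — this follows from the explicit description of the action of the projective normaliser on $\lambda_{\rM}$ recalled from \cite[2.15]{C1} and \cite[2.2]{BuKuII}, where the twisting character is a character of $F^{\times}$ precomposed with $\det$. Beyond that, the proof is a bookkeeping exercise in the uniqueness and functoriality of projective covers under exact autoequivalences (conjugation, tensoring by a one-dimensional character), so I would keep it short.
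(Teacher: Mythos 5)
Your proposal is correct and follows essentially the same route as the paper: twisting the surjection $\cP_{\lambda_{\rM}}\twoheadrightarrow\lambda_{\rM}$ by $\theta$ preserves projectivity and indecomposability, conjugation by $\alpha$ sends the projective cover of $\lambda_{\rM}$ (resp.\ of $\tilde{\lambda}_1'$) to that of $\alpha(\lambda_{\rM})$ (resp.\ of $\tilde{\lambda}_2'$), and uniqueness of projective covers concludes both parts. No discrepancy to report.
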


\begin{proof}
For the first part, there is a surjective morphism from $\cP_{\lambda_{\rM}}\otimes\theta$ to $\lambda_{\rM}\otimes\theta$ and is indecomposable. Moreover, the projectivity can be easily deduced directly from the definition. Since $\alpha(\cP_{\lambda_{\rM}})$ is the projective cover of $\alpha(\lambda_{\rM})$, we obtain the expected equality. The second part can be deduced in a similar way.
\end{proof}

\begin{prop}
\label{prop 008}
Recall that $\rG'$ is a Levi subgroup of $\mathrm{SL}_n(F)$. Let $\rho'$ be an irreducible $k$-representation of $\rG'$ and $(\rM',\pi')$ be a cuspidal pair of $\rG'$ inside the cuspidal support of $\rho'$, then there is a surjective morphism
$$\cP_{[\rM',\pi']}\rightarrow\rho'.$$
\end{prop}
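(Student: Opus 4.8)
The plan is to reduce the statement for $\rG'$ to the analogous, already-understood statement for $\rG$ (a product of general linear groups), and then descend along the restriction $\res^{\rG}_{\rG'}$ using Lemma \ref{lemma 001}. First I would recall that, since $(\rM',\pi')$ lies in the cuspidal support of $\rho'$, there is an embedding $\rho'\hookrightarrow i_{\rM'}^{\rG'}\pi'$ or a surjection $i_{\rM'}^{\rG'}\pi'\twoheadrightarrow\rho'$; by passing to contragredients if necessary (parabolic induction commutes with duals up to replacing $\pi'$ by $\check{\pi}'$, which does not change the inertial class), I may assume $\rho'$ is a quotient of $i_{\rM'}^{\rG'}\pi'$. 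Next, by the theory of maximal simple $k$-types for $\rM'$ recalled in Section \ref{section 1.0}, the cuspidal $\pi'$ contains a maximal simple $k$-type $(\tilde{J}_{\rM}',\tilde{\lambda}_{\rM}')$, and is of the form $\ind_{N_{\rM'}(\tilde{\lambda}_{\rM}')}^{\rM'}\Lambda_{\rM'}$; in particular $\pi'$ is a quotient of $\ind_{\tilde{J}_{\rM}'}^{\rM'}\tilde{\lambda}_{\rM}'$ (restrict the extended type and use Frobenius reciprocity, or transitivity of compact induction). Since $\cP_{\tilde{\lambda}_{\rM}'}\twoheadrightarrow\tilde{\lambda}_{\rM}'$ and $\ind_{\tilde{J}_{\rM}'}^{\rM'}$ is exact, composing gives a surjection $\ind_{\tilde{J}_{\rM}'}^{\rM'}\cP_{\tilde{\lambda}_{\rM}'}\twoheadrightarrow\pi'$, and then applying the exact functor $i_{\rM'}^{\rG'}$ yields $\cP_{[\rM',\pi']}=i_{\rM'}^{\rG'}\ind_{\tilde{J}_{\rM}'}^{\rM'}\cP_{\tilde{\lambda}_{\rM}'}\twoheadrightarrow i_{\rM'}^{\rG'}\pi'\twoheadrightarrow\rho'$.

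The one point that needs care, and which I expect to be the main obstacle, is the very first reduction: knowing that $(\rM',\pi')$ is \emph{in} the cuspidal support of $\rho'$ only gives a sub- or quotient-embedding into $i_{\rM'}^{\rG'}\pi'$ for \emph{one} representative of the $\rG'$-conjugacy class, and the construction $\cP_{[\rM',\pi']}$ is attached to a fixed maximal simple type rather than to $\pi'$ itself. So I would first check that $\cP_{[\rM',\pi']}$ depends (up to isomorphism) only on the $\rG'$-conjugacy class of the cuspidal pair $(\rM',\pi')$: $\rG'$-conjugation permutes maximal simple types within $\pi'$, and by the second part of Proposition (C.) (weak intertwining $\Leftrightarrow$ $\rM'$-conjugacy) together with Lemma \ref{lemma 004}(2) the projective cover transforms accordingly, so $i_{\rM'}^{\rG'}\ind_{\tilde{J}_{\rM}'}^{\rM'}\cP_{\tilde{\lambda}_{\rM}'}$ is unchanged. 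Hence it is harmless to replace $(\rM',\pi')$ by the conjugate for which the explicit sub/quotient relation with $\rho'$ holds.

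A cleaner alternative, matching the stated strategy of the paper, is to go through $\rG$: choose a cuspidal $\pi$ of $\rM$ with $\pi'\hookrightarrow\pi|_{\rM'}$ containing $(J_{\rM},\lambda_{\rM})$, so that $\rho'$ occurs in $(i_{\rM}^{\rG}\pi)|_{\rG'}=i_{\rM'}^{\rG'}(\pi|_{\rM'})$, hence occurs in $\cP_{[\rM,\pi]}|_{\rG'}$ as a subquotient because $\cP_{[\rM,\pi]}=i_{\rM}^{\rG}\ind_{J_{\rM}}^{\rM}\cP_{\lambda_{\rM}}\twoheadrightarrow i_{\rM}^{\rG}\pi$ (using $\cP_{\lambda_{\rM}}\twoheadrightarrow\lambda_{\rM}$, the GL-group statement $\ind_{J_{\rM}}^{\rM}\lambda_{\rM}\twoheadrightarrow\pi$, and exactness of $i_{\rM}^{\rG}$). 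Then invoke Lemma \ref{lemma 001}: $\cP_{[\rM',\pi']}$ is a direct summand of $\cP_{[\rM,\pi]}|_{\rG'}$, and it remains to see that the particular summand $\cP_{[\rM',\pi']}$ is the one surjecting onto $\rho'$. This is where the multiplicity-one statement (Proposition 2.35 of \cite{C1}) and Lemma \ref{lem 0012} come in: the summands of $\cP_{[\rM,\pi]}|_{\rG'}$ are parametrised by the $\rM'$-classes of maximal simple types inside $\pi|_{\rM'}$ (equivalently inside $\tilde{\lambda}_{\rM}|_{\tilde{J}_{\rM}'}$), $\rho'$ contains a unique such class, and that class is exactly the one attached to $\pi'$; tracking the surjection $\cP_{\lambda_{\rM}}\twoheadrightarrow\lambda_{\rM}$ through $\ind$ and $i$ shows the corresponding block of $\cP_{[\rM',\pi']}$ surjects onto $\rho'$. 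I would present whichever of these two routes is shortest given the ambient notation, but I expect the bookkeeping identifying the correct direct summand to be the delicate part.
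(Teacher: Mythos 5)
The core of your first route is exactly the paper's argument and is correct as far as it goes: take a maximal simple $k$-type $(\tilde{J}_{\rM}',\tilde{\lambda}_{\rM}')$ inside $\pi'$, use Frobenius reciprocity for compact induction together with $\cP_{\tilde{\lambda}_{\rM}'}\twoheadrightarrow\tilde{\lambda}_{\rM}'$ to get $\ind_{\tilde{J}_{\rM}'}^{\rM'}\cP_{\tilde{\lambda}_{\rM}'}\twoheadrightarrow\pi'$, and apply the exact functor $i_{\rM'}^{\rG'}$; this settles the case where $\rho'$ is a \emph{quotient} of $i_{\rM'}^{\rG'}\pi'$. The genuine gap is your treatment of the other half of the definition of cuspidal support, when $\rho'$ is only a subrepresentation of $i_{\rM'}^{\rG'}\pi'$. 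Passing to contragredients does not reduce this to the quotient case: dualizing $\rho'\hookrightarrow i_{\rM'}^{\rG'}\pi'$ gives a surjection $i_{\rM'}^{\rG'}({\pi'}^{\vee})\twoheadrightarrow{\rho'}^{\vee}$, i.e.\ a statement about ${\rho'}^{\vee}$ and the object $\cP_{[\rM',{\pi'}^{\vee}]}$; running your argument there and dualizing back only produces an embedding $\rho'\hookrightarrow\cP_{[\rM',{\pi'}^{\vee}]}^{\vee}$, which is Corollary \ref{cor 0010}, not the asserted surjection from $\cP_{[\rM',\pi']}$. Moreover the parenthetical claim that replacing $\pi'$ by ${\pi'}^{\vee}$ ``does not change the inertial class'' is false in general (a cuspidal representation is usually not an unramified twist of its contragredient), so you cannot even identify $\cP_{[\rM',{\pi'}^{\vee}]}$ with $\cP_{[\rM',\pi']}$. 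What is needed at this point is the standard fact that an irreducible representation which embeds into $i_{\rM'}^{\rG'}\pi'$ is also a quotient of parabolic induction from the \emph{same} cuspidal pair (with respect to a suitable parabolic); the paper disposes of this step by citing \cite[\S II, 2.20]{V1}, and some such input (second adjunction plus comparison of the two Jacquet functors, or the quoted result) must appear in your proof.

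Your alternative route through $\rG$ has the same difficulty in a worse form: knowing that $\rho'$ occurs as a \emph{subquotient} of $\cP_{[\rM,\pi]}\vert_{\rG'}$, and that $\cP_{[\rM',\pi']}$ is a direct summand of this restriction (Lemma \ref{lemma 001}), does not yield a surjection $\cP_{[\rM',\pi']}\twoheadrightarrow\rho'$: projectivity lets you lift maps onto quotients, it does not promote subquotients to quotients. In addition, the claimed parametrisation of \emph{all} indecomposable summands of $\cP_{[\rM,\pi]}\vert_{\rG'}$ by $\rM'$-conjugacy classes of types inside $\pi\vert_{\rM'}$ is not established in the paper (Lemma \ref{lemma 001} only exhibits one summand), so the bookkeeping you defer to the end would itself require a new argument. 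The short fix is to keep your first route and replace the duality reduction by the citation the paper uses.
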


\begin{proof}
Let $(\tilde{J}_{\rM}',\tilde{\lambda}_{\rM}')$ be a maximal simple $k$-type contained in $\pi'$, hence there is an injection $\tilde{\lambda}_{\rM}'\rightarrow \res_{\tilde{J}_{\rM}'}^{\rM'}\pi'$. By Frobenius reciprocity, it gives a surjection $\ind_{\tilde{J}_{\rM}'}^{\rM'}\cP_{\tilde{\lambda}_{\rM}'}\rightarrow\pi'$, which induces a surjection $\cP_{[\rM',\pi']}\rightarrow i_{\rM'}^{\rG'}\pi'$, hence a surjection from $\cP_{[\rM',\pi']}$ to $\rho'$ by \cite[\S II, 2.20]{V1}.
\end{proof}

\begin{cor}
\label{cor 0010}
Let $\mathcal{I}_{[\rM',\pi']}$ be the contragredient of $\cP_{[\rM',\pi^{'\vee}]}$, where $\pi^{'\vee}$ is the contragredient of $\pi'$. Suppose that the cuspidal support of $\tau'$ is $[\rM',\pi']$, then $\tau'$ is embedding to $\mathcal{I}_{[\rM',\pi']}$.
\end{cor}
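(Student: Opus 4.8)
The plan is to dualize Proposition \ref{prop 008}. The key observation is that passing to the contragredient swaps surjections and injections, and sends projective objects to injective objects, provided we stay within a suitable class of representations (here, smooth $k$-representations of $\rG'$, where the contragredient is an exact anti-equivalence on the subcategory of admissible representations, and more generally behaves well enough on the objects at hand because $\cP_{[\rM',\pi^{'\vee}]}$ is built from finite-length data by the functors $i_{\rM'}^{\rG'}$ and $\ind_{\tilde J_{\rM}'}^{\rM'}$). So first I would record that $\mathcal{I}_{[\rM',\pi']} := (\cP_{[\rM',\pi^{'\vee}]})^{\vee}$ is injective in $\mathrm{Rep}_k(\rG')$: this follows because the contragredient of a projective object is injective (the contragredient is exact and turns epimorphisms onto a representation into monomorphisms out of its dual), together with the fact that $(-)^{\vee}$ applied to the defining ingredients commutes appropriately with compact induction and parabolic induction — concretely, $i_{\rM'}^{\rG'}$ commutes with contragredient up to the modulus character (so one should be slightly careful and note $\cP_{[\rM',\pi^{'\vee}]}$ is defined via $i_{\rM'}^{\rG'}$ of an induced projective cover, and its dual is built from $i_{\rM'}^{\rG'}$ of the dual).

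Next, the heart of the matter: let $\tau'$ be irreducible with cuspidal support $[\rM',\pi']$. Then $\tau^{'\vee}$ is irreducible, and its cuspidal support is $[\rM',\pi^{'\vee}]$ (the contragredient of an irreducible with cuspidal support $(\rM',\pi')$ has cuspidal support $(\rM',\pi^{'\vee})$, since $(-)^{\vee}$ commutes with parabolic induction up to twist and with passing to sub-quotients). Applying Proposition \ref{prop 008} to $\rho' = \tau^{'\vee}$ and the cuspidal pair $(\rM',\pi^{'\vee})$ yields a surjection $\cP_{[\rM',\pi^{'\vee}]}\twoheadrightarrow\tau^{'\vee}$. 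Now take contragredients: since $\tau'$ is irreducible hence admissible, $(\tau^{'\vee})^{\vee}\cong\tau'$, and the surjection dualizes to an injection $\tau'\cong(\tau^{'\vee})^{\vee}\hookrightarrow(\cP_{[\rM',\pi^{'\vee}]})^{\vee}=\mathcal{I}_{[\rM',\pi']}$. This is exactly the claimed embedding.

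The main obstacle, such as it is, is purely bookkeeping about the contragredient functor on non-admissible smooth representations: $\cP_{[\rM',\pi^{'\vee}]}$ is an infinite-dimensional projective object, not admissible, so one cannot blithely assert $(V^{\vee})^{\vee}\cong V$ for it, and one must make sure that "dualizing a surjection gives an injection" is applied only to the morphism $\cP_{[\rM',\pi^{'\vee}]}\twoheadrightarrow\tau^{'\vee}$ whose target $\tau^{'\vee}$ is admissible — and there the double-dual identification is legitimate because it is applied to $\tau^{'\vee}$, not to $\cP$. So the argument should be phrased: apply $(-)^{\vee}$ to $\cP_{[\rM',\pi^{'\vee}]}\twoheadrightarrow\tau^{'\vee}$ to get $\tau'\cong(\tau^{'\vee})^{\vee}\hookrightarrow(\cP_{[\rM',\pi^{'\vee}]})^{\vee}$, using only exactness of the smooth contragredient functor (it takes epimorphisms to monomorphisms) and admissibility of $\tau^{'\vee}$. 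I would also cite the relevant reference (e.g. \cite[\S I.4]{V1}) for exactness of $(-)^{\vee}$ and for compatibility of contragredient with $i_{\rM'}^{\rG'}$ up to modulus, and note that the twist by the modulus character does not change cuspidal support classes, so the identification of $\mathcal{I}_{[\rM',\pi']}$ as an injective object attached to the class $[\rM',\pi']$ is consistent.
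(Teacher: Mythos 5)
Your argument is exactly the intended one: the corollary is the dualization of Proposition \ref{prop 008}, obtained by applying that proposition to $\tau^{'\vee}$ (whose cuspidal support is $[\rM',\pi^{'\vee}]$ since contragredient commutes with $i_{\rM'}^{\rG'}$) and then taking smooth duals of the surjection $\cP_{[\rM',\pi^{'\vee}]}\twoheadrightarrow\tau^{'\vee}$, using admissibility of the irreducible $\tau'$ for the double-dual identification. This matches the paper's (implicit) proof, and your care about applying biduality only to $\tau^{'\vee}$ rather than to the non-admissible projective is exactly the right bookkeeping.
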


\subsection{Category decomposition}    
\label{section 4}

Recall that in this section $\rG'$ is a Levi subgroup of $\mathrm{SL}_n(F)$ and $\rG$ is a Levi subgroup of $\mathrm{GL}_n(F)$ such that $\rG'=\rG\cap\mathrm{SL}_n(F)$. A decomposition of $\mathrm{Rep}_k(\rG')$ by its full sub-categories will be given in Theorem \ref{thm 009} according to the $\rG$-twist equivalent supercuspidal classes of $\rG'$ (see the paragraph below Proposition \ref{prop 007} for $\rG$-twist equivalent equivalence). This will not be a block decomposition in general, which means it does not always verify the last condition of Equation (\ref{0001}), however we will see in Section \ref{section 5} that it is not always possible to decompose $\mathrm{Rep}_k(\rG')$ according to the $\rG'$-inertially equivalent supercuspidal classes as for $\mathrm{Rep}_k(\rG)$ in Equation (\ref{0001}). 

Let $\mathcal{A}$  be a familly of $\rG$-inertially equivalent supercuspidal classes of $\rG$, and denote by $\mathrm{Rep}_k(\rG)_{\mathcal{A}}$ the union of blocks $\bigcup_{[\rM,\pi]\in\mathcal{A}}\mathrm{Rep}_k(\rG)_{[\rM,\pi]}$. Let $\mathcal{A}'$ be a family of $\rG'$-inertially equivalent supercuspidal classes of $\rG'$, verifying that $[\rM',\pi']\in\mathcal{A'}$ if and only if there exists $[\rM,\pi]\in\mathcal{A}$ such that $\rM'=\rM\cap\rG'$ and $\pi'\rightarrow\pi\vert_{\rM'}$. Let $\rL$ be a Levi subgroup of $\rG$ which contains $\rM$, denote by $\mathcal{A}_{\rL}$ the family of $\rL$-inertially equivalent supercuspidal classes in the form of $[w(\rM),w(\pi)]_{\rM}$, where $[\rM,\pi]\in\mathcal{A}$, and recall that $[\cdot,\cdot]_{\rL}$ is the $\rL$-inertially equivalent class, and $w\in\rG$ such that $w(\rM)\subset\rL$.

\begin{lem}
\label{lemma 002}
Let $P\in\mathrm{Rep}_k(\rG)_{[\rM,\pi]}$, and $\rL$ be a Levi subgroup of $\rG$. Then $r_{\rL}^{\rG}P\in\bigcup_{w\in\rG,w(\rM)\subset\rL}\mathrm{Rep}_k(\rG)_{[w(\rM),w(\pi)]_{\rL}}$.
\end{lem}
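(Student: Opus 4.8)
The plan is to decompose $r^{\rG}_{\rL}P$ into its components under the block decomposition of $\mathrm{Rep}_k(\rL)$ (which holds since $\rL$ is a Levi subgroup of $\mathrm{GL}_n(F)$, so is itself a product of general linear groups), and to identify which blocks of $\rL$ can possibly appear. Concretely, write $r^{\rG}_{\rL}P = \bigoplus_{[\rN,\sigma]_{\rL}} (r^{\rG}_{\rL}P)_{[\rN,\sigma]_{\rL}}$ over $\rL$-inertially equivalent supercuspidal classes of $\rL$. I want to show that any $[\rN,\sigma]_{\rL}$ appearing in this sum has the form $[w(\rM),w(\pi)]_{\rL}$ for some $w\in\rG$ with $w(\rM)\subset\rL$.

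First I would reduce to the case where $P$ is irreducible: both $r^{\rG}_{\rL}$ and the block projection functors are exact, so it suffices to prove the claim when $P$ is an irreducible object of $\mathrm{Rep}_k(\rG)_{[\rM,\pi]}$, and then pass to irreducible subquotients of $r^{\rG}_{\rL}P$. So let $\rho$ be an irreducible subquotient of $r^{\rG}_{\rL}P$; I must show its supercuspidal support (as an $\rL$-representation) lies in some $[w(\rM),w(\pi)]_{\rL}$. Since $P$ is irreducible with supercuspidal support in $[\rM,\pi]$, there is a supercuspidal pair $(\rM_0,\pi_0)$ in this inertial class with $P\hookrightarrow i^{\rG}_{\rM_0}\pi_0$; by transitivity of parabolic restriction together with the geometric lemma (Bernstein--Zelevinsky / the Mackey-type formula for $r^{\rG}_{\rL}\circ i^{\rG}_{\rM_0}$, valid for $k$-coefficients by \cite[\S II]{V1}), the representation $r^{\rG}_{\rL}i^{\rG}_{\rM_0}\pi_0$ has a filtration whose subquotients are of the form $i^{\rL}_{w(\rM_0)\cap\rL}\,(w\pi_0$ restricted suitably$)$ for double-coset representatives $w$ with $w(\rM_0)\cap\rL$ a Levi of $\rL$; in our situation $\rM_0$ (being inertially equivalent to $\rM$, hence conjugate to $\rM$) can be chosen with $w(\rM_0)\subset\rL$ for the relevant $w$, so each such subquotient is already parabolically induced from a supercuspidal of the conjugate Levi $w(\rM_0)$. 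Hence the supercuspidal support of $\rho$, which by uniqueness of supercuspidal support (\cite[\S V.4]{V2} for $\mathrm{GL}$) is read off from any such presentation, is $\rL$-conjugate into $[w(\rM),w(\pi)]_{\rL}$ for a suitable $w\in\rG$.

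The main obstacle is bookkeeping: making sure the double coset representatives $w$ arising in the geometric lemma can be normalized so that $w(\rM)$ is an honest Levi subgroup contained in $\rL$ (rather than just $w(\rM)\cap\rL$), and that the twist by an unramified quasicharacter picked up in parabolic restriction stays within the inertial class $[w(\rM),w(\pi)]_{\rL}$. This is where I would be careful: the Weyl group identification between $\rG$ and a maximal Levi and the fact that $\rM$ is conjugate to $w(\rM)$ for all $w\in\rG$ stabilizing a common maximal torus let one replace $w(\rM_0)\cap\rL$ by a genuine $\rG$-conjugate $w(\rM)$ sitting inside $\rL$, and the unramified twists are absorbed precisely because inertial equivalence is defined modulo such twists. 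Once this normalization is in place, the inclusion $r^{\rG}_{\rL}P\in\bigcup_{w\in\rG,\,w(\rM)\subset\rL}\mathrm{Rep}_k(\rL)_{[w(\rM),w(\pi)]_{\rL}}$ follows by collecting the (finitely many) inertial classes that occur and invoking the block decomposition of $\mathrm{Rep}_k(\rL)$.
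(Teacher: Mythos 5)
Your core computation (geometric lemma for $r_{\rL}^{\rG}\circ i_{\rM_0}^{\rG}$, cuspidality of $\pi_0$ killing all terms except those with $w(\rM_0)\subset\rL$, then uniqueness of supercuspidal support for products of general linear groups, with unramified twists absorbed into inertial equivalence) is correct for an \emph{irreducible} $P$, and is a genuinely different route from the paper. The problem is the very first reduction: ``both $r_{\rL}^{\rG}$ and the block projection functors are exact, so it suffices to prove the claim when $P$ is irreducible.'' An object of the block $\mathrm{Rep}_k(\rG)_{[\rM,\pi]}$ need not have finite length (the lemma is applied in the paper to objects such as the induced projectives, which are of infinite length), so $P$ has no composition series, and exactness alone does not imply that an irreducible subquotient of $r_{\rL}^{\rG}P$ is a subquotient of $r_{\rL}^{\rG}\Pi$ for some irreducible subquotient $\Pi$ of $P$. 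This is exactly the step the paper is careful to avoid: there one takes an irreducible subquotient of $r_{\rL}^{\rG}P$ with cuspidal support $(\rN,\tau)$, uses the projective object $\cP_{[\rN,\tau]}$ (which surjects onto any irreducible with that cuspidal support) to produce a nonzero map $\cP_{[\rN,\tau]}\rightarrow r_{\rL}^{\rG}P$, and then applies Bernstein's second adjunction to get a nonzero map $\overline{i_{\rL}^{\rG}}\cP_{[\rN,\tau]}\rightarrow P$; since $\delta_{\rL}$ is unramified, $\overline{i_{\rL}^{\rG}}\cP_{[\rN,\tau]}$ lies in the same blocks as $i_{\rL}^{\rG}\cP_{[\rN,\tau]}$, and comparing with the block $[\rM,\pi]$ containing $P$ pins down the supercuspidal support of $\tau$. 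No passage through irreducible objects of the block is needed.

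Your argument can be repaired without abandoning the geometric lemma, but not by the stated reduction: replace ``irreducible $P$'' by the projective generator $\cP_{[\rM,\pi]}=i_{\rM}^{\rG}\ind_{J_{\rM}}^{\rM}\cP_{\lambda_{\rM}}$ of the block. Every object of $\mathrm{Rep}_k(\rG)_{[\rM,\pi]}$ is a quotient of a direct sum of copies of $\cP_{[\rM,\pi]}$, and irreducible subquotients of a quotient of $r_{\rL}^{\rG}(\oplus\cP_{[\rM,\pi]})$ are subquotients of $r_{\rL}^{\rG}\cP_{[\rM,\pi]}$; to the latter your geometric-lemma analysis applies directly (now with the cuspidally induced representation $\ind_{J_{\rM}}^{\rM}\cP_{\lambda_{\rM}}$ in place of $\pi_0$, whose irreducible subquotients are cuspidal with the prescribed supercuspidal support). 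A further small slip: for mod-$\ell$ coefficients the supercuspidal support only realises an irreducible $P$ as a \emph{subquotient} of $i_{\rM_0}^{\rG}\pi_0$, not a subrepresentation; this is harmless for your argument, since exactness of $r_{\rL}^{\rG}$ lets you work with subquotients throughout, but the embedding $P\hookrightarrow i_{\rM_0}^{\rG}\pi_0$ as written is not justified.
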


\begin{proof}
Suppose $\Pi$ is an irreducible sub-quotient of $r_{\rL}^{\rG}P$, whose cuspidal support is $(\rN,\tau)$, where $\rN$ is a Levi subgroup of $\rL$ and $\tau$ is a cuspidal representation of $\rN$. Let $\mathcal{P}_{[\rN,\tau]}$ be the projective object defined from the maximal simple $k$-type of $\tau$, then there is a non-trivial morphism $\mathcal{P}_{[\rN,\tau]}\rightarrow r_{\rL}^{\rG}P$. By the second adjonction of Bernstein, we have a non-trivial morphism from $\overline{i_{\rL}^{\rG}}\mathcal{P}_{[\rN,\tau]}$ to $P$, where $\overline{i_{\rL}^{\rG}}$ is the opposite normalised parabolic induction from $\rL$ to $\rG$. Since the module character $\delta_{\rL}$ is an unramified character on $\rL$, the $k$-representation $\overline{i_{\rL}^{\rG}}\mathcal{P}_{[\rN,\tau]}$ belongs to the same block as $i_{\rL}^{\rG}\mathcal{P}_{[\rN,\tau]}$, which implies that the supercuspidal support of $\tau$ belongs to the union $\cup_{w\in\rG,w(\rM)\subset\rL}(w(\rM),w(\pi))$. We finish the proof.
\end{proof}

\begin{lem}
\label{lemma 003}
Let $P\in\mathrm{Rep}_k(\rG)_{\mathcal{A}}$, and $\tau'$ be an irreducible subquotient of $P\vert_{\rG'}$, then the supercuspidal support of $\tau'$ belongs to $\mathcal{A}'$.
\end{lem}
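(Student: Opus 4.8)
The plan is to reduce the statement about $P|_{\rG'}$ to the block-by-block situation over $\rG$ and then apply Lemma~\ref{lemma 002}. First I would note that, by definition of $\mathrm{Rep}_k(\rG)_{\mathcal{A}}$ as the union of the blocks $\mathrm{Rep}_k(\rG)_{[\rM,\pi]}$ with $[\rM,\pi]\in\mathcal{A}$, it suffices to treat a single block: write $P=\bigoplus_{[\rM,\pi]\in\mathcal{A}}P_{[\rM,\pi]}$ with $P_{[\rM,\pi]}\in\mathrm{Rep}_k(\rG)_{[\rM,\pi]}$, and observe that $P|_{\rG'}=\bigoplus P_{[\rM,\pi]}|_{\rG'}$, so any irreducible subquotient $\tau'$ of $P|_{\rG'}$ is an irreducible subquotient of some $P_{[\rM,\pi]}|_{\rG'}$ with $[\rM,\pi]\in\mathcal{A}$. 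Thus we may assume $P\in\mathrm{Rep}_k(\rG)_{[\rM,\pi]}$ for a fixed $[\rM,\pi]\in\mathcal{A}$, and we must show that the supercuspidal support of $\tau'$ lies in $\mathcal{A}'$, i.e.\ is of the form $[\rN',\tau_0']$ with $\rN'=\rN\cap\rG'$ and $\tau_0'\hookrightarrow\tau_0|_{\rN'}$ for some supercuspidal pair $(\rN,\tau_0)$ of $\rG$ with $[\rN,\tau_0]_{\rG}\in\mathcal{A}$ (equivalently, $(\rN,\tau_0)$ is $\rG$-inertially equivalent to $(w(\rM),w(\pi))$ for a suitable $w$).

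Next I would compute the supercuspidal support of $\tau'$ by relating it to a parabolic restriction of $P$. Let $(\rN',\tau_0')$ be a supercuspidal pair of $\rG'$ in the supercuspidal support of $\tau'$, so $\tau'$ is a subquotient of $i_{\rN'}^{\rG'}\tau_0'$; choosing a supercuspidal pair $(\rN,\tau_0)$ of $\rG$ with $\rN'=\rN\cap\rG'$ and $\tau_0'\hookrightarrow\tau_0|_{\rN'}$ (possible by the structure theory recalled in Section~\ref{section 1.0}), the representation $\tau_0$ appears in $r_{\rN}^{\rG}\rho$ for some irreducible subquotient $\rho$ of $P$ — indeed $\tau'$ occurs in some $\rho|_{\rG'}$, and a standard Mackey/adjunction argument (together with the fact that parabolic restriction on $\rG$ restricts to parabolic restriction on $\rG'$, i.e.\ $(r_{\rN}^{\rG}\rho)|_{\rN'}\cong r_{\rN'}^{\rG'}(\rho|_{\rG'})$) shows that $\tau_0'$ occurs in $(r_{\rN}^{\rG}\rho)|_{\rN'}$, forcing some twist of $\tau_0$ by an unramified character of $\rN$ to occur in $r_{\rN}^{\rG}\rho$; absorbing that twist into $\tau_0$ (it does not change the $\rG$-inertial class), we get $\tau_0$ in the supercuspidal support of $\rho$.

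Now apply Lemma~\ref{lemma 002}: since $\rho$ is an irreducible subquotient of $P\in\mathrm{Rep}_k(\rG)_{[\rM,\pi]}$, it lies in $\mathrm{Rep}_k(\rG)_{[\rM,\pi]}$, hence $r_{\rN}^{\rG}\rho\in\bigcup_{w\in\rG,\,w(\rM)\subset\rN}\mathrm{Rep}_k(\rG)_{[w(\rM),w(\pi)]_{\rN}}$, and in particular the supercuspidal support of $\tau_0$ — which appears in $r_{\rN}^{\rG}\rho$ — is $\rN$-inertially equivalent (hence a fortiori $\rG$-inertially equivalent) to $(w(\rM),w(\pi))$ for some $w$ with $w(\rM)\subset\rN$. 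Therefore $[\rN,\tau_0]_{\rG}\in\mathcal{A}$, which by the defining property of $\mathcal{A}'$ gives $[\rN',\tau_0']\in\mathcal{A}'$, i.e.\ the supercuspidal support of $\tau'$ belongs to $\mathcal{A}'$, as desired.

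The main obstacle I anticipate is the compatibility step: justifying cleanly that an irreducible constituent $\tau_0'$ of the supercuspidal support of $\tau'$ arises, after an unramified twist, from the supercuspidal support of an irreducible constituent $\rho$ of $P$ over $\rG$. This requires combining the uniqueness of supercuspidal support for $\rG'$ (recalled in Section~\ref{section 002}, from \cite{C2}), the identity $r_{\rN'}^{\rG'}\circ\res_{\rG'}^{\rG}\cong\res_{\rN'}^{\rN}\circ\, r_{\rN}^{\rG}$, and the fact that the irreducible constituents of $\rho|_{\rG'}$ all have supercuspidal supports obtained by restriction from that of $\rho$ up to $\rG'$-conjugacy and unramified twist; these are exactly the kinds of facts packaged in the cuspidal-support theory for $\rM'$, and the only care needed is to track that unramified twists on $\rN$ restrict to unramified twists on $\rN'$ and so do not leave the relevant inertial classes.
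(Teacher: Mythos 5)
There is a genuine gap, and it sits exactly at the step you flag as the ``main obstacle''. You take a supercuspidal pair $(\rN',\tau_0')$ in the supercuspidal support of $\tau'$ and claim that a Mackey/adjunction argument, via $r_{\rN'}^{\rG'}(\rho\vert_{\rG'})\cong(r_{\rN}^{\rG}\rho)\vert_{\rN'}$, shows $\tau_0'$ occurs in $(r_{\rN}^{\rG}\rho)\vert_{\rN'}$. In the modular setting this is false: being a subquotient of $i_{\rN'}^{\rG'}\tau_0'$ does not imply that $\tau_0'$ occurs in $r_{\rN'}^{\rG'}\tau'$. Parabolic restriction detects the \emph{cuspidal} support, not the \emph{supercuspidal} support, and for $\ell>0$ these differ: there are cuspidal non-supercuspidal representations $\tau'$, for which $r_{\rN'}^{\rG'}\tau'=0$ for every proper Levi $\rN'$ even though the supercuspidal support lives on a proper Levi. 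So your argument proves at best that the \emph{cuspidal} support of $\tau'$ is controlled by $\mathcal{A}$ via Lemma \ref{lemma 002}; it says nothing about the supercuspidal support of a cuspidal (non-supercuspidal) constituent, which is precisely the hard case. (Your reduction to a single block, and the use of Lemma \ref{lemma 002} for the non-cuspidal part, are fine and in the spirit of the paper.)

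The paper closes this gap with a type-theoretic argument that your proposal has no substitute for. In the cuspidal case it picks a maximal simple $k$-type $(J,\lambda)$ of $\rG$ with a component $(J',\lambda')$ of $\lambda\vert_{\rG'}$ inside $\tau'$, shows (up to a character twist, by Lemma 2.14 of \cite{C1}) that $\lambda$ occurs in $P\vert_J$, hence gets a nonzero map $\cP_{\lambda}\rightarrow P\vert_J$ from the projective cover; this forces every irreducible cuspidal $\tau$ of $\rG$ containing $(J,\lambda)$ to have supercuspidal support in $\mathcal{A}$, and then the compatibility of supercuspidal supports under restriction from $\rG$ to $\rG'$ (Proposition 4.4 of \cite{C2}) transfers this to $\tau'$. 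The non-cuspidal case is then reduced to the cuspidal one through the cuspidal support and Lemma \ref{lemma 002}, exactly because only the cuspidal support is visible to $r_{\rL}^{\rG}$. To repair your proof you would need to insert this (or an equivalent) mechanism relating the supercuspidal support of a cuspidal constituent of $P\vert_{\rG'}$ back to $\mathcal{A}$; the adjunction formalism alone cannot do it.
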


\begin{proof}
Suppose firstly that $\tau'$ is cuspidal, then there exists a maximal simple $k$-type $(J,\lambda)$ of $\rG$, such that an irreducible component $(J',\lambda')$ of $\lambda\vert_{\rG'}$ is contained in $\tau'$ as a subrepresentation. By \cite[Lemma 2.14]{C1}, up to twist a $k$-character of $F^{\times}$, we can assume that $\lambda$ is a subquotient of $P\vert_J$. Hence there is a non-trivial morphism from the projective cover $\mathcal{P}_{\lambda}$ of $\lambda$ to $P\vert_J$, which   implies that for any irreducible cuspidal $k$-representation $\tau$ of $\rG$ which contains $(J,\lambda)$, its supercuspidal support must belong to $\mathcal{A}$. In particular, we can choose $\tau$ such that $\tau'\hookrightarrow \tau\vert_{\rG'}$, hence by \cite[Proposition 4.4]{C2} we know the supercuspidal support of $\tau'$ must belong to $\mathcal{A}'$.

Now suppose $\tau'$ is not cuspidal. Let $(\rL',\rho')$ belong to its cuspidal support. The $\rho'$ appears as a subquotient of $r_{\rL'}^{\rG'}P\vert_{\rG'}\cong (r_{\rL}^{\rG}P)\vert_{\rL'}$.  By Lemma \ref{lemma 002}, and the previous paragraph, we know that the supercuspidal support of $\rho'$ must belong to $\mathcal{A}'_{\rL}$, from which we deduces the desired property of supercuspidal support of $\tau$.
\end{proof}

\begin{lem}
\label{lemma 005}
Let $\pi$ and $\pi'$ be cuspidal $k$-representation of $\rM$ and $\rM'$ respectively and $\pi'\hookrightarrow\pi\vert_{\rM'}$. Let $(J_{\rM},\lambda_{\rM})$ be a maximal simple $k$-type of $\pi$ and $(\tilde{J}_{\rM}',\tilde{\lambda}_{\rM}')$ be a maximal simple $k$-type of $\pi'$ defined from $(J_{\rM},\lambda_{\rM})$. Suppose $[\rL,\tau]$ is the supercuspidal support of $[\rM,\pi]$, then we have
$$\ind_{\rG'}^{\rG}\mathcal{P}_{[\rM',\pi']}\in\prod_{\chi\in(\mathcal{O}_F^{\times})^{\vee}}\mathrm{Rep}_k(\rG)_{[\rL,\tau\otimes\chi]}.$$
\end{lem}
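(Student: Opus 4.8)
\textbf{Proof plan for Lemma \ref{lemma 005}.}

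The plan is to trace the representation $\mathcal{P}_{[\rM',\pi']}$ through the induction $\ind_{\rG'}^{\rG}$ and identify, block by block, where its irreducible subquotients land. First I would unwind the definitions: $\mathcal{P}_{[\rM',\pi']} = i_{\rM'}^{\rG'}\ind_{\tilde{J}_{\rM}'}^{\rM'}\mathcal{P}_{\tilde{\lambda}_{\rM}'}$, and by the projection formula together with transitivity of compact induction, $\ind_{\rG'}^{\rG} i_{\rM'}^{\rG'}(\cdot)$ should be expressible via $i_{\rM}^{\rG}$ applied to $\ind_{\rM'}^{\rM}$ of the same object (up to the usual parabolic bookkeeping, as in the displayed isomorphism in the proof of Lemma \ref{lemma 001} read in the opposite direction). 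So the first key step is the identity
$$\ind_{\rG'}^{\rG}\mathcal{P}_{[\rM',\pi']}\cong i_{\rM}^{\rG}\ind_{\rM'}^{\rM}\ind_{\tilde{J}_{\rM}'}^{\rM'}\mathcal{P}_{\tilde{\lambda}_{\rM}'}\cong i_{\rM}^{\rG}\ind_{\tilde{J}_{\rM}'}^{\rM}\mathcal{P}_{\tilde{\lambda}_{\rM}'},$$
using transitivity of compact induction through $\tilde{J}_{\rM}'\subset\rM'\subset\rM$. Since $i_{\rM}^{\rG}$ sends $\mathrm{Rep}_k(\rM)$-blocks into $\mathrm{Rep}_k(\rG)$-blocks (parabolic induction respects the block decomposition of $\mathrm{GL}$, by \cite{Helm}), it suffices to locate $\ind_{\tilde{J}_{\rM}'}^{\rM}\mathcal{P}_{\tilde{\lambda}_{\rM}'}$ inside the block decomposition of $\mathrm{Rep}_k(\rM)$.

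Next I would compare $\ind_{\tilde{J}_{\rM}'}^{\rM}\mathcal{P}_{\tilde{\lambda}_{\rM}'}$ with $\ind_{J_{\rM}}^{\rM}\mathcal{P}_{\lambda_{\rM}}$. By Lemma \ref{lemma 001} and its proof, $\mathcal{P}_{\tilde{\lambda}_{\rM}'}$ is a direct summand of $(\ind_{J_{\rM}}^{\tilde{J}_{\rM}}\mathcal{P}_{\lambda_{\rM}})\vert_{\tilde{J}_{\rM}'}$; inducing up to $\rM$ and using $\tilde{J}_{\rM}=\tilde{J}_{\rM}'J_{\rM}$ together with Mackey, $\ind_{\tilde{J}_{\rM}'}^{\rM}\mathcal{P}_{\tilde{\lambda}_{\rM}'}$ is a subquotient (indeed a summand) of a finite direct sum of $\alpha$-conjugates of $\ind_{J_{\rM}}^{\rM}\mathcal{P}_{\lambda_{\rM}}$ for $\alpha\in\tilde{J}_{\rM}$, and by Lemma \ref{lemma 004}(1) each such conjugate is $\ind_{J_{\rM}}^{\rM}(\mathcal{P}_{\lambda_{\rM}}\otimes\theta)$ for some $\theta\in\det(J_{\rM})^{\wedge}$, which differs from $\mathcal{P}_{[\rM,\pi]}=i_{\rM}^{\rG}\ind_{J_{\rM}}^{\rM}\mathcal{P}_{\lambda_{\rM}}$ only by an unramified twist at the level of $F^\times$ (characters of $\det(J_{\rM})$ extend to unramified characters of $\rM$ up to enlarging the torus, so the twist lies in $(\mathcal{O}_F^\times)^\wedge$). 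Now invoke Remark \ref{rem 0014}: when $(J_{\rM},\lambda_{\rM})$ is maximal simple supercuspidal, every irreducible subquotient of $\mathcal{P}_{\lambda_{\rM}}$ is isomorphic to $\lambda_{\rM}$ itself, hence every irreducible subquotient of $\ind_{J_{\rM}}^{\rM}\mathcal{P}_{\lambda_{\rM}}$ is a cuspidal $k$-representation containing $(J_{\rM},\lambda_{\rM})$, so its supercuspidal support is some unramified twist of $[\rM,\pi]$ — and the supercuspidal support of $[\rM,\pi]$ is $[\rL,\tau]$ by hypothesis, so we land in $\mathrm{Rep}_k(\rG)_{[\rL,\tau\otimes\chi]}$ for $\chi\in(\mathcal{O}_F^\times)^\wedge$. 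Assembling the conjugates and twists over the (finite) relevant set, one concludes that $\ind_{\rG'}^{\rG}\mathcal{P}_{[\rM',\pi']}$ has all its irreducible subquotients in $\bigcup_{\chi}\mathrm{Rep}_k(\rG)_{[\rL,\tau\otimes\chi]}$, which is exactly the asserted membership in the product category.

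The main obstacle I anticipate is bookkeeping the twist at the level of the correct group: passing from a character $\theta$ of $\det(J_{\rM})$ (equivalently of a compact-open part of $F^\times$) to an unramified twist on $\rM$ and checking it really lies in $(\mathcal{O}_F^\times)^\wedge$ after reduction modulo the lattice of genuinely unramified characters, and making sure the $\tilde{J}_{\rM}$-conjugates produced by Mackey do not escape this family — this is where Lemma \ref{lemma 004} and the structure $\tilde{J}_{\rM}=\tilde{J}_{\rM}'J_{\rM}$ do the real work. A secondary technical point is justifying the first displayed isomorphism rigorously (commuting $\ind_{\rG'}^{\rG}$ past $i_{\rM'}^{\rG'}$), which is the standard Mackey/projection-formula computation already used in Lemma \ref{lemma 001} and should be cited rather than redone.
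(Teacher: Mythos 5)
Your overall route is the same as the paper's: commute $\ind_{\rG'}^{\rG}$ past $i_{\rM'}^{\rG'}$ (using $\delta_{\rM'}=\delta_{\rM}\vert_{\rM'}$), reduce to locating $\ind_{\tilde{J}_{\rM}'}^{\rM}\cP_{\tilde{\lambda}_{\rM}'}$ inside $\mathrm{Rep}_k(\rM)$, and control it through $\cP_{\tilde{\lambda}_{\rM}}\otimes\ind_{\tilde{J}_{\rM}'}^{\tilde{J}_{\rM}}\mathds{1}$ and the twists of Lemma \ref{lemma 004}. But there is a genuine gap in the final, decisive step. The lemma is stated for $\pi$ \emph{cuspidal}, not supercuspidal: the supercuspidal support $[\rL,\tau]$ may live on a proper Levi $\rL\subsetneq\rM$, and this is exactly the generality in which the lemma is used later (in Proposition \ref{prop 007} and in the construction of $\mathcal{I}_{(\rL,\tau)}$, which runs over cuspidal classes whose supercuspidal support lies in $[\rL,\tau]^{tw}$). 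Your argument invokes Remark \ref{rem 0014}, which applies \emph{only} when $(J_{\rM},\lambda_{\rM})$ is a maximal simple \emph{supercuspidal} type; in the merely cuspidal case $\cP_{\lambda_{\rM}}\cong\cP_{\sigma_{\rM}}\otimes\kappa_{\rM}$ has irreducible subquotients $\kappa_{\rM}\otimes\sigma_0$ with $\sigma_0\not\cong\sigma_{\rM}$ (and $\sigma_0$ possibly non-cuspidal), so your claims that every irreducible subquotient of $\ind_{J_{\rM}}^{\rM}\cP_{\lambda_{\rM}}$ is cuspidal and contains $(J_{\rM},\lambda_{\rM})$, and that its supercuspidal support is "an unramified twist of $[\rM,\pi]$", both fail; indeed the latter phrase only makes sense when $\rL=\rM$. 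What is missing is the paper's actual mechanism: one uses that the irreducible subquotients of $\cP_{\sigma_{\rM}}$ (a projective cover at the finite-group level) all have the same supercuspidal support as $\sigma_{\rM}$, writes each isotypic piece $P_\theta$ as $\Pi_\theta\otimes\kappa\otimes\theta$ with $\Pi_\theta$ inflated from $J_{\rM}/J_{\rM}^1$, and then invokes a type-theoretic block-membership criterion (\cite[Theorem 9.6]{SeSt}) to conclude $\ind_{J_{\rM}}^{\rM}P_\theta\in\prod_{\chi\in(\mathfrak{o}_F^\times)^\wedge}\mathrm{Rep}_k(\rM)_{[\rL,\tau\otimes\chi]}$ before applying $i_{\rM}^{\rG}$. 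Without this (or an equivalent semisimple-type argument), your proof only covers the supercuspidal case of the lemma.

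Two smaller points. First, in the modular setting $\ind_{\tilde{J}_{\rM}'}^{\tilde{J}_{\rM}}\mathds{1}$ need not be semisimple, so "direct sum of $\alpha$-conjugates" should be weakened to a filtration whose graded pieces are $\ind_{J_{\rM}}^{\rM}(\cP_{\lambda_{\rM}}\otimes\theta)$; this is harmless for your purpose since membership in $\prod_{\chi}\mathrm{Rep}_k(\rG)_{[\rL,\tau\otimes\chi]}$ is detected on irreducible subquotients, but it should be said. Second, the bookkeeping you flag as the "main obstacle" (extending $\theta\in\det(J_{\rM})^{\wedge}$ to a quasicharacter of $F^{\times}$ and checking only the ramified part matters) is indeed handled in the paper exactly as you anticipate, so that part of your plan is sound.
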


\begin{proof}
We denote by $\mathcal{P}'=\mathcal{P}_{[\rM',\pi']}$, $\cP_{\rM'}'=\ind_{\tilde{J}_{\rM'}}^{\rM'}\cP_{\tilde{\lambda}_{\rM}'}$ and $\cP_{\tilde{\lambda}_{\rM}}=\ind_{J_{\rM}}^{\tilde{J}_{\rM}}\cP_{\lambda_{\rM}}.$is this proof. Recall that $\mathcal{P}'=i_{\rM'}^{\rG'}\ind_{\tilde{J}_{\rM}'}^{\rM'}\tilde{\cP}_{\tilde{\lambda}_{\rM}'}$. Since the module character $\delta_{\rM'}=\delta_{\rM}\vert_{\rM'}$, we have
$$\ind_{\rG'}^{\rG}\cP'\cong i_{\rM}^{\rG}\ind_{\rM'}^{\rM}\cP_{\rM'}'\hookrightarrow i_{\rM}^{\rG}\ind_{\tilde{J}_{\rM}}^{\rM}(\cP_{\tilde{\lambda}_{\rM}}\otimes\ind_{\tilde{J}_{\rM}'}^{\tilde{J}_{\rM}}\mathds{1}),$$
where 
\begin{equation}
\label{equation 001}
\begin{aligned}
\res_{J_{\rM}^1}^{\tilde{J}_{\rM}}(\cP_{\tilde{\lambda}_{\rM}}\otimes\ind_{\tilde{J}_{\rM}'}^{\tilde{J}_{\rM}}\mathds{1})&=\res_{J_{\rM}^1}^{\tilde{J}_{\rM}}\ind_{J_{\rM}}^{\tilde{J}_{\rM}}\cP_{\lambda_{\rM}}\otimes\res_{J_{\rM}^1}^{\tilde{J}_{\rM}}\ind_{\tilde{J}_{\rM}'}^{\tilde{J}_{\rM}}\mathds{1}\\
&=\oplus_{\alpha\in \tilde{J}_{\rM}\backslash J_{\rM}}\res_{J_{\rM}^1}^{J_{\rM}}\alpha(\cP_{\lambda_{\rM}})\otimes\oplus_{\tilde{J}_{\rM}'\backslash\tilde{J}_{\rM}\slash J_{\rM}^1}\ind_{J_{\rM}^1\cap\tilde{J}_{\rM}'}^{J_{\rM}^1}\mathds{1}.
\end{aligned}
\end{equation}
Since $J_{\rM}^1$ is a pro-$p$ group, and by the definition of $\tilde{J}_{\rM}$ the above representation is semisimple whose direct components are in the form of $\eta\otimes\theta$, where $\eta$ is the Heisenberg representation of the simple character of $\lambda_{\rM}$, and $\theta\in(\det(J_{\rM}^1))^{\vee}$ which can be extend to an character of $(F^{\times})^{\wedge}$ and we fix one of such extension by denoting it as $\theta$ as well. Hence we have the decomposition
\begin{equation}
\label{equation 002}
\res_{J_{\rM}}^{\tilde{J}_{\rM}}(\cP_{\tilde{\lambda}_{\rM}}\otimes\ind_{\tilde{J}_{\rM}'}^{\tilde{J}_{\rM}}\mathds{1})\cong\oplus_{\theta\in(\det(H_{\rM}^1))^{\wedge}}P_{\theta},
\end{equation}
where $P_{\theta}$ is the $\eta\otimes\theta$-isogeny subrepresentation. Noticing that we require $\theta$ is non-trivial on $H_{\rM}^1$, because otherwise $\eta\cong\eta\otimes\theta$. By a similar computation as in Equation (\ref{equation 001}), we have
$$\res_{J_{\rM}}^{\tilde{J}_{\rM}}(\cP_{\tilde{\lambda}_{\rM}}\otimes\ind_{\tilde{J}_{\rM}'}^{\tilde{J}_{\rM}}\mathds{1})\cong\oplus_{\alpha\in\tilde{J}_{\rM}\slash J_{\rM}}\alpha(\cP_{\lambda_{\rM}})\otimes\oplus_{\rho\in(\det(J_{\rM}))^{\wedge}}\rho\otimes\ind_{J_{\rM,\ell'}}^{J_{\rM}}\mathds{1},$$
where $J_{\rM,\ell}$ is the subgroup of $J_{\rM}$ consisting with the elements whose determinant belong to the $\ell'$-part of $F^{\times}$. By Lemma \ref{lemma 004}, the right hand side of the above equation is isomorphic to $\oplus_{\rho\in(\det(J_{\rM}))^{\wedge}}(\cP_{\lambda_{\rM}}\otimes\rho)^{\tilde{q}}\otimes\ind_{J_{\rM,\ell'}}^{J_{\rM}}\mathds{1}$, where $\tilde{q}$ is the index $[\tilde{J}_{\rM}:J_{\rM}]$ and $(\cdot)^{\tilde{q}}$ is the $\tilde{q}$-multiple of $\cdot$.  Hence $P_{\theta}$ in Equation (\ref{equation 002}) is isomorphic to $\oplus_{\rho}(\cP_{\lambda_{\rM}}\otimes\rho)^{\tilde{q}}\otimes\ind_{J_{\rM,\ell'}}^{J_{\rM}}\mathds{1}$, where $\rho\in(\det{J}_{\rM})^{\wedge},\rho\vert_{H_{\rM}^1}=\theta$. Recall that $\lambda_{\rM}\cong\kappa\otimes\sigma$, where $\sigma$ is inflated from a supercuspidal $k$-representation of $J_{\rM}\slash J_{\rM}^1$, and $\cP_{\lambda}\cong\cP_{\sigma}\otimes\kappa$. Since an irreducible sub-quotient of $P_{\theta}$ is isomorphic to $\kappa\otimes\sigma_0\otimes\rho$, where $\sigma_0$ is inflated from $J_{\rM}\slash J_{\rM}^1$ and its supercuspidal support is the same as that of $\sigma$, and $\rho$ is an character as above. 
Now we fix an extension of $\theta$ to $J_{\rM}$ and denote it again by $\theta$. We have $P_{\theta}\cong \Pi_{\theta}\otimes\kappa\otimes\theta$, where $\Pi_{\theta}$ is inflated from $J_{\rM}\slash J_{\rM}^1$, and the supercuspidal support of each of its irreducible sub-quotient is the same as $\sigma\otimes\overline{\rho}$, where $\overline{\rho}\in\det(J_{\rM}\slash J_{\rM}^1)^{\wedge}$. By \cite[Theorem 9.6]{SeSt}, the induction $\ind_{J_{\rM}}^{\rM}P_{\theta}$ belongs to the sub-category $\prod_{\chi\in (\mathfrak{o}_{F}^{\times})^{\wedge}}\mathrm{Rep}_k(\rM)_{[\rL,\tau\otimes\chi]}$, hence $i_{\rM}^{\rG}P_{\theta}\in\prod_{\chi\in(\mathfrak{o}_{F}^{\times})^{\wedge}}\mathrm{Rep}_k(\rG)_{[\rL,\tau\otimes\chi]}$. Since $\ind_{\rG'}^{\rG}\cP'\cong\oplus_{\theta\in(\det(H_{\rM}^1))^{\wedge}}i_{\rM}^{\rG}\ind_{J_{\rM}}^{\rM}P_{\theta}$, we deduce the desired property.
\end{proof}

\begin{lem}
\label{lemma 006}
Let $\mathcal{A}$ be as above, and $P\in\mathrm{Rep}_k(\rG)_{\mathcal{A}}$. Then $P^{\vee}\in\mathrm{Rep}_k(\rG)_{\mathcal{A}^{\vee}}$, where $\mathcal{A}^{\vee}$ consists of the $\rG$-inertially equivalent supercuspidal classes $[\rM,\pi^{\vee}]$ such that $[\rM,\pi]\in\mathcal{A}$.
\end{lem}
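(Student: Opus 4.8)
The plan is to reduce the statement to irreducible subquotients and to the behaviour of the supercuspidal support under the contragredient. By the block decomposition of $\mathrm{Rep}_k(\rG)$ for $\rG$ a Levi subgroup of $\mathrm{GL}_n(F)$ (\cite{Helm}, \cite{SeSt}), a smooth $k$-representation $Q$ of $\rG$ lies in $\mathrm{Rep}_k(\rG)_{\mathcal{B}}$, for a family $\mathcal{B}$ of $\rG$-inertially equivalent supercuspidal classes, if and only if the supercuspidal support of every irreducible subquotient of $Q$ lies in $\mathcal{B}$. So it is enough to show that each irreducible subquotient of $P^{\vee}$ has supercuspidal support in $\mathcal{A}^{\vee}$.

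The next ingredient is the compatibility of the contragredient with the supercuspidal support for Levi subgroups of $\mathrm{GL}_n(F)$: if $\sigma$ is irreducible with supercuspidal support $[\rL,\tau]$, then $\sigma^{\vee}$ has supercuspidal support $[\rL,\tau^{\vee}]$. Indeed $\sigma$ is a subquotient of $i_{\rL}^{\rG}\tau$, and $(i_{\rL}^{\rG}\tau)^{\vee}\cong\overline{i_{\rL}^{\rG}}(\tau^{\vee})$; since $\delta_{\rL}$ is unramified, $\overline{i_{\rL}^{\rG}}(\tau^{\vee})$ has the same irreducible subquotients as $i_{\rL}^{\rG}(\tau^{\vee})$ (exactly as in the proof of Lemma \ref{lemma 002}), so $\sigma^{\vee}$ is a subquotient of $i_{\rL}^{\rG}(\tau^{\vee})$; as $\tau^{\vee}$ is again supercuspidal and the supercuspidal support is unique, this is its supercuspidal support. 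In particular $[\rM,\pi]\mapsto[\rM,\pi^{\vee}]$ is a well-defined involution of the set of $\rG$-inertially equivalent supercuspidal classes, and $\mathcal{A}^{\vee}$ is the image of $\mathcal{A}$ under it.

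It then remains to see that every irreducible subquotient $\sigma$ of $P^{\vee}$ is of the form $\rho^{\vee}$ with $\rho$ an irreducible subquotient of $P$; combined with the previous paragraph, the supercuspidal support of $\sigma=\rho^{\vee}$ then lies in $\mathcal{A}^{\vee}$, and the lemma follows. The contragredient is an exact contravariant functor on $\mathrm{Rep}_k(\rG)$ — right exactness follows by averaging a $k$-linear splitting over a sufficiently small pro-$p$ open subgroup, whose pro-order is invertible in $k$ because $\ell\neq p$ — and $\sigma^{\vee\vee}\cong\sigma$ for every irreducible $\sigma$ (irreducible smooth $k$-representations being admissible); when $P$ has finite length this immediately gives that the Jordan--Hölder constituents of $P^{\vee}$ are exactly the contragredients of those of $P$.

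The point that requires genuine work — and the one I expect to be the main obstacle — is that the representations to which Lemma \ref{lemma 006} is applied (for instance $\ind_{\rG'}^{\rG}$ of a projective object, or the projective objects $\mathcal{P}_{[\rM,\pi]}$ themselves) are not admissible, and for a non-admissible $P$ dualizing a subrepresentation $W\subseteq P^{\vee}$ with $W\twoheadrightarrow\sigma$ only yields $\sigma^{\vee}\hookrightarrow W^{\vee}$ with $W^{\vee}$ a quotient of $P^{\vee\vee}$; this identifies the irreducible subquotients of $P^{\vee}$ with the contragredients of those of $P^{\vee\vee}$, not of $P$. To close this gap one should use that the block decomposition of $\mathrm{Rep}_k(\rG)$ is a product of abelian categories and interacts with the contragredient through the corresponding idempotent endomorphisms of the identity functor: the contragredient exchanges the idempotent cutting out the block $[\rM,\pi]$ with (the transpose of) the one cutting out $[\rM,\pi^{\vee}]$, by the compatibility with supercuspidal support established above; applying this to $P$, on which the idempotents attached to classes outside $\mathcal{A}$ vanish, forces the idempotents attached to classes outside $\mathcal{A}^{\vee}$ to vanish on $P^{\vee}$, which is the assertion. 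Alternatively one can reduce to the case of a representation concentrated in a single block of $\rG$ and invoke the description of the blocks of $\mathrm{Rep}_k(\rG)$ from \cite{Helm}, \cite{SeSt}.
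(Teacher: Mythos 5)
Your reduction to irreducible subquotients of $P^{\vee}$, and the computation that the contragredient of an irreducible representation with supercuspidal support $[\rL,\tau]$ has supercuspidal support $[\rL,\tau^{\vee}]$, are both correct, and you rightly flag the real obstacle: $P$ is not admissible, so subquotients of $P^{\vee}$ are not a priori contragredients of subquotients of $P$. The problem is that your proposed repair does not close this gap: the assertion that the contragredient exchanges the idempotent of the identity functor cutting out $\mathrm{Rep}_k(\rG)_{[\rM,\pi]}$ with (the transpose of) the one cutting out $\mathrm{Rep}_k(\rG)_{[\rM,\pi^{\vee}]}$, \emph{on arbitrary smooth objects}, is essentially equivalent to the statement of the lemma (restricted to a single block), and it is not implied by the compatibility of supercuspidal support with duals for irreducibles — that compatibility only controls the scalars by which central elements act on irreducible (hence admissible) representations, and transferring it to the transpose action on a non-admissible $P^{\vee}$ is exactly the point at issue. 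Likewise "invoke the description of the blocks from \cite{Helm}, \cite{SeSt}" names no specific statement that yields the claim. So as written the argument is circular at its crucial step.

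The paper closes the gap differently, with a short argument you could adopt: if $\sigma$ is an irreducible subquotient of $P^{\vee}$ with cuspidal support $[\rL_0,\pi_0]$ and supercuspidal support $[\rM_0,\tau_0]$, then Proposition \ref{prop 008} gives a surjection $\cP_{[\rL_0,\pi_0]}\rightarrow\sigma$, and projectivity lifts it to a nonzero map $\cP_{[\rL_0,\pi_0]}\rightarrow P^{\vee}$; the natural isomorphism $\Hom_{\rG}(A,B^{\vee})\cong\Hom_{\rG}(B,A^{\vee})$ then produces a nonzero map $P\rightarrow\cP_{[\rL_0,\pi_0]}^{\vee}$. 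The external input replacing your idempotent claim is \cite[Corollary 11.7]{Helm}: the contragredient of this particular finitely generated projective lies in the single block $\mathrm{Rep}_k(\rG)_{[\rM_0,\tau_0^{\vee}]}$. Since $P\in\mathrm{Rep}_k(\rG)_{\mathcal{A}}$ admits a nonzero map to an object of that block, the Bernstein decomposition (\cite[Theorem 11.8]{Helm}) forces $[\rM_0,\tau_0^{\vee}]\in\mathcal{A}$, i.e. $[\rM_0,\tau_0]\in\mathcal{A}^{\vee}$, which is what you needed. In short: the "genuine work" you anticipated is supplied by Helm's result on the block containing the dual of the projective generator, and some such input (or a proof of it) must appear in your argument for it to be complete.
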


\begin{proof}
Suppose there exists an irreducible sub-quotient $\pi$ of $P^{\vee}$. Denote by $[\rM_0,\tau_0]$ its supercuspidal support and by $[\rL_0,\pi_0]$ is cuspidal support. There is non-trivial morphism $\mathcal{P}_{[\rL_0,\pi_0]}\rightarrow P^{\vee}$, which implies a non-trivial morphism $P\rightarrow \cP_{[\rL_0,\pi_0]}^{\vee}$. Since $\cP_{[\rL_0,\pi_0]}^{\vee}$ belongs to the block $\mathrm{Rep}_k(\rG)_{[\rM_0,\tau_0^{\vee}]}$ by \cite[Corollary 11.7]{Helm}, we have $[\rM_0,\tau_0^{\vee}]\in\mathcal{A}$ by the Bernstein decomposition \cite[Theorem 11.8]{Helm}. 

\end{proof}

\begin{prop}
\label{prop 007}
We take the same notations as in Lemma \ref{lemma 005}. Let $\rho'$ be an irreducible subquotient of the contragredient $\mathcal{P}_{[\rM',\pi']}^{\vee}$, then the supercuspidal support of $\rho'$ is contained in union of $\rG$-conjugation classes of $[\rL',\tau'^{\vee}]$. In other words, let $\tau\vert_{\rL'}=\oplus_{i\in I}\tau_i'$, then the supercuspidal support of $\rho'$ is contained in $\bigcup_{i\in I}[\rL',\tau_i'^{\vee}]$.
\end{prop}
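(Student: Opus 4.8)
The plan is to combine Lemma \ref{lemma 005} with the duality statement of Lemma \ref{lemma 006}, applied to the group $\rG$ (here a Levi of $\GL_n$) and to the family $\mathcal{A}$ built from the supercuspidal support of $[\rM,\pi]$. First I would recall from Lemma \ref{lemma 005} that, with the notation fixed there, $\ind_{\rG'}^{\rG}\mathcal{P}_{[\rM',\pi']}$ lies in $\prod_{\chi\in(\mathfrak{o}_F^{\times})^{\wedge}}\mathrm{Rep}_k(\rG)_{[\rL,\tau\otimes\chi]}$; call this family of $\rG$-inertially equivalent supercuspidal classes $\mathcal{A}$. The object $\mathcal{P}_{[\rM',\pi']}^{\vee}$ is a direct summand of $(\ind_{\rG'}^{\rG}\mathcal{P}_{[\rM',\pi']})^{\vee}\vert_{\rG'}$, because $\mathcal{P}_{[\rM',\pi']}$ is a direct summand of $(\ind_{\rG'}^{\rG}\mathcal{P}_{[\rM',\pi']})\vert_{\rG'}$ (by Frobenius reciprocity, the counit $\ind_{\rG'}^{\rG}\res_{\rG'}^{\rG}\to\mathrm{id}$ and unit split since $[\rG:\rG']$-index issues do not arise here — more precisely $\mathcal{P}_{[\rM',\pi']}$ surjects onto, hence is a summand of, the restriction), and dualising is exact and commutes with taking summands.

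Next I would apply Lemma \ref{lemma 006} to $P=\ind_{\rG'}^{\rG}\mathcal{P}_{[\rM',\pi']}\in\mathrm{Rep}_k(\rG)_{\mathcal{A}}$ to conclude $(\ind_{\rG'}^{\rG}\mathcal{P}_{[\rM',\pi']})^{\vee}\in\mathrm{Rep}_k(\rG)_{\mathcal{A}^{\vee}}$, where $\mathcal{A}^{\vee}=\{[\rL,\tau^{\vee}\otimes\chi]:\chi\in(\mathfrak{o}_F^{\times})^{\wedge}\}$ (using that $(\tau\otimes\chi)^{\vee}=\tau^{\vee}\otimes\chi^{-1}$ and $\chi\mapsto\chi^{-1}$ permutes $(\mathfrak{o}_F^{\times})^{\wedge}$). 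Then Lemma \ref{lemma 003}, applied to $\rG$ and the family $\mathcal{A}^{\vee}$, tells us that every irreducible subquotient $\rho'$ of $(\ind_{\rG'}^{\rG}\mathcal{P}_{[\rM',\pi']})^{\vee}\vert_{\rG'}$ — in particular every irreducible subquotient of the summand $\mathcal{P}_{[\rM',\pi']}^{\vee}$ — has supercuspidal support in the associated family $(\mathcal{A}^{\vee})'$ of $\rG'$-inertially equivalent supercuspidal classes. By the defining compatibility of $(\mathcal{A}^{\vee})'$ with $\mathcal{A}^{\vee}$ (a class $[\rL',\sigma']$ lies in $(\mathcal{A}^{\vee})'$ iff $\sigma'\hookrightarrow\tau^{\vee}\otimes\chi\vert_{\rL'}$ for some $\chi$), and since the unramified twist by $\chi\in(\mathfrak{o}_F^{\times})^{\wedge}$ restricted to $\rL'$ becomes trivial up to $\rL'$-inertial equivalence, this family is exactly the set of $\rG$-conjugacy classes of $[\rL',\tau_i'^{\vee}]$ where $\tau^{\vee}\vert_{\rL'}=\oplus_{i\in I}\tau_i'^{\vee}$, i.e. $\tau\vert_{\rL'}=\oplus_{i\in I}\tau_i'$. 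This is the asserted conclusion.

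The main obstacle I anticipate is the bookkeeping in the second paragraph: one must check carefully that passing to contragredients interacts correctly with (i) the unramified-twist ambiguity coming from $(\mathfrak{o}_F^{\times})^{\wedge}$ in Lemma \ref{lemma 005}, and (ii) the restriction from $\rL$ to $\rL'$, so that the components $\tau_i'$ of $\tau\vert_{\rL'}$ and the components of $\tau^{\vee}\vert_{\rL'}$ match up as $\tau_i' \leftrightarrow \tau_i'^{\vee}$ without spurious extra classes. The key point making this work is that $(\ \cdot\ )^{\vee}$ commutes with restriction to $\rL'$ and sends irreducible constituents to irreducible constituents bijectively, together with the fact (already used repeatedly, e.g. in Lemma \ref{lemma 003} via \cite[Proposition 4.4]{C2}) that the supercuspidal support of a constituent $\tau_i'$ of a restriction is computed from the supercuspidal support $[\rL,\tau]$ of the ambient $\GL$-type. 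A secondary, more routine point is justifying that $\mathcal{P}_{[\rM',\pi']}$ is genuinely a direct summand of $(\ind_{\rG'}^{\rG}\mathcal{P}_{[\rM',\pi']})\vert_{\rG'}$; this follows the same mechanism as Lemma \ref{lemma 001} — projectivity plus a surjection onto $\mathcal{P}_{[\rM',\pi']}$, or directly from the fact that $\ind_{\rG'}^{\rG}$ and $\res_{\rG'}^{\rG}$ are biadjoint on smooth representations of a pair of groups differing by a closed finite-index-type subgroup situation, so the unit map splits.
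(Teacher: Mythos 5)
Your overall strategy is the same as the paper's (Lemma \ref{lemma 005} $+$ Lemma \ref{lemma 006} $+$ Lemma \ref{lemma 003}, with the bookkeeping $\mathcal{A}\mapsto\mathcal{A}^{\vee}\mapsto(\mathcal{A}^{\vee})'$, which is correct), but the bridging step has a genuine gap. You claim that $\mathcal{P}_{[\rM',\pi']}^{\vee}$ is a direct summand of $\bigl(\ind_{\rG'}^{\rG}\mathcal{P}_{[\rM',\pi']}\bigr)^{\vee}\big\vert_{\rG'}$ because "dualising commutes with taking summands". The decomposition you dualise is a decomposition of $\rG'$-representations, so its dual is a summand of the $\rG'$-smooth dual $\bigl((\ind_{\rG'}^{\rG}\mathcal{P}')\vert_{\rG'}\bigr)^{\vee}$, not of the restriction of the $\rG$-smooth dual $\bigl((\ind_{\rG'}^{\rG}\mathcal{P}')^{\vee}\bigr)\vert_{\rG'}$. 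These need not coincide: $\rG'$ is closed but not open in $\rG$ (and $\rG/\rG'\cong F^{\times}$ is neither finite nor compact), so a $\rG'$-smooth functional need not be $\rG$-smooth, and the objects involved are far from admissible. The same confusion appears in your appeal to the unit of $\ind\dashv\res$ and to "biadjointness in a finite-index-type situation": since $\rG'$ is not open, extension by zero is not smooth and the unit map does not exist. What is true, and what you do use correctly, is that evaluation at $1$ gives a $\rG'$-equivariant surjection $(\ind_{\rG'}^{\rG}\mathcal{P}')\vert_{\rG'}\rightarrow\mathcal{P}'$ which splits by projectivity of $\mathcal{P}'$; but this splitting sits on the wrong side of duality for the conclusion you want.

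The repair is exactly the paper's move, and it also shows you need less than a summand: since there is no non-trivial character (modulus) on $\rG'$, one has $(\ind_{\rG'}^{\rG}\mathcal{P}')^{\vee}\cong\Ind_{\rG'}^{\rG}(\mathcal{P}'^{\vee})$, and evaluation at $1$ gives a surjection $\res_{\rG'}^{\rG}\Ind_{\rG'}^{\rG}(\mathcal{P}'^{\vee})\rightarrow\mathcal{P}'^{\vee}$. Hence every irreducible subquotient of $\mathcal{P}'^{\vee}$ already occurs as an irreducible subquotient of $\bigl((\ind_{\rG'}^{\rG}\mathcal{P}')^{\vee}\bigr)\vert_{\rG'}$, which is all that Lemma \ref{lemma 003} requires; combined with Lemma \ref{lemma 005} and Lemma \ref{lemma 006} this gives the statement, with the rest of your second paragraph unchanged.
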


\begin{proof}
Let $\cP^{'}$ be $\cP_{[\rM',\pi']}$ in this proof. Since there is no non-trivial character on $\rG'$, we have $(\ind_{\rG'}^{\rG}\cP')^{\vee}\cong\mathrm{Ind}_{\rG'}^{\rG}\cP^{'\vee}$. By Lemma \ref{lemma 005} and Lemma \ref{lemma 006}, we have
$$(\Ind_{\rG'}^{\rG}\cP')^{\vee}\in\prod_{\chi\in(\mathfrak{o}_{F}^{\times})^{\vee}}\mathrm{Rep}_k(\rG)_{[\rL,\tau^{\vee}\otimes\chi]}.$$
By the surjective morphism $\res_{\rG'}^{\rG}\Ind_{\rG'}^{\rG}\cP^{'\vee}\rightarrow\cP^{'\vee}$ and Lemma \ref{lemma 003}, we conclude that the supercuspidal support of an arbitrary irreducible sub-quotient of $\cP^{'\vee}$ belongs to the $\rG$-conjugation of $[\rL',\tau']$.
\end{proof}

\begin{defn}
\label{defn 0035}
Let $(\rL_1,\tau_1)$ and $(\rL_2,\tau_2)$ be cuspidal pairs of $\rG$, we say they are $\rG$-\textbf{twist equivalent}, if there exists $g\in\rG$ such that $g(\rL_1)=\rL_2$ and $g(\tau_1)$ is isomorphic to $\tau_2$ up to a $k$-quasicharacter of $F^{\times}$, which is an equivalence relation and denote by $[\rL_1,\tau_1]^{tw}$ the $\rG$-twist equivalent class defined by $(\rL_1,\tau_1)$.
\end{defn}

Noting that in the above definition, we do not require the $k$-quasicharacter of $F^{\times}$ is unramified, which is different comparing to the relation of $\rG$-inertially equivalence. We define the depth of a $\rG$-twist equivalent class as the minimal depth among all the pairs inside this class. Denote by $\mathcal{C}_{[\rL,\tau]^{tw}}$ the set of $\rG$-twist equivalent cuspidal classes whose supercuspidal support belong to $[\rL,\tau]^{tw}$ up to an isomorphism, and denote by $\mathcal{C}_{\overline{[\rL,\tau]^{tw}}}$ the set of $\rG$-twist equivalent cuspidal classes whose supercuspidal support does not belong to $[\rL,\tau]^{tw}$ up to isomorphism and a twist of a $k$-quasicharacter of $F^{\times}$. It is worth noticing that

\begin{enumerate}
\item $\mathcal{C}_{[\rL,\tau]^{tw}}$ is a finite set;
\item fix a positive number $n\in\mathbb{N}$, there are only finitely many object in $\mathcal{C}_{\overline{[\rL,\tau]^{tw}}}$ whose depth is smaller than $n$.
\end{enumerate}
Define
$$\mathcal{I}_{(\rL,\tau)}=\bigoplus_{[\rM',\pi']\in\mathcal{C}_{[\rL,\tau]^{tw}}'}\cP_{[\rM',\tau^{'\vee}]}^{\vee},$$
$$\mathcal{I}_{\overline{(\rL,\tau)}}=\bigoplus_{[\rM',\pi']\in\mathcal{C}_{\overline{[\rL,\tau]^{tw}}}'}\cP_{[\rM',\tau^{'\vee}]}^{\vee},$$
where the relation between $\mathcal{C}_{[\rL,\tau]^{tw}}$ and $\mathcal{C}_{[\rL,\tau]^{tw}}'$ is as explained in the beginning of Section \ref{section 4}.

\begin{lem}
$\mathcal{I}_{\overline{(\rL,\tau)}}$ is injective.
\end{lem}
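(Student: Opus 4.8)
The plan is to show $\mathcal{I}_{\overline{(\rL,\tau)}}$ is injective by verifying that it is the contragredient of a projective object, since the contragredient of a projective finitely generated (equivalently, admissible-like) object in $\mathrm{Rep}_k(\rG')$ is injective. Concretely, $\mathcal{I}_{\overline{(\rL,\tau)}}=\bigoplus_{[\rM',\pi']\in\mathcal{C}_{\overline{[\rL,\tau]^{tw}}}'}\cP_{[\rM',\tau^{'\vee}]}^{\vee}$, so it suffices to prove that each $\cP_{[\rM',\pi']}$ is projective in $\mathrm{Rep}_k(\rG')$ and that the (possibly infinite) direct sum above behaves well under taking contragredients. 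For the first point I would argue as follows: by definition $\cP_{[\rM',\pi']}=i_{\rM'}^{\rG'}\ind_{\tilde{J}_{\rM}'}^{\rM'}\cP_{\tilde{\lambda}_{\rM}'}$; the representation $\cP_{\tilde{\lambda}_{\rM}'}$ is a projective object in $\mathrm{Rep}_k(\tilde{J}_{\rM}')$ (it is a projective cover), compact induction $\ind_{\tilde{J}_{\rM}'}^{\rM'}$ from an open compact(-mod-centre) subgroup sends projectives to projectives because it is left adjoint to the exact restriction functor, and parabolic induction $i_{\rM'}^{\rG'}$ sends projectives to projectives because, by Bernstein's second adjunction, it is left adjoint to the exact functor $r_{\rM'}^{\rG'}$ (equivalently one can cite that $i_{\rM'}^{\rG'}$ has an exact right adjoint, namely the opposite parabolic restriction). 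Hence $\cP_{[\rM',\pi']}$ is projective.

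Next I would address the passage to contragredients. The key subtlety is that contragredient does not in general exchange projectives and injectives for arbitrary smooth representations, but it does for representations that are direct sums of admissible ones, or more precisely for objects built from compact induction of finite-length types: each $\cP_{[\rM',\pi']}$ is a finitely generated projective whose contragredient is an injective object, and this is exactly the statement already invoked implicitly in Corollary \ref{cor 0010} and in \cite[Corollary 11.7]{Helm} for $\rG$. I would cite the analogous fact over $\rG'$: the contragredient of $\cP_{[\rM',\pi']}$ is injective in $\mathrm{Rep}_k(\rG')$ (this uses that $\cP_{\tilde{\lambda}_{\rM}'}$ is finite-dimensional, that $\ind$ and $i_{\rM'}^{\rG'}$ commute appropriately with contragredient up to twist by the modulus character, which is harmless, and that a finitely generated projective is "co-admissible" in the needed sense). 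Finally, a direct sum of injective objects in $\mathrm{Rep}_k(\rG')$ is again injective: this is true in $\mathrm{Rep}_k(\rG')$ because this category is locally Noetherian (every finitely generated smooth $k$-representation of a $p$-adic group is Noetherian), and over a locally Noetherian category arbitrary direct sums of injectives are injective. So $\mathcal{I}_{\overline{(\rL,\tau)}}$, being a direct sum of injectives, is injective.

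The main obstacle I anticipate is the bookkeeping needed to make the direct sum genuinely a direct sum of injectives rather than merely a product: the index set $\mathcal{C}_{\overline{[\rL,\tau]^{tw}}}'$ is infinite, but by the two bulleted finiteness remarks preceding the lemma (finitely many twist classes of each bounded depth), the summands are "spread out" in depth, so any fixed compact open subgroup sees only finitely many of them nontrivially; this is precisely what guarantees both that the direct sum is smooth and that injectivity is preserved. I would make this precise by checking the lifting criterion directly: given an injection $\rho_1\hookrightarrow\rho_2$ of smooth representations and a map $\rho_1\to\mathcal{I}_{\overline{(\rL,\tau)}}$, the image lands in a sub-representation generated by finitely many vectors, hence is supported (after restricting to a suitable compact open $K$) on finitely many summands, each of which is injective, so the extension exists component-by-component and then assembles. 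This reduces everything to the finite-summand case plus the injectivity of each $\cP_{[\rM',\tau^{'\vee}]}^{\vee}$, which is the content of the previous paragraph.

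Putting it together: each $\cP_{[\rM',\pi']}$ is projective (compact induction then parabolic induction of a projective), hence its contragredient $\cP_{[\rM',\pi']}^{\vee}$ is injective, and $\mathcal{I}_{\overline{(\rL,\tau)}}$ is a direct sum of such injective objects, which is injective because $\mathrm{Rep}_k(\rG')$ is locally Noetherian. This completes the plan; the routine verifications are the adjunction/exactness facts for $\ind$ and $i_{\rM'}^{\rG'}$ and the standard fact that contragredient of a finitely generated projective is injective, both of which are already used in the literature cited in the excerpt.
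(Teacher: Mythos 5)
Your first half is fine and matches the underlying mechanism: each $\cP_{[\rM',\pi'^{\vee}]}$ is (as in Helm's strategy) a projective object, the smooth contragredient functor is exact for $\ell\neq p$, and $\Hom_{\rG'}(W,\cP^{\vee})\cong\Hom_{\rG'}(\cP,W^{\vee})$, so each summand $\cP_{[\rM',\tau'^{\vee}]}^{\vee}$ is injective (no admissibility or finite generation is actually needed for this step). The gap is in the passage from the summands to the infinite direct sum. Your main route invokes ``$\mathrm{Rep}_k(\rG')$ is locally Noetherian because every finitely generated smooth $k$-representation of a $p$-adic group is Noetherian''; in the modular setting this blanket statement is not among the tools available in the paper's references (it was a long-standing open problem in general, and even for $\mathrm{SL}_n(F)$ it requires a separate argument, e.g.\ transferring Noetherianity from $\mathrm{GL}_n(F)$ via compact induction), so it cannot simply be cited. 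Your fallback ``direct verification'' does not close this hole: injectivity must be tested against maps from an \emph{arbitrary} subrepresentation $\rho_1\hookrightarrow\rho_2$, and such a $\rho_1$ need not be finitely generated, so its image in $\bigoplus_i\mathcal{I}_i$ need not be supported on finitely many summands; reducing to the finitely generated case (say by Zorn's lemma, one cyclic extension at a time) is exactly the point where Noetherianity of subrepresentations of cyclic representations would be needed, so the sketch is circular as written. A secondary, smaller issue is your use of Bernstein's second adjunction for $\rG'$ itself (to make $i_{\rM'}^{\rG'}$ preserve projectives); for Levi subgroups of $\mathrm{SL}_n(F)$ in the modular setting this also deserves a justification or a reference rather than an appeal to the classical statement.

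The paper closes the gap differently, using the same depth-finiteness you correctly identified but in another place: since for a fixed compact open subgroup $K'$ only finitely many classes in $\mathcal{C}_{\overline{[\rL,\tau]^{tw}}}'$ have $\cP_{[\rM',\tau'^{\vee}]}^{K'}\neq 0$, any $K'$-invariant smooth linear form on $\bigoplus_i\cP_i$ has only finitely many nonzero components, so $\bigoplus_i\cP_i^{\vee}$ is precisely the smooth part of the full dual of the single projective object $\bigoplus_i\cP_i$ (a direct sum of projectives is always projective). Injectivity then follows at once from the adjunction above, with no Noetherian hypothesis and no componentwise lifting argument. If you rewrite your proof along these lines, your finiteness observation does all the work and the problematic step disappears.
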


\begin{proof}
In fact $\mathcal{I}_{\overline{(\rL,\tau)}}$ is the smooth part of the contragredient $\prod_{[\rM',\pi']\in\mathcal{C}_{\overline{[\rL,\tau]^{tw}}}'}\cP_{[\rM',\tau^{'\vee}]}^{\ast}$ of $\oplus_{[\rM',\pi']\in\mathcal{C}_{\overline{[\rL,\tau]^{tw}}}'}\cP_{[\rM',\tau^{'\vee}]}$, where $\cP_{[\rM',\tau^{'\vee}]}^{\ast}$ is the contragredient (not necessarily smooth) of $\cP_{[\rM',\tau^{'\vee}]^{tw}}$. Fix an open compact subgroup $K'$ of $\rG'$, there exist finitely many $[\rM',\pi']\in\mathcal{C}_{\overline{[\rL,\tau]^{tw}}}'$ such that the $K$-invariant part of $\cP_{[\rM',\tau^{'\vee}]}$ is non-trivial, which implies the same property for the contragredient $\cP_{[\rM',\tau^{'\vee}]}^{\ast}$ by \cite[\S 4.15]{V1}. Hence an $K$-invariant non-trivial linear form $f$ in the smooth part of $\prod_{[\rM',\pi']\in\mathcal{C}_{\overline{[\rL,\tau]^{tw}}}'}\cP_{[\rM',\tau^{'\vee}]}^{\ast}$ must belong to $\oplus_{[\rM',\pi']\in\mathcal{C}_{\overline{[\rL,\tau]^{tw}}}'}\cP_{[\rM',\tau^{'\vee}]}^{\vee}$, which finishes the proof.
\end{proof}

\begin{thm}
\label{thm 009}
Let $\rG$ be a Levi subgroup of $\mathrm{GL}_n(F)$ and $\rG'$ be a Levi subgroup of $\mathrm{SL}_n(F)$, such that $\rG'=\rG\cap\mathrm{SL}_n(F)$. We have a category decomposition
$$\mathrm{Rep}_k(\rG')\cong\prod_{[\rL,\tau]^{tw}\in\mathcal{SC}_{\rG}^{tw}}\mathrm{Rep}_k(\rG')_{[\rL,\tau]^{tw}},$$ 
where
\begin{enumerate}
\item $\mathcal{SC}_{\rG}^{tw}$ is the set of $\rG$-twist equivalent supercuspidal classes in $\rG$; 
\item $\mathrm{Rep}_k(\rG')_{[\rL,\tau]^{tw}}$ is the full sub-category consisting with the objects whose irreducible sub-quotients have supercuspidal support belonging to $[\rL',\tau']_{\rG}$ and $\tau'$ is an irreducible direct component of $\tau\vert_{\rL'}$.
\end{enumerate}
In particular, for each object in $\mathrm{Rep}_k(\rG')_{[\rL,\tau]^{tw}}$, it has an injective resolution with direct sums of copies of $\mathcal{I}_{(\rL,\tau)}$.
\end{thm}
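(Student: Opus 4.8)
The plan is to deduce the decomposition by applying Proposition \ref{prop 0011} once for each class $\mathcal{X}=[\rL,\tau]^{tw}\in\mathcal{SC}_{\rG}^{tw}$, with $\mathcal{I}_1=\mathcal{I}_{(\rL,\tau)}$ and $\mathcal{I}_2=\mathcal{I}_{\overline{(\rL,\tau)}}$, so as to split off the factor $\mathrm{Rep}_k(\rG')_{[\rL,\tau]^{tw}}$, and then to glue the resulting two‑term splittings into the asserted product exactly as in \cite{Helm}. First I would check that both objects are injective: $\mathcal{I}_{\overline{(\rL,\tau)}}$ is the lemma just proved, and $\mathcal{I}_{(\rL,\tau)}$ is a \emph{finite} direct sum (observation (1) after Definition \ref{defn 0035}) of objects $\cP_{[\rM',\pi^{'\vee}]}^{\vee}$, each of which is injective because, writing $\cP_{[\rM',\pi^{'\vee}]}=i_{\rM'}^{\rG'}\ind_{\tilde{J}_{\rM}'}^{\rM'}\cP$ with $\cP$ a projective $\tilde{J}_{\rM}'$‑module, its smooth contragredient is $\overline{i_{\rM'}^{\rG'}}\,\Ind_{\tilde{J}_{\rM}'}^{\rM'}\cP^{\vee}$, a composite of right adjoints of the exact functors $\res_{\tilde{J}_{\rM}'}^{\rM'}$ and $r_{\rM'}^{\rG'}$ (second adjunction), applied to the injective $\tilde{J}_{\rM}'$‑module $\cP^{\vee}$; such composites preserve injectives.

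The heart of the argument is the identification, for fixed $\mathcal{X}=[\rL,\tau]^{tw}$, of the set $\mathcal{S}_1$ of irreducible subquotients of $\mathcal{I}_{(\rL,\tau)}$ with the set of irreducible $\rG'$‑representations whose supercuspidal support lies in $[\rL',\tau']_{\rG}$, $\tau'$ ranging over the components of $\tau\vert_{\rL'}$ (which is also, by definition, the set of irreducibles inside $\mathrm{Rep}_k(\rG')_{[\rL,\tau]^{tw}}$). For the inclusion ``$\subseteq$'', Proposition \ref{prop 007} bounds the supercuspidal support of any subquotient of a summand $\cP_{[\rM',\pi^{'\vee}]}^{\vee}$ of $\mathcal{I}_{(\rL,\tau)}$ by $\rG$‑conjugates of the classes $[\rL',\tau_i']$, and since $[\rM',\pi']\in\mathcal{C}_{[\rL,\tau]^{tw}}'$ this forces it into $[\rL',\tau']_{\rG}$ (using that a ramified twist of $\tau$ has the same restriction to $\rL'$, so that $\rG$‑twist equivalence collapses to the $\rG$‑inertial data visible on $\rG'$). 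For ``$\supseteq$'', an irreducible $\rho'$ with supercuspidal support in $[\rL',\tau']_{\rG}$ has cuspidal support $[\rN',\gamma']$ lifting to a cuspidal pair $(\rN,\gamma)$ of $\rG$ whose supercuspidal support is pinned into $[\rL,\tau]^{tw}$ by the compatibility of supercuspidal supports under restriction from $\mathrm{GL}_n$ to $\mathrm{SL}_n$ (\cite[Proposition 4.4]{C2} together with Clifford theory for $\mathrm{GL}_m\supset\mathrm{SL}_m$, where cuspidals with overlapping restrictions differ by an $F^{\times}$‑twist); hence $[\rN',\gamma']\in\mathcal{C}_{[\rL,\tau]^{tw}}'$, so $\mathcal{I}_{[\rN',\gamma']}$ is a summand of $\mathcal{I}_{(\rL,\tau)}$ and Corollary \ref{cor 0010} embeds $\rho'$ into it. This identification yields condition~1 of Proposition \ref{prop 0011} at once; condition~2 follows from the uniqueness of the supercuspidal support of irreducible $\rG'$‑representations (\cite{C2}), since the same reasoning applied to $\mathcal{C}_{\overline{[\rL,\tau]^{tw}}}'$ shows every object of $\mathcal{S}_2$ has supercuspidal support \emph{outside} $[\rL',\tau']_{\rG}$; and for condition~3, an irreducible not in $\mathcal{S}_1$ has cuspidal support outside $\mathcal{C}_{[\rL,\tau]^{tw}}'$ (else Corollary \ref{cor 0010} would place it in $\mathcal{S}_1$), hence inside $\mathcal{C}_{\overline{[\rL,\tau]^{tw}}}'$, and Corollary \ref{cor 0010} embeds it into the corresponding summand of $\mathcal{I}_{\overline{(\rL,\tau)}}$.

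Proposition \ref{prop 0011} then gives, for each $\mathcal{X}$, a splitting $\mathrm{Rep}_k(\rG')\cong\mathrm{R}_1^{\mathcal{X}}\times\mathrm{R}_2^{\mathcal{X}}$ with $\mathrm{R}_1^{\mathcal{X}}=\mathrm{Rep}_k(\rG')_{[\rL,\tau]^{tw}}$ and with every object of $\mathrm{R}_1^{\mathcal{X}}$ admitting an injective resolution by direct sums of copies of $\mathcal{I}_{(\rL,\tau)}$ — this is already the last assertion of the theorem. To assemble these into the product over all of $\mathcal{SC}_{\rG}^{tw}$ I would follow \cite{Helm} verbatim: the functorial projections $\Pi\mapsto\Pi_{\mathcal{X}}$ onto the factors $\mathrm{R}_1^{\mathcal{X}}$ are pairwise orthogonal because the uniqueness of supercuspidal supports puts each irreducible subquotient in exactly one factor, and for a fixed compact open $K'\leq\rG'$ only finitely many $\mathcal{X}$ have $\mathrm{Rep}_k(\rG')_{[\rL,\tau]^{tw}}$ meeting the representations with nonzero $K'$‑invariants (irreducibles with $K'$‑fixed vectors have bounded depth, and there are finitely many $\rG$‑twist supercuspidal classes of bounded depth — the finiteness underlying observations (1)–(2) after Definition \ref{defn 0035}); hence $\Pi^{K'}=\bigoplus_{\mathcal{X}}(\Pi_{\mathcal{X}})^{K'}$ for every $\Pi$ and every $K'$, and taking the union over $K'$ gives $\Pi=\bigoplus_{\mathcal{X}}\Pi_{\mathcal{X}}$, upgrading the family of two‑term splittings to the equivalence $\mathrm{Rep}_k(\rG')\cong\prod_{\mathcal{X}}\mathrm{Rep}_k(\rG')_{[\rL,\tau]^{tw}}$.

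The main obstacle is the identification of $\mathcal{S}_1$ in the second paragraph: it forces one to reconcile the twist‑class bookkeeping — why the correct index is $\rG$‑twist rather than $\rG$‑inertial classes — with the Clifford‑theoretic compatibility of supercuspidal supports under restriction from $\mathrm{GL}_n$ to $\mathrm{SL}_n$, and it is precisely here that the uniqueness of the supercuspidal support for $\rG'$ (\cite{C2}) is indispensable: without it $\mathcal{S}_1$ and $\mathcal{S}_2$ could overlap and Proposition \ref{prop 0011} would fail to apply. Everything else — the injectivity of $\mathcal{I}_{(\rL,\tau)}$, the verification of the three conditions, and the gluing — is then routine given Corollary \ref{cor 0010}, Proposition \ref{prop 007}, and the finiteness observations.
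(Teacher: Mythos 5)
Your proposal is correct and follows essentially the same route as the paper: verify the three hypotheses of Proposition \ref{prop 0011} for the pair $(\mathcal{I}_{(\rL,\tau)},\mathcal{I}_{\overline{(\rL,\tau)}})$ using Corollary \ref{cor 0010}, Proposition \ref{prop 007} and the uniqueness of cuspidal and supercuspidal supports, then assemble the resulting two-term splittings over all classes in $\mathcal{SC}_{\rG}^{tw}$ as in \cite{Helm}. The only difference is that you spell out points the paper leaves implicit (injectivity of $\mathcal{I}_{(\rL,\tau)}$ as a finite sum of smooth duals of projectives, and the depth/$K'$-invariant finiteness in the gluing), which is a harmless elaboration rather than a different argument.
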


\begin{proof}
By the definition of $\mathcal{I}_{(\rL,\tau)}$, Corollary \ref{cor 0010} and Proposition \ref{prop 007}, each irreducible sub-quotient of $\mathcal{I}_{(\rL,\tau)}$ is a sub-representation of $\mathcal{I}_{(\rL,\tau)}$, and non of the irreducible sub-quotient of $\mathcal{I}_{\overline{(\rL,\tau)}}$ appears as a sub-quotient of $\mathcal{I}_{(\rL,\tau)}$. Furthermore, each irreducible $k$-representation is either a sub-representation of $\mathcal{I}_{(\rL,\tau)}$ or a sub-representation of $\mathcal{I}_{\overline{(\rL,\tau)}}$ by the unicity of cuspidal support as well as the unicity of supercuspidal support. Hence by Proposition \ref{prop 0011}, for any object $\Pi\in\mathrm{Rep}_k(\rG')$ and any $\rG$-twist equivalent supercuspidal class $[\rL,\tau]^{tw}$ of $\rG$, define $\Pi_{[\rL,\tau]^{tw}}$ be the largest sub-representation of $\Pi$ belonging to $\mathrm{Rep}_k(\rG')_{[rL,\tau]^{tw}}$, we have $\Pi\cong\oplus_{[\rL,\tau]\in\mathcal{SC}_{\rG}}\Pi_{[\rL,\tau]^{tw}}$, and by applying Proposition \ref{prop 0011} we know that there is no morphism between objects of sub-categories defined from different $\rG$-twist equivalent supercuspidal classes, hence we finish the proof.
\end{proof}

\begin{rem}
\label{rem 0031}
Let $(\rL,\tau)$ be a supercuspidal pair of $\rG$, and $\tau\vert_{\rL'}\cong\oplus_{j=1}^s\tau_j'$, where $(\rL',\tau_j')$ are supercuspidal pairs of $\rG'$. Denote by $\mathrm{Rep}_k(\rG')_{[\rL',\tau_j']}$ the full sub-category of $\mathrm{Rep}_k(\rG')$, consisting of objects of which any irreducible subquotient has supercuspidal support belonging to the $\rG'$-inertially equivalent class $[\rL',\tau_j']$. The sub-category $\mathrm{Rep}_k(\rG')_{[\rL,\tau]^{tw}}$ is generated by sub-categories $\mathrm{Rep}_k(\rG')_{[\rL',\tau_j']}$, for all $1\leq j\leq s$. In other words, let $\mathcal{SC}_{\rG'}^{\rG}$ be the set of $\rG$-inertially equivalent supercuspidal classes of $\rG'$, Theorem \ref{thm 009} gives a category decomposition of $\mathrm{Rep}_k(\rG')$ according to $\mathcal{SC}_{\rG'}^{\rG}$.
\end{rem}

\begin{cor}
\label{cor 0011}
Let $[\rL,\tau]$ be a $\rG$-inertially equivalent class of $\rG$, where $\rG$ is a Levi subgroup of $\mathrm{GL}_n(F)$. The functor $\res_{\rG'}^{\rG}$ gives functors from blocks $\mathrm{Rep}_k(\rG)_{[\rL,\tau\otimes\chi]}$ for any $k$-quasicharacter $\chi$ of $F^{\times}$ to the sub-category $\mathrm{Rep}_k(\rG')_{[\rL,\tau]^{tw}}$.
\end{cor}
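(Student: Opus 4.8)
The plan is to show that restriction sends each block $\mathrm{Rep}_k(\rG)_{[\rL,\tau\otimes\chi]}$ into the summand $\mathrm{Rep}_k(\rG')_{[\rL,\tau]^{tw}}$ of the decomposition in Theorem \ref{thm 009}, by tracking supercuspidal supports through restriction. First I would recall that the sub-category $\mathrm{Rep}_k(\rG')_{[\rL,\tau]^{tw}}$ was defined (see item (2) of Theorem \ref{thm 009}) to consist of exactly those objects all of whose irreducible sub-quotients have supercuspidal support inside $[\rL',\tau']_{\rG}$ for some irreducible component $\tau'$ of $\tau\vert_{\rL'}$; equivalently, inside the union $\bigcup_{i\in I}[\rL',\tau_i']$ where $\tau\vert_{\rL'}=\oplus_{i\in I}\tau_i'$. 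So it suffices to check this support condition for $\res_{\rG'}^{\rG}P$ when $P$ lies in a single block $\mathrm{Rep}_k(\rG)_{[\rL,\tau\otimes\chi]}$.

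The key step is an application of Lemma \ref{lemma 003}: taking $\mathcal{A} = \{[\rL,\tau\otimes\chi]\}$, which is a (one-element, hence admissible) family of $\rG$-inertially equivalent supercuspidal classes, Lemma \ref{lemma 003} tells us that every irreducible sub-quotient $\tau'$ of $P\vert_{\rG'}$ has supercuspidal support in the associated family $\mathcal{A}'$ of $\rG'$-inertially equivalent supercuspidal classes; and $\mathcal{A}'$ consists precisely of the classes $[\rL',\sigma']$ where $\sigma'$ is an irreducible component of $\sigma\vert_{\rL'}$ for $(\rL,\sigma)$ a supercuspidal pair inertially equivalent to $(\rL,\tau\otimes\chi)$. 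The remaining point is that twisting $\tau$ by the quasicharacter $\chi$ of $F^\times$ does not change the set of $\rG$-inertially equivalent supercuspidal classes of $\rG'$ that arise: since $\chi\vert_{\rG'}$ is trivial — $\rG'$ being a Levi of $\mathrm{SL}_n(F)$ has no non-trivial $F^\times$-valued determinant character, or more directly $\chi$ restricted to $\rL'$ is trivial — we have $(\tau\otimes\chi)\vert_{\rL'} = \tau\vert_{\rL'}$, so the components $\tau_i'$ are the same. Hence $\mathcal{A}'$ is contained in $\bigcup_{i\in I}[\rL',\tau_i']$ (up to the unramified-twist and $\rG'$-conjugation latitude already built into the inertial classes), which is exactly the support condition defining $\mathrm{Rep}_k(\rG')_{[\rL,\tau]^{tw}}$.

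Putting these together: for $P\in\mathrm{Rep}_k(\rG)_{[\rL,\tau\otimes\chi]}$, every irreducible sub-quotient of $\res_{\rG'}^{\rG}P$ has supercuspidal support in $[\rL',\tau']_{\rG}$ for a suitable component $\tau'$ of $\tau\vert_{\rL'}$, so $\res_{\rG'}^{\rG}P\in\mathrm{Rep}_k(\rG')_{[\rL,\tau]^{tw}}$; and since $\res_{\rG'}^{\rG}$ is exact and $k$-linear, this defines the asserted functor between the categories. I expect the only subtle point to be the bookkeeping around the twist by $\chi$ — making sure that the (possibly ramified) twist on the $\rG$-side collapses to something harmless after restriction to $\rG'$ — but this follows from the triviality of $\chi$ on $\rG'$, which was already used implicitly in the constructions of Section \ref{section 4} (e.g.\ in Lemma \ref{lemma 005}, where $\delta_{\rM'}=\delta_{\rM}\vert_{\rM'}$ and the analogous collapsing appears).
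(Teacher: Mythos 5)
Your argument is correct and follows exactly the paper's route: the paper also deduces the corollary directly from Theorem \ref{thm 009} and Lemma \ref{lemma 003}, with your application of Lemma \ref{lemma 003} to $\mathcal{A}=\{[\rL,\tau\otimes\chi]\}$ and the observation that the twist by $\chi$ dies on restriction to $\rL'$ being exactly the (left implicit) bookkeeping behind the paper's one-line proof. No changes needed.
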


\begin{proof}
It is directly from Theorem \ref{thm 009} and Lemma \ref{lemma 003}.
\end{proof}

\begin{cor}
\label{cor 0022}
Let $\rG'$ be a Levi subgroup of $\mathrm{SL}_n(F)$. There is a category decomposition
$$\mathrm{Rep}_k(\rG')\cong\mathrm{Rep}_k(\rG')_{\mathcal{SC}}\times\mathrm{Rep}_k(\rG')_{non-\mathcal{SC}},$$
where
\begin{enumerate}
\item an object belongs to $\mathrm{Rep}_k(\rG')_{\mathcal{SC}}$, if and only if its irreducible sub-quotients are supercuspidal;
\item an object belongs to $\mathrm{Rep}_k(\rG')_{non-\mathcal{SC}}$, if and only if non of its irreducible sub-quotients is supercuspidal.
\end{enumerate}
\end{cor}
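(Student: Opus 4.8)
The plan is to deduce Corollary \ref{cor 0022} directly from Theorem \ref{thm 009} by regrouping the indexing set $\mathcal{SC}_{\rG}^{tw}$ of $\rG$-twist equivalent supercuspidal classes into two families according to whether the associated Levi subgroup of $\rG$ is $\rG$ itself or a proper Levi subgroup. First I would recall that an irreducible $k$-representation $\tau'$ of $\rG'$ is supercuspidal if and only if its supercuspidal support is of the form $(\rG',\tau')$ itself, i.e. if and only if, for the cuspidal pair $(\rL,\tau)$ of $\rG$ whose restriction contains the supercuspidal support of $\tau'$ (using Proposition (C.) and \cite[Proposition 4.4]{C2}), one has $\rL=\rG$; equivalently $\tau'$ is supercuspidal exactly when it lies in $\mathrm{Rep}_k(\rG')_{[\rG,\tau]^{tw}}$ for some irreducible supercuspidal $k$-representation $\tau$ of $\rG$. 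Conversely, $\tau'$ is non-supercuspidal exactly when it lies in some $\mathrm{Rep}_k(\rG')_{[\rL,\tau]^{tw}}$ with $\rL$ a proper Levi subgroup of $\rG$.

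Next I would partition $\mathcal{SC}_{\rG}^{tw}=\mathcal{SC}_{\rG,\mathrm{max}}^{tw}\sqcup\mathcal{SC}_{\rG,\mathrm{prop}}^{tw}$, where the first piece consists of the classes $[\rG,\tau]^{tw}$ (here the Levi is all of $\rG$) and the second of the classes $[\rL,\tau]^{tw}$ with $\rL\subsetneq\rG$. Grouping the product in Theorem \ref{thm 009} along this partition gives
$$\mathrm{Rep}_k(\rG')\cong\Big(\prod_{[\rG,\tau]^{tw}\in\mathcal{SC}_{\rG,\mathrm{max}}^{tw}}\mathrm{Rep}_k(\rG')_{[\rG,\tau]^{tw}}\Big)\times\Big(\prod_{[\rL,\tau]^{tw}\in\mathcal{SC}_{\rG,\mathrm{prop}}^{tw}}\mathrm{Rep}_k(\rG')_{[\rL,\tau]^{tw}}\Big),$$
and I would set $\mathrm{Rep}_k(\rG')_{\mathcal{SC}}$ to be the first factor and $\mathrm{Rep}_k(\rG')_{non-\mathcal{SC}}$ the second. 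It then remains to check the characterisation of objects: by the defining property in Theorem \ref{thm 009}(2), an object of the first factor has all irreducible sub-quotients with supercuspidal support of the form $[\rG',\tau']_{\rG}$, hence supercuspidal; and an object of the second factor has all irreducible sub-quotients with supercuspidal support $[\rL',\tau']_{\rG}$, $\rL'$ a proper Levi of $\rG'$, hence non-supercuspidal. The "if and only if" in both items follows because the two families of sub-categories exhaust all supercuspidal support classes and are disjoint, so an object with all sub-quotients supercuspidal cannot have a nonzero component in the second factor and vice versa.

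The only genuinely substantive point — and the one I would state carefully rather than wave through — is the equivalence "$\tau'$ supercuspidal $\iff$ its supercuspidal support over $\rG$ comes from $\rL=\rG$". This requires knowing that supercuspidality of $\tau'$ is detected by the supercuspidality of a cuspidal $k$-representation $\pi$ of $\rG$ with $\tau'\hookrightarrow\pi\vert_{\rG'}$ (Proposition (C.) in Section \ref{section 1.0}, together with \cite[Proposition 4.4]{C2}), and that the $\rG$-inertial class $[\rL,\tau]^{tw}$ assigned to $\tau'$ in Theorem \ref{thm 009} indeed has Levi part $\rG$ precisely in that case. Everything else is a purely formal regrouping of a direct product of categories, so I do not expect any real obstacle beyond bookkeeping; one should just be slightly careful that twisting by a possibly ramified $k$-quasicharacter of $F^\times$ (allowed in Definition \ref{defn 0035}) does not change whether the Levi part is proper, which is clear since twisting does not move the Levi.
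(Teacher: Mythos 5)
Your proposal is correct and is essentially the paper's own argument: the paper proves Corollary \ref{cor 0022} simply as a direct consequence of Theorem \ref{thm 009}, and your partition of $\mathcal{SC}_{\rG}^{tw}$ into classes with Levi $\rG$ versus proper Levi, together with the observation that an irreducible representation is supercuspidal exactly when its supercuspidal support has full Levi, is just the bookkeeping implicit in that one-line deduction.
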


\begin{proof}
Directly from Theorem \ref{thm 009}.
\end{proof}

\begin{defn}
\label{defn 0034}
We call $\mathrm{Rep}_k(\rM')_{\mathcal{SC}}$ the supercuspidal sub-categroy of $\mathrm{Rep}_k(\rM')$, and the blocks of $\mathrm{Rep}_k(\rM')_{\mathcal{SC}}$ are called supercuspidal blocks of $\mathrm{Rep}_k(\rM')$.
\end{defn}

\section{Supercuspidal sub-category of $\mathrm{Rep}_k(\rM')$}
\label{section 5}

In this section, let $\rG$ be $\mathrm{GL}_n(F)$ and $\rG'$ be $\mathrm{SL}_n(F)$. In the previous section, Theorem \ref{thm 009} gives a category decomposition of $\mathrm{Rep}_k(\rG')$, according to which we define the supercuspidal sub-category $\mathrm{Rep}_k(\rG')_{\mathcal{SC}}$. In this section, Theorem \ref{thm 0030} gives a description of the blocks of the supercuspidal sub-category of $\mathrm{Rep}_k(\rG')$ and $\mathrm{Rep}_k(\rM')$, where $\rM'$ is a Levi subgroup of $\rG'$.

\subsection{$\rM'$-inertially equivalent supercuspidal classes}

In this section, we give a bijection between $\rM'$-conjugacy classes of maximal simple $k$-types of $\rM'$, and $\rM'$-inertially equivalent cuspidal classes of $\rM'$. The most complexity of this section comes from the fact that the Levi subgroup of $\rG'$ is not a special linear group in lower rank.

Let $\rM$ be the Levi subgroup of $\rG$ such that $\rM'=\rM\cap\rG'$. Let $(\tilde{J}_{\rM}',\tilde{\lambda}_{\rM}')$ be a maximal simple $k$-type of $\rM'$ defined from a maximal simple $k$-type $(J_{\rM},\lambda_{\rM})$ of $\rM$. As explained in Section \ref{section 1.0}, let $\pi$ be an irreducible cuspidal $k$-representation of $\rM$ containing $(J_{\rM},\lambda_{\rM})$, then there exists a direct component $\pi'$ of $\pi\vert_{\rM'}$, such that $\pi'$ contains $(\tilde{J}_{\rM}',\tilde{\lambda}_{\rM}')$.

\begin{lem}
\label{lem 0020}
Let $E$ be a field extension of $F$, such that there is an embedding $E^{\times}\hookrightarrow \mathrm{GL}_n(F)$. Let $\omega_{E}$ be a uniformiser of $E$, and $Z_{\omega_{E}}$ be a subgroup of $\mathrm{GL}_n(F)$ generated by the image of $\omega_{E}$ under the embedding. Then a $k$-character of $Z_{\omega_{E}}$ can be extended to a character of $\mathrm{GL}_n(F)$.
\end{lem}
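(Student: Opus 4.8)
The plan is to reduce the extension problem to two soft facts: that $\mathrm{GL}_n(F)^{\ab}\cong F^\times$ via the determinant, and that any $k$-character of an open subgroup of $F^\times$ which is pro-$p$ away from a suitable lattice can be extended to $F^\times$. First I would compute the image of $Z_{\omega_E}$ under the determinant map $\det\colon\mathrm{GL}_n(F)\to F^\times$. If $E/F$ has ramification index $e$ and residue degree $f$, so that $[E:F]=ef$ and $n=ef\cdot m$ for the $m$ coming from the embedding $E^\times\hookrightarrow\mathrm{GL}_n(F)$ (which factors through $\mathrm{GL}_m(E)$), then $\det(\omega_E)=\mathrm{N}_{E/F}(\omega_E)^{m}$, and $\mathrm{N}_{E/F}(\omega_E)$ is a uniformiser of $F$ times a unit. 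Hence $\det(Z_{\omega_E})$ is the cyclic subgroup of $F^\times$ generated by a single element of positive valuation $vm$ for some $v\ge 1$; in particular it is a discrete, infinite cyclic, torsion-free subgroup of $F^\times$, and it meets $\mathfrak{o}_F^\times$ trivially.

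Next I would observe that since $\mathrm{GL}_n(F)=\ker(\det)\cdot Z$ need not hold on the nose, I instead argue at the level of characters: a $k$-character $\chi$ of $Z_{\omega_E}$ is trivial on $Z_{\omega_E}\cap\mathrm{SL}_n(F)$? — not necessarily, so the cleanest route is to first extend $\chi$ along the determinant. Concretely, $\det$ restricts to an injection $Z_{\omega_E}\hookrightarrow F^\times$ (it is injective because $Z_{\omega_E}$ is infinite cyclic generated by an element of nonzero valuation, and $\det$ of that element has nonzero valuation, so it has infinite order), with image the discrete subgroup $\langle \det(\omega_E)\rangle$. So it suffices to show: (i) any $k$-character of the discrete cyclic subgroup $\langle\det(\omega_E)\rangle\subset F^\times$ extends to a $k$-character of $F^\times$; and (ii) pull back along $\det$. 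For (i), decompose $F^\times\cong \varpi^{\mathbb Z}\times\mathfrak{o}_F^\times$ for a chosen uniformiser $\varpi$; the subgroup $\langle\det(\omega_E)\rangle$ projects to a finite-index subgroup of $\varpi^{\mathbb Z}$ (times a unit correction), and extending a character from a finite-index subgroup of $\mathbb Z$ to $\mathbb Z$ is possible precisely because $k$ is algebraically closed, hence its multiplicative group is divisible — any root needed to invert the index exists in $k^\times$. Then extend by $1$ on $\mathfrak o_F^\times$, and the composite with $\det$ is the desired character of $\mathrm{GL}_n(F)$.

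The main obstacle is bookkeeping with the embedding and the determinant: one must check that $\det$ is genuinely injective on $Z_{\omega_E}$ (so that a character of $Z_{\omega_E}$ really is the same as a character of its image), and that the image is a finite-index subgroup of the value group rather than something pathological — this uses that $\omega_E$ acts on $F^n$ with all ``eigenvalues'' conjugate uniformisers of $E$, so $\det(\omega_E)$ has valuation exactly $f m$ in the normalization where $\omega_E$ has $E$-valuation $1$. Once that is in place, the divisibility of $k^\times$ does all the work; there is no issue with $\ell$ since we only need roots of elements of $k^\times$, and $k^\times$ is divisible for $k$ algebraically closed of any characteristic. I expect the write-up to be short: identify $\det(Z_{\omega_E})$, invoke divisibility of $k^\times$ to extend over the value group, inflate trivially over $\mathfrak o_F^\times$, and compose with $\det$.
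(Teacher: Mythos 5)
Your proposal is correct and takes essentially the same route as the paper, whose entire proof is the one-line observation that such a character factors through the determinant of $\mathrm{GL}_n(F)$. Your write-up simply supplies the details implicit in that remark: $\det(\omega_E)$ has nonzero valuation in $F^\times$, so one extends the character over $F^\times$ (trivially on $\mathfrak{o}_F^\times$, using divisibility of $k^\times$ to choose the needed root) and composes with $\det$.
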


\begin{proof}
A $k$-character of $Z_{\omega_{E}}$ factors through determinant of $\mathrm{GL}_n(F)$. 
\end{proof}

Under the assumption on $E$ as in Lemma \ref{lem 0020}, denote by $Z_{\cO_E}$ the group generated by the image of $\cO_{E}^{\times}$ under the embedding. For general Levi subgroup $\rM$ of $\rG=\mathrm{GL}_n(F)$. Suppose $\rM$ is a direct product of $m$ general linear groups, and there exist field extensions $E_i,1\leq i\leq m$ of $F$, such that $\prod_{i=1}^mE_i^{\times}\hookrightarrow \rM$. Then after fixing a uniformiser $\omega_i$ for each $E_i$, we denote by $Z_{\omega_{E_{\rM}}}$ the group generated by the image of $\{1\times...\times\omega_i\times...\times1,1\leq i\leq m\}$ under the embedding, and by $Z_{\cO_{E_{\rM}}}$ the group generated by the image of $\prod_{i=1}^m\cO_{i}^{\times}$, where $\cO_{i}$ is the ring of integers of $E_i$. It is obvious that the image of $\prod_{i=1}^mE_i^{\times}$ can be decomposed as a direct product $Z_{\omega_{E_{\rM}}}\times Z_{\cO_{E_{\rM}}}$. In particular, when $E_i=F$ for $1\leq i\leq m$, we consider the canonical embedding, which is the equivalence between $(F^{\times})^m$ and the centre of $\rM$. Then the centre $Z_{\rM}$ of $\rM$ decomposes as $Z_{\omega_{F_{\rM}}}\times Z_{\cO_{F_{\rM}}}$. We denote by $Z_{\omega_{E_{\rM}}}'$ as $Z_{\omega_{E_{\rM}}}\cap\rM'$ and $Z_{\cO_{E_{\rM}}}'$ as $Z_{\cO_{E_{\rM}}}\cap\rM'$.

\begin{rem}
\label{remark 0021}
Lemma \ref{lem 0020} implies that a $k$-character of $Z_{\omega_{F_{\rM}}}$ can be extended to a $k$-character of $\rM$. In particular, for two irreducible $k$-representations of $\rM$, if their central characters coincide to each other on $Z_{\cO_{F_{\rM}}}$, then up to modifying by an unramified $k$-character, they share the same central character.
\end{rem}

\begin{prop}
\label{lem 0019}
Let $\pi_1,\pi_2$ be two irreducible cuspidal $k$-representations of $\rM'$ which contain $(\tilde{J}_{\rM}',\tilde{\lambda}_{\rM}')$. Then there exits an unramified $k$-character $\chi$ of $F^{\times}$, such that $\pi_1\cong\pi_2\otimes\chi$.
\end{prop}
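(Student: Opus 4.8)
The plan is to reduce the statement to the corresponding — and already available — fact for cuspidal $k$-representations of the Levi subgroup $\rM$ of $\mathrm{GL}_n(F)$, transported through the dictionary between maximal simple $k$-types of $\rM$ and of $\rM'$ recalled in Section~\ref{section 1.0}.

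First I would lift $\pi_1,\pi_2$ to $\rM$. Choose irreducible cuspidal $k$-representations $\pi^{(1)},\pi^{(2)}$ of $\rM$ with $\pi_i\hookrightarrow\pi^{(i)}\vert_{\rM'}$. Since $\pi_i$ contains $(\tilde{J}_{\rM}',\tilde{\lambda}_{\rM}')$, built from $(J_{\rM},\lambda_{\rM})$, and since $\tilde{\lambda}_{\rM}=\ind_{J_{\rM}}^{\tilde{J}_{\rM}}\lambda_{\rM}$ is irreducible, the rigidity of maximal simple types forces $\pi^{(i)}\vert_{\tilde{J}_{\rM}}$ to contain a twist $\tilde{\lambda}_{\rM}\otimes\epsilon_i$ with $\epsilon_i$ a $k$-character of $\tilde{J}_{\rM}/\tilde{J}_{\rM}'$; as $\tilde{J}_{\rM}'=\tilde{J}_{\rM}\cap\rM'$ is the kernel of the map $\rM\to F^\times$ whose kernel is $\rM'$, this character factors through $\det(\tilde{J}_{\rM})\subseteq\mathfrak{o}_F^\times$ and hence extends to a (possibly ramified) $k$-character $\xi_i$ of $F^\times$. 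Viewing $\xi_i^{-1}$ as a character of $\rM$ through that map — so that it is trivial on $\rM'$ — replacing $\pi^{(i)}$ by $\pi^{(i)}\otimes\xi_i^{-1}$ leaves $\pi^{(i)}\vert_{\rM'}$ unchanged while arranging that each $\pi^{(i)}$ contains the fixed maximal simple type $(J_{\rM},\lambda_{\rM})$. (The comparison of central characters on $Z_{\cO_{F_{\rM}}}'$ in Remark~\ref{remark 0021} is the soft input that keeps this bookkeeping consistent.)

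Next I would invoke the maximal simple $k$-type theory for $\mathrm{GL}_n$ recalled in Section~\ref{section 1.0} (going back to \cite{BuKuII}, \cite{MS}): the irreducible cuspidal $k$-representations of $\rM$ containing $(J_{\rM},\lambda_{\rM})$ form a single orbit under twists by unramified $k$-characters of $\rM$, so $\pi^{(2)}\cong\pi^{(1)}\otimes\nu$ for an unramified $k$-character $\nu$ of $\rM$. Restricting, $\pi^{(2)}\vert_{\rM'}\cong(\pi^{(1)}\vert_{\rM'})\otimes(\nu\vert_{\rM'})$, and $\nu\vert_{\rM'}$ is an unramified $k$-character of $\rM'$ — trivial on every compact open subgroup, being the restriction of such a character of $\rM$ — that is, a twist by an unramified $k$-character $\chi$ of $F^\times$ in the sense of Section~\ref{section 002}. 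Thus $\pi_1$ and $\pi_2':=\pi_2\otimes(\nu\vert_{\rM'})^{-1}$ are both irreducible summands of $\pi^{(1)}\vert_{\rM'}$, and both contain $(\tilde{J}_{\rM}',\tilde{\lambda}_{\rM}')$ (for $\pi_2'$ because $(\nu\vert_{\rM'})^{-1}$ is trivial on the compact group $\tilde{J}_{\rM}'$). If the summand of $\pi^{(1)}\vert_{\rM'}$ containing the fixed type $(\tilde{J}_{\rM}',\tilde{\lambda}_{\rM}')$ is uniquely determined, then $\pi_1\cong\pi_2'\cong\pi_2\otimes(\nu\vert_{\rM'})^{-1}$, which is the claim with $\chi^{-1}$ in place of $\chi$.

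The hard part is precisely this uniqueness, and it is where the projective normaliser $\tilde{J}_{\rM}$ (rather than $J_{\rM}\cap\rM'$) earns its keep: by Remark~\ref{rem 0033}, when $\rM'$ is a proper Levi several $\tilde{J}_{\rM}$-conjugate components of $\tilde{\lambda}_{\rM}\vert_{\tilde{J}_{\rM}'}$ may sit inside one and the same summand of $\pi^{(1)}\vert_{\rM'}$, so counting components does not suffice. I would argue that $\tilde{\lambda}_{\rM}'$ occurs in $\pi^{(1)}\vert_{\tilde{J}_{\rM}'}$ only through the irreducible component $\tilde{\lambda}_{\rM}$ of $\pi^{(1)}\vert_{\tilde{J}_{\rM}}$ — which has multiplicity one since $(J_{\rM},\lambda_{\rM})$ is a type — and that, together with the multiplicity-freeness of $\pi^{(1)}\vert_{\rM'}$ (\cite[Proposition~2.35]{C1}) and the description of how the components of $\tilde{\lambda}_{\rM}\vert_{\tilde{J}_{\rM}'}$ are distributed among the summands (as in Lemma~\ref{lem 0012}), forces all copies of $\tilde{\lambda}_{\rM}'$ into a single summand. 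Equivalently, realising $\pi_i\cong\ind_{N_{\rM'}(\tilde{\lambda}_{\rM}')}^{\rM'}\Lambda_i$ through extended maximal simple $k$-types, the content is that the two extensions $\Lambda_i$ of $\tilde{\lambda}_{\rM}'$ to $N_{\rM'}(\tilde{\lambda}_{\rM}')$ differ by a $k$-character of $N_{\rM'}(\tilde{\lambda}_{\rM}')/\tilde{J}_{\rM}'$ that is the restriction of an unramified $k$-character of $\rM'$ — not merely a character of $Z_{\rM'}$ up to finite order; here Lemma~\ref{lem 0020} pins this character down on the uniformiser part of $Z_{\rM'}$, and the remaining directions are controlled by the type theory of \cite{C1}.
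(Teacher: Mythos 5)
Your reduction to $\rM$ (lifting $\pi_1,\pi_2$ to cuspidals $\pi^{(1)},\pi^{(2)}$ of $\rM$, normalising so that both contain $(J_{\rM},\lambda_{\rM})$, and invoking the $\mathrm{GL}$-side fact that cuspidals containing a fixed maximal simple $k$-type form a single unramified-twist orbit) is sound as far as it goes, but the whole weight of the proposition is then carried by the step you yourself flag as ``the hard part'': that the irreducible summand of $\pi^{(1)}\vert_{\rM'}$ containing the fixed type $(\tilde{J}_{\rM}',\tilde{\lambda}_{\rM}')$ is unique. This is a genuine gap, not a routine verification. Multiplicity-freeness of $\pi^{(1)}\vert_{\rM'}$ only says each irreducible \emph{representation} occurs once; it says nothing about a fixed \emph{type} occurring in two non-isomorphic summands. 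Your proposed justification --- that $\tilde{\lambda}_{\rM}'$ can occur in $\pi^{(1)}\vert_{\tilde{J}_{\rM}'}$ only ``through'' the single copy of $\tilde{\lambda}_{\rM}$ in $\pi^{(1)}\vert_{\tilde{J}_{\rM}}$ --- is unsubstantiated: $\pi^{(1)}\vert_{\tilde{J}_{\rM}}$ is the restriction of an infinite-dimensional admissible representation and, by Mackey applied to the compact-mod-centre induction realising $\pi^{(1)}$, has many irreducible constituents besides $\tilde{\lambda}_{\rM}$; excluding $\tilde{\lambda}_{\rM}'$ from all of their restrictions to $\tilde{J}_{\rM}'$ is exactly an intertwining/multiplicity computation for $\tilde{\lambda}_{\rM}'$ inside $\rM$ (not $\rM'$), and the nontrivial quotient $N_{\rM'}(\tilde{\lambda}_{\rM}')/Z_{\rM'}\tilde{J}_{\rM}'$ for a proper Levi is precisely the mechanism by which several non-isomorphic cuspidals of $\rM'$ contain the same type, so the danger is real and is essentially equivalent in difficulty to the proposition you are trying to prove. (Note also that Remark \ref{rem 0033} warns that type-theoretic expectations from the $\rM'=\rG'$ case break down for proper Levis.)

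The paper sidesteps this entirely by arguing intrinsically in $\rM'$: it writes $\pi_i\cong\ind_{N_{\rM'}(\tilde{\lambda}_{\rM}')}^{\rM'}\Lambda_{\rM',i}$ with $\Lambda_{\rM',i}$ extensions of the \emph{same} $\tilde{\lambda}_{\rM}'$, matches central characters using the splitting $Z_{\rM}\cong Z_{\omega_{F_{\rM}}}\times Z_{\cO_{F_{\rM}}}$ together with Lemma \ref{lem 0020} and Remark \ref{remark 0021}, and then compares $\Lambda_{\rM',1}\otimes\chi_1$ with $\Lambda_{\rM',2}$ via Frobenius reciprocity and a Jordan--H\"older argument on the finite-length representation $\ind_{Z_{\rM'}\tilde{J}_{\rM}'}^{N_{\rM'}(\tilde{\lambda}_{\rM}')}\mathds{1}$, whose constituents are characters of $N_{\rM'}(\tilde{\lambda}_{\rM}')/Z_{\rM'}\tilde{J}_{\rM}'$ that extend, again by Lemma \ref{lem 0020}, to (restrictions of) unramified characters of $\rM'$. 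Your closing paragraph in effect restates this strategy (``the two extensions $\Lambda_i$ differ by a character \dots that is the restriction of an unramified $k$-character'') without carrying it out; as written, your argument either presupposes that comparison or presupposes the uniqueness of the summand, and in both cases the key step is missing.
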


\begin{proof}
Let $N_{\rM'}(\tilde{\lambda}_{\rM}')$ be the normaliser of $\tilde{\lambda}_{\rM}'$ in $\rM'$, which contains the centre $Z_{\rM'}$ of $\rM'$ as mentioned in Section \ref{section 1.0}, by Theorem 4.4 of \cite{C1} there exist extensions $\Lambda_{\rM',1},\Lambda_{\rM',2}$ of $\tilde{\lambda}_{\rM}'$ to $N_{\rM'}(\tilde{\lambda}_{\rM}')$, such that $\pi_1\cong\ind_{N_{\rM'}(\tilde{\lambda}_{\rM}')}^{\rM'}\Lambda_{\rM',1}$ and $\pi_2\cong\ind_{N_{\rM'}(\tilde{\lambda}_{\rM}')}^{\rM'}\Lambda_{\rM',2}$.

After modifying an unramified $k$-character of $\rM'$, we can assume that $\Lambda_{\rM',1}$ and $\Lambda_{\rM',2}$ have the same central character on $Z_{\rM'}$. 
In fact, we have $Z_{\cO_{F_{\rM}}}'\subset J_{\rM}'\subset\tilde{J}_{\rM}'$, hence the central characters of $\Lambda_{\rM',1}$ and $\Lambda_{\rM',2}$ coincide on $Z_{\cO_{F_{\rM}}}'$. On the other hand, since $Z_{\omega_{F_{\rM}}}\cong\mathbb{Z}^{m}$ for an integer $m$ decided by $\rM$, a character of a sub-$\mathbb{Z}$-module of $Z_{\omega_{F_{\rM}}}$ can be extended to $Z_{\omega_{F_{\rM}}}$. In particular, we can extend a character of  $Z_{\omega_{F_{\rM}}}'$ to $Z_{\omega_{F_{\rM}}}$, then to $\rM$ by Lemma \ref{lem 0020}, finally restricting to $\rM'$. Hence we prove that a character of $Z_{\omega_{F_{\rM}}}'$ can be extended to $\rM'$. Combining with the above discussion, we conclude that there is an unramified $k$-character $\chi_1$ of $\rM'$, such that $\Lambda_{\rM',1}\otimes\chi_1\vert_{Z_{\rM'}\tilde{J}_{\rM}'}\cong\Lambda_{\rM',2}\vert_{Z_{\rM'}\tilde{J}_{\rM}'}$. By the Frobenius reciprocity, there is an injection 
\begin{equation}
\label{equation 0021}
\Lambda_{\rM',1}\otimes\chi_1\hookrightarrow\Lambda_{\rM',2}\otimes\ind_{Z_{\rM'}\tilde{J}_{\rM}'}^{N_{\rM'}(\tilde{\lambda}_{\rM}')}\mathds{1}.
\end{equation}
As in Remark 2.42 of \cite{C1}, the group $N_{\rM'}(\tilde{\lambda}_{\rM}')$ (see Section \ref{section 1.0} for definition) is a subgroup with finite index of $E_{\rM}^{\times}\tilde{J}_{\rM}\cap\rM'$, where $E_{\rM}^{\times}\cong\prod_{i=1}^m E_i^{\times}$ and $E_{i}$ is a field extension of $F$ for each $1\leq i\leq m$.
Since the quotient group $N_{\rM'}(\tilde{\lambda}_{\rM}')\slash Z_{\rM}'\tilde{J}_{\rM}'$ is isomorphic to a subquotient group of $Z_{\omega_{E_{\rM}}}$, hence a character of $N_{\rM'}(\tilde{\lambda}_{\rM}')\slash Z_{\rM'}\tilde{J}_{\rM}'$ can be extended to a character of $\rM$ by Lemma \ref{lem 0020}, hence a character of $\rM'$. 

Now we look back to Equation \ref{equation 0021}. The $k$-representation $\ind_{Z_{\rM'}\tilde{J}_{\rM}'}^{N_{\rM'}(\tilde{\lambda}_{\rM}')}\mathds{1}$ has finite length and each of its irreducible subquotient is a character of $N_{\rM'}(\tilde{\lambda}_{\rM}')\slash Z_{\rM}'\tilde{J}_{\rM}'$, hence can be viewed as a character of $\rM'$. By the unicity of Jordan-H$\ddot{\mathrm{o}}$lder factors, there exists a character $\chi_2$ of $\rM'$, such that $\Lambda_{\rM',1}\otimes\chi_1\cong\Lambda_{\rM',2}\otimes\chi_2$, since $\chi_1,\chi_2$ are $k$-characters of $\rM'$, applying the induction functor $\ind_{N_{\rM'}(\tilde{\lambda}_{\rM}')}^{\rM'}$ on both sides gives an equivalence that $\pi_1\otimes\chi_1\cong\pi_2\otimes\chi_2$. Define $\chi$ to be $\chi_2\chi_1^{-1}$, which is the required unramified $k$-character of $\rM'$.
\end{proof}

\begin{prop}
\label{rem 0016}
Let $(\tilde{J}_{\rM}',\tilde{\lambda}_{\rM}')$ be a maximal simple $k$-type of $\rM'$, and $\pi'$ an irreducible $k$-representation of $\rM'$ containing $(\tilde{J}_{\rM}',\tilde{\lambda}_{\rM}')$. Then any irreducible subquotient of $\ind_{\tilde{J}_{\rM}'}^{\rM'}\tilde{\lambda}_{\rM}'$ must belong to $[\rM',\pi']_{\rM'}$, or equivalently saying, must be $\rM'$-inertially equivalent to $\pi'$.
\end{prop}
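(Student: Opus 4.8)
The plan is to reduce the statement to an inertial equivalence of supercuspidal supports at the level of $\rM$, where the corresponding fact for $\mathrm{GL}$-type Levi subgroups is already available. First I would take an irreducible subquotient $\pi''$ of $\ind_{\tilde{J}_{\rM}'}^{\rM'}\tilde{\lambda}_{\rM}'$. Since $\ind_{\tilde{J}_{\rM}'}^{\rM'}\tilde{\lambda}_{\rM}'$ is (up to semisimplification) a direct summand of $\res_{\rM'}^{\rM}\ind_{J_{\rM}}^{\rM}\tilde{\lambda}_{\rM}$, and $\tilde{\lambda}_{\rM}=\ind_{J_{\rM}}^{\tilde{J}_{\rM}}\lambda_{\rM}$, any such $\pi''$ embeds (or maps onto) some irreducible component of $\pi_0\vert_{\rM'}$ for some irreducible $k$-representation $\pi_0$ of $\rM$ containing $(J_{\rM},\lambda_{\rM})$ — this is exactly the mechanism already used in the proof of Lemma~\ref{lem 0012}. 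So $\pi''$ contains the maximal simple $k$-type $(\tilde{J}_{\rM}',\tau'')$ for some irreducible subquotient $\tau''$ of $\tilde{\lambda}_{\rM}\vert_{\tilde{J}_{\rM}'}$; in particular $\pi''$ is cuspidal and contains a maximal simple $k$-type \emph{defined from $(J_{\rM},\lambda_{\rM})$}.

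Next I would argue that $\pi'$ and $\pi''$ are $\rM'$-inertially equivalent. The key point is that both $\pi'$ and $\pi''$ contain maximal simple $k$-types that come from the \emph{same} pair $(J_{\rM},\lambda_{\rM})$ of $\rM$, namely $(\tilde{J}_{\rM}',\tilde{\lambda}_{\rM}')$ and $(\tilde{J}_{\rM}',\tau'')$. By Lemma~\ref{lemma 004}(2) together with the description of $\tilde{J}_{\rM}$ acting on the components of $\tilde{\lambda}_{\rM}\vert_{\tilde{J}_{\rM}'}$, any two maximal simple $k$-types defined from a common $(J_{\rM},\lambda_{\rM})$ are $\tilde{J}_{\rM}$-conjugate; and conjugation by an element $\alpha\in\tilde{J}_{\rM}$ twists $\lambda_{\rM}$ by a $k$-quasicharacter of $F^{\times}$ restricted to $J_{\rM}$. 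Passing to the corresponding cuspidal representations of $\rM$, the representation $\pi_0'$ containing $(\tilde{J}_{\rM}',\tau'')$ is a component of $(\pi_0\otimes\chi)\vert_{\rM'}$ for a suitable $k$-quasicharacter $\chi$ of $F^{\times}$, while $\pi'$ is a component of $\pi_0\vert_{\rM'}$; but $(\pi_0\otimes\chi)\vert_{\rM'}\cong\pi_0\vert_{\rM'}$, so $\pi'$ and $\pi_0'$ are components of the same restriction. Finally, once $\pi''$ and $\pi'$ are both components of a single $\pi_0\vert_{\rM'}$, Proposition~\ref{lem 0019} (applied with the common type $(\tilde{J}_{\rM}',\tilde{\lambda}_{\rM}')$, after noting $\pi''$ contains it up to the identification just made) — or more directly the fact that all irreducible components of $\pi_0\vert_{\rM'}$ differ only by conjugation and hence have the same inertial class — gives $\pi''\cong\pi'\otimes\theta$ for an unramified $k$-character $\theta$ of $F^{\times}$, i.e. $[\rM',\pi'']_{\rM'}=[\rM',\pi']_{\rM'}$.

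I expect the main obstacle to be the bookkeeping in the second step: making precise the passage between "$\pi''$ contains the type $(\tilde{J}_{\rM}',\tau'')$ defined from $(J_{\rM},\lambda_{\rM})$" and "$\pi''$ is a component of $\pi_0\vert_{\rM'}$ up to an unramified twist". One has to be careful that the twist implicit in Lemma~\ref{lemma 004}(2) is by a $k$-quasicharacter of $F^{\times}$ (not merely unramified), and to check that this twist is absorbed when one restricts from $\rM$ to $\rM'$ and then re-records the resulting ambiguity as an \emph{unramified} $k$-character of $F^{\times}$ on the $\rM'$-side — this is exactly what Proposition~\ref{lem 0019} is designed to supply, so the cleanest route is to reduce everything to an application of that proposition together with the conjugacy of the components of $\pi_0\vert_{\rM'}$. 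The remaining steps (exactness of induction, Mackey decomposition, finite length) are routine and already invoked in the proofs of Lemma~\ref{lemma 001} and Lemma~\ref{lem 0012}.
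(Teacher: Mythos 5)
There is a genuine gap, and it sits in your final step. After arranging that $\pi'$ and $\pi''$ are both irreducible components of the restriction $\pi_0\vert_{\rM'}$ of a single cuspidal representation $\pi_0$ of $\rM$, you conclude that they ``differ only by conjugation and hence have the same inertial class''. The components of $\pi_0\vert_{\rM'}$ are indeed $\rM$-conjugate, but $\rM$-conjugacy does \emph{not} preserve $\rM'$-inertial classes: being components of the same $\pi_0\vert_{\rM'}$ only gives $\rM$-inertial equivalence, which is strictly weaker than the $\rM'$-inertial equivalence asserted in Proposition \ref{rem 0016}. This is precisely the phenomenon recorded in Remark \ref{rem 0023}(2) (two maximal simple $k$-types defined from the same $(J_{\rM},\lambda_{\rM})$ can give \emph{different} $\rG'$-inertial classes) and realised concretely in Example \ref{ex 0029}, where $\Pi_1$ and $\Pi_2$ are both components of the restriction of one supercuspidal representation of $\mathrm{GL}_2(F)$ yet lie in different $\mathrm{SL}_2(F)$-inertial classes. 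For the same reason your appeal to Proposition \ref{lem 0019} is not available: that proposition requires $\pi'$ and $\pi''$ to contain the \emph{same} type $(\tilde{J}_{\rM}',\tilde{\lambda}_{\rM}')$, whereas your identification of $\tau''$ with $\tilde{\lambda}_{\rM}'$ is only through conjugation by an element of $\tilde{J}_{\rM}$, which need not lie in $\rM'$; so ``$\pi''$ contains $\tilde{\lambda}_{\rM}'$ up to the identification'' does not hold in the sense needed.

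The missing ingredient is to pin the type of $\pi''$ down to an $\rM'$-conjugate of $(\tilde{J}_{\rM}',\tilde{\lambda}_{\rM}')$ itself, not merely to some component of $\tilde{\lambda}_{\rM}\vert_{\tilde{J}_{\rM}'}$. The paper does this directly: $\ind_{J_{\rM}}^{\rM}\lambda_{\rM}$ is cuspidal (Vign\'eras, IV.1.6 of \cite{V2}), hence so is $\ind_{\tilde{J}_{\rM}'}^{\rM'}\tilde{\lambda}_{\rM}'$ and every irreducible subquotient $\pi''$; such a $\pi''$ contains some maximal simple $k$-type, which by Mackey theory applied inside $\rM'$ is weakly intertwined with $(\tilde{J}_{\rM}',\tilde{\lambda}_{\rM}')$ in $\rM'$, and by Theorem 3.25 of \cite{C1} (weak intertwining implies conjugacy) it is $\rM'$-conjugate to it; therefore $\pi''$ contains $(\tilde{J}_{\rM}',\tilde{\lambda}_{\rM}')$ and only then does Proposition \ref{lem 0019} yield the unramified twist. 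Your first step (cuspidality and the existence of a type defined from $(J_{\rM},\lambda_{\rM})$ inside $\pi''$) is fine in spirit, but the detour through $\tilde{J}_{\rM}$-conjugacy and restriction from $\rM$ discards exactly the information the intertwining-implies-conjugacy theorem is there to retain.
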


\begin{proof}
By Proposition $\mathrm{IV.}$1.6 of \cite{V2}, we know that $\ind_{J_{\rM}}^{\rM}\lambda_{\rM}$ is cuspidal, hence its sub-representation $\ind_{\tilde{J}_{\rM}'}^{\rM'}\tilde{\lambda}_{\rM}'$ is cuspidal as well. Then an irreducible subquotient $\pi_0$ is cuspidal, which contains a maximal simple $k$-type $(J_0',\lambda_0')$, which is weakly intertwined with $(\tilde{J}_{\rM}',\tilde{\lambda}_{\rM}')$ in $\rM'$ by Mackey's theory. By the property of weakly intertwining implying conjugacy of maximal simple $k$-types of $\rM'$ in Theorem 3.25 of \cite{C1}, we conclude that a maximal simple $k$-type contained in $\pi_0$ must $\rM'$-conjugate to $(\tilde{J}_{\rM}',\tilde{\lambda}_{\rM}')$, and hence $\pi_0$ contains $(\tilde{J}_{\rM}',\tilde{\lambda}_{\rM}')$. By Proposition \ref{lem 0019}, we conclude that $\pi_0$ is $\rM'$-inertially equivalent to $\pi'$.
\end{proof}

\begin{rem}
\label{rem 0023}
\begin{enumerate}
\item Lemma \ref{lem 0019} and Proposition $\ref{rem 0016}$ give a bijection between the set of $\rM'$-conjugacy classes of maximal simple $k$-types and the set of $\rM'$-inertially equivalent cuspidal classes:
$$\nu: [\tilde{J}_{\rM}',\tilde{\lambda}_{\rM}']_{\rM'}\mapsto[\rM',\pi']_{\rM'},$$
where $[\tilde{J}_{\rM}',\tilde{\lambda}_{\rM}']_{\rM'}$ is the $\rM'$-conjugacy class of $(\tilde{J}_{\rM}',\tilde{\lambda}_{\rM}')$, and $\pi'$ is an irreducible cuspidal $k$-representation contains $(\tilde{J}_{\rM}',\tilde{\lambda}_{\rM}')$.
\item Let $(\tilde{J}_{\rM}',\tilde{\lambda}_1')$ and $(\tilde{J}_{\rM}',\tilde{\lambda}_2')$ be two different maximal simple $k$-types defined by $(J_{\rM},\lambda_{\rM})$. When $\rM'=\rG'=\mathrm{SL}_n(F)$ by Lemma \ref{lem 0012}, the associated $\rG'$-inertially equivalent cuspidal classes defined of $(\tilde{J}_{\rM'},\tilde{\lambda}_i),i=1,2$ are differen. When $\rM'$ is a proper Levi of $\mathrm{SL}_n(F)$ by Remark \ref{rem 0033} , the associated $\rG'$-inertially equivalent cuspidal classes may be the same.
\end{enumerate}
\end{rem}

\subsection{Supercuspidal blocks of $\mathrm{Rep}_k(\rM')$}
\label{section 0028}
In this Section, we give a block decomposition of the supercuspidal sub-category $\mathrm{Rep}_k(\rM')_{\mathcal{SC}}$ of $\mathrm{Rep}_k(\rM')$, of which the blocks are called supercuspidal blocks of $\mathrm{Rep}_k(\rM')$ as defined in the end of Section \ref{section 4}. Let $[\rM',\pi']_{\rM'}$ be a $\rM'$-inertially equivalent supercuspidal class of $\rM'$, denote by $\mathrm{Rep}_k(\rM')_{[\rM',\pi']}$ the full sub-category of $\mathrm{Rep}_k(\rM')$, such that the irreducible sub-quotients of an object of $\mathrm{Rep}_k(\rM')_{[\rM',\pi']}$ belong to $[\rM',\pi']_{\rM'}$. As in Proposition of \cite{V2}[\S $\mathrm{III}$], a sub-category $\mathrm{Rep}_k(\rM')_{[\rM',\pi']}$ is non-split, and a block of $\mathrm{Rep}_k(\rM')_{\mathcal{SC}}$ is generated by a finitely number of subcategories in the form of $\mathrm{Rep}_k(\rM')_{[\rM',\pi']}$.

Let $(J_{\rM},\lambda_{\rM})$ be a maximal simple supercuspidal $k$-type of $\rM$, and $(\tilde{J}_{\rM}',\tilde{\lambda}_{\rM}')$ be a maximal simple supercuspidal $k$-type defined from $(J_{\rM},\lambda_{\rM})$ as explained in Section \ref{section 1.0}. Recall that $\cP_{\tilde{\lambda}_{\rM}'}$ is the projective cover of $\tilde{\lambda}_{\rM}'$. By Lemma \ref{lem 0012}, its irreducible subquotients are maximal simple supercuspidal $k$-types of $\rM'$ as well, and we denote by $\mathcal{I}(\tilde{\lambda}_{\rM}')$ the set of isomorphic classes of irreducible subquotients of $\cP_{\tilde{\lambda}_{\rM}'}$. We define a set of $\rM'$-inertially equivalent supercuspidal classes $\mathcal{SC}(\tilde{\lambda}_{\rM}')$, such that there is a bijection
$$\nu:\cI(\tilde{\lambda}_{\rM}')\rightarrow \mathcal{SC}(\tilde{\lambda}_{\rM}'),$$
which is given as in Remark \ref{rem 0023}.

\begin{prop}
\label{prop 0017}
Suppose the image $\mathcal{SC}(\tilde{\lambda}_{\rM}')$ is not a singleton. For any non-trivial disjoint union $\mathcal{SC}(\tilde{\lambda}_{\rM}')=\mathcal{SC}_1\sqcup\mathcal{SC}_2$, and let $\cI(\tilde{\lambda}_{\rM}')=\cI_1\sqcup\cI_2$ such that $\mathcal{SC}_1=\nu(\cI_1)$ and $\mathcal{SC}_2=\nu(\cI_2)$, it is not possible to decompose $\ind_{\tilde{J}_{\rM}'}^{\rM'}\cP_{\tilde{\lambda}_{\rM}'}$ as $P_1\oplus P_2$, where any irreducible subquotients of $P_1$ belongs to $\mathcal{SC}_1$ and any irreducible subquotients of $P_2$ belongs to $\mathcal{SC}_2$.
\end{prop}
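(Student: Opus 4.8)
The plan is to show that any putative decomposition $\ind_{\tilde{J}_{\rM}'}^{\rM'}\cP_{\tilde{\lambda}_{\rM}'}\cong P_1\oplus P_2$ respecting the partition $\mathcal{SC}(\tilde{\lambda}_{\rM}')=\mathcal{SC}_1\sqcup\mathcal{SC}_2$ would force an analogous decomposition of the type-level object $\cP_{\tilde{\lambda}_{\rM}'}$ itself, contradicting its indecomposability as a projective cover. First I would recall that $\cP_{\tilde{\lambda}_{\rM}'}$ is a projective $k$-representation of $\tilde{J}_{\rM}'$ with a single irreducible quotient, hence indecomposable; by Lemma \ref{lem 0012} its irreducible subquotients are maximal simple supercuspidal $k$-types of $\rM'$, each lying in $\cI(\tilde{\lambda}_{\rM}')$, and by Proposition \ref{rem 0016} together with the bijection $\nu$ of Remark \ref{rem 0023}, an irreducible cuspidal $k$-representation of $\rM'$ lies in the inertial class $\nu([\tilde{J}_{\rM}',\tau']_{\rM'})$ exactly when it contains $(\tilde{J}_{\rM}',\tau')$ up to $\rM'$-conjugacy.

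Next I would use the type-theoretic description of $\mathrm{Rep}_k(\rM')_{[\rM',\pi']}$: since $(\tilde{J}_{\rM}',\tau')$ is a maximal simple supercuspidal $k$-type contained in a cuspidal $\pi'$ with $\nu$-image $[\rM',\pi']_{\rM'}$, an object $\Pi$ of $\mathrm{Rep}_k(\rM')$ lies in $\mathrm{Rep}_k(\rM')_{[\rM',\pi']}$ if and only if, roughly speaking, $\Pi$ is generated by (or at least detected on) its $(\tilde{J}_{\rM}',\tau')$-isotypic part, and the isotypic components attached to types in distinct $\rM'$-conjugacy classes are ``orthogonal''. Concretely, suppose $\ind_{\tilde{J}_{\rM}'}^{\rM'}\cP_{\tilde{\lambda}_{\rM}'}\cong P_1\oplus P_2$ with all irreducible subquotients of $P_i$ in $\mathcal{SC}_i$. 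Applying the restriction (or the Hom-functor $\Hom_{\tilde{J}_{\rM}'}(\tau,-)$ for $\tau$ running over representatives of $\cI(\tilde{\lambda}_{\rM}')$) and using Frobenius reciprocity together with Mackey theory, I would identify $\Hom_{\tilde{J}_{\rM}'}(\tilde{\lambda}_{\rM}', \res_{\tilde{J}_{\rM}'}^{\rM'} P_i)$ with a corresponding direct summand of $\cP_{\tilde{\lambda}_{\rM}'}$; more precisely, the $\tilde{J}_{\rM}'$-socle/cosocle filtration forces the type $\tilde{\lambda}_{\rM}'$ itself, which is a quotient of $\cP_{\tilde{\lambda}_{\rM}'}$, to appear only in the summand $P_{i_0}$ with $[\rM',\pi']\in\mathcal{SC}_{i_0}$, while some other type $\tau'\in\cI_{i_1}$ with $i_1\neq i_0$ (which exists because the partition is non-trivial and $\nu$ is a bijection onto a non-singleton) must appear as a subquotient of $P_{i_1}$. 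This produces a genuine splitting of $\cP_{\tilde{\lambda}_{\rM}'}$ as a $\tilde{J}_{\rM}'$-representation into a piece containing the quotient $\tilde{\lambda}_{\rM}'$ and a complementary nonzero piece containing $\tau'$, contradicting indecomposability.

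The main obstacle I expect is the bookkeeping at the level of types when $\rM'$ is a \emph{proper} Levi subgroup: by Remark \ref{rem 0033}, two of the types $(\tilde{J}_{\rM}',\tilde{\lambda}_{\rM}')$ and $(\tilde{J}_{\rM}',\tau')$ appearing in $\cP_{\tilde{\lambda}_{\rM}'}$ may be $\rM'$-conjugate, so distinct elements of $\cI(\tilde{\lambda}_{\rM}')$ need not map to distinct inertial classes under $\nu$, and one must carefully pass from $\cI(\tilde{\lambda}_{\rM}')$ to the quotient $\mathcal{SC}(\tilde{\lambda}_{\rM}')$ before extracting the contradiction — the induced projective $\ind_{\tilde{J}_{\rM}'}^{\rM'}\cP_{\tilde{\lambda}_{\rM}'}$ may well decompose, just never along the $\mathcal{SC}_1/\mathcal{SC}_2$ partition. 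The way to handle this is to argue entirely in terms of the $\tilde{J}_{\rM}'$-representation $\cP_{\tilde{\lambda}_{\rM}'}$: show that a partition-respecting decomposition of $\ind_{\tilde{J}_{\rM}'}^{\rM'}\cP_{\tilde{\lambda}_{\rM}'}$ pulls back, via $\Hom_{\tilde{J}_{\rM}'}(\tilde{\lambda}_{\rM}',-)$ and the fact that the type $\tilde{\lambda}_{\rM}'$ detects exactly the inertial class $\nu([\tilde{J}_{\rM}',\tilde{\lambda}_{\rM}']_{\rM'})$, to a decomposition of $\cP_{\tilde{\lambda}_{\rM}'}$ into two nonzero summands (nonzero because $\mathcal{SC}(\tilde{\lambda}_{\rM}')$ is not a singleton), and invoke that a projective cover is indecomposable. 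The remaining technical point — that the full-subcategory description of $\mathrm{Rep}_k(\rM')_{[\rM',\pi']}$ interacts well enough with the functors $\ind_{\tilde{J}_{\rM}'}^{\rM'}$, $\res_{\tilde{J}_{\rM}'}^{\rM'}$ to make the pullback argument go through — follows from the weak-intertwining-implies-conjugacy property of types (Theorem 3.25 of \cite{C1}, used already in the proof of Proposition \ref{rem 0016}) and the exactness of compact induction.
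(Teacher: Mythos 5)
Your preparatory steps (Lemma \ref{lem 0012}, Proposition \ref{rem 0016}, the bijection $\nu$ of Remark \ref{rem 0023}, weak-intertwining-implies-conjugacy) are the right ingredients and match the paper's, but the decisive step of your plan has a genuine gap: an assumed partition-respecting decomposition $\ind_{\tilde{J}_{\rM}'}^{\rM'}\cP_{\tilde{\lambda}_{\rM}'}\cong P_1\oplus P_2$ does \emph{not} ``pull back'' to a direct-sum decomposition of $\cP_{\tilde{\lambda}_{\rM}'}$, and nothing in your sketch produces one. What Frobenius reciprocity (or the identity double coset in Mackey's decomposition) gives is an embedding $\alpha:\cP_{\tilde{\lambda}_{\rM}'}\hookrightarrow\res_{\tilde{J}_{\rM}'}^{\rM'}P_1\oplus\res_{\tilde{J}_{\rM}'}^{\rM'}P_2$; an indecomposable module can perfectly well sit inside a direct sum with both projections nonzero (a simple module embedded ``diagonally'' already does this), so the configuration you reach is not in conflict with indecomposability, and since $\cP_{\tilde{\lambda}_{\rM}'}$ \emph{is} indecomposable, the assumed splitting of the induced module can never literally induce a splitting of $\cP_{\tilde{\lambda}_{\rM}'}$. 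The contradiction has to come from a finer property, namely that the projective cover has \emph{simple cosocle}: the paper shows (using the filtration $V_i=\ind_{\tilde{J}_{\rM}'}^{\rM'}W_i$ obtained from Lemma \ref{lem 0012}, together with Lemma \ref{lem 0018}, Proposition \ref{rem 0016} and Theorem 3.25 of \cite{C1}) that $\alpha(\cP_{\tilde{\lambda}_{\rM}'})$ is not contained in $\res_{\tilde{J}_{\rM}'}^{\rM'}P_1$, so the image of $\cP_{\tilde{\lambda}_{\rM}'}$ in $\res_{\tilde{J}_{\rM}'}^{\rM'}P_2$ is a nonzero quotient all of whose irreducible subquotients lie in $\cI_2$; this forces an irreducible quotient $\tilde{\lambda}_{i_0}'\neq\tilde{\lambda}_{\rM}'$ of $\cP_{\tilde{\lambda}_{\rM}'}$, contradicting Proposition 41 c) of \cite{Serre}. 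Indecomposability alone is not a strong enough invariant to finish the argument you outline.

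Two further points. First, even the intermediate claim that a type of $\cI_2$ cannot occur as a $\tilde{J}_{\rM}'$-subquotient of $\res_{\tilde{J}_{\rM}'}^{\rM'}P_1$ needs care: $P_1$ has infinite length, so one cannot just invoke Jordan--H\"older; the paper handles this by pushing the finite filtration $W_0\subset\dots\subset W_s=\cP_{\tilde{\lambda}_{\rM}'}$ through $\ind_{\tilde{J}_{\rM}'}^{\rM'}$, projecting it to $P_1$ and $P_2$, and applying Lemma \ref{lem 0018} before using the intertwining argument. Second, if you want something closer in spirit to your plan, you could note that $\cP_{\tilde{\lambda}_{\rM}'}$ is a direct summand of $\res_{\tilde{J}_{\rM}'}^{\rM'}\ind_{\tilde{J}_{\rM}'}^{\rM'}\cP_{\tilde{\lambda}_{\rM}'}$ and has local endomorphism ring, hence by the exchange property is a summand of $\res_{\tilde{J}_{\rM}'}^{\rM'}P_1$ or of $\res_{\tilde{J}_{\rM}'}^{\rM'}P_2$; but even then the contradiction must be extracted from the type content of the two sides (the same filtration and conjugacy arguments as above), not from indecomposability, so this route does not shorten the proof.
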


\begin{proof}
We abbreviate $\ind_{\tilde{J}_{\rM}'}^{\rM'}\mathcal{P}_{\tilde{\lambda}_{\rM}'}$ by $\cP_{\rM'}$ in this proof. By Theorem \ref{thm 009}, the irreducible subquotients of $\cP_{\rM'}$ are supercuspidal. Suppose the contrary that, there exists a non-trivial disjoint union $\mathcal{SC}(\tilde{\lambda}_{\rM}')=\mathcal{SC}_1\sqcup\mathcal{SC}_2$, such that $\cP_{\rM'}=P_1\oplus P_2$ verifying the conditions in the statement of the propersition. Without loss of generality, we suppose $\tilde{\lambda}_{\rM}'\in\cI_1$. Let $\iota_{\rM}'$ be a maximal simple supercuspidal $k$-type in $\cI_2$, and $\tau'$ be a supercuspidal $k$-representation of $\rM'$ containing $\iota_{\rM}'$. Hence $\tau'$ is a sub-representation of $\ind_{\tilde{J}_{\rM}'}^{\rM'}\iota_{\rM'}$, and the later is a subquotient of $\cP_{\rM'}$, hence $P_2$ is non-trivial ($P_1$ is also non-trivial since $\tilde{\lambda}_{\rM}'\in\cI_1$).


By Lemma \ref{lem 0012}, there exists a filtration of $\{0\}=W_0\subset W_1...\subset W_{s}=\mathcal{P}_{\tilde{\lambda}_{\rM}'}$ for an $s\in\mathbb{N}$, such that each quotient $\tilde{\lambda}_i':=W_i\slash W_{i-1},1\leq i\leq s$ is irreducible and $(\tilde{J}_{\rM}',\tilde{\lambda}_i')$ is a maximal simple supercuspidal $k$-type of $\rM'$ defined also from $(J_{\rM},\lambda_{\rM})$. In particular, $\tilde{\lambda}_{\rM}'$ as well as $\iota_{\rM}'$ are isomorphic to $\tilde{\lambda}_i'$ for some $0\leq i\leq s$ respectively. Now define $\tilde{\lambda}_0'$ to be null, and denote by $V_i=\ind_{\tilde{J}_{\rM}'}^{\rM'} W_i$, then $\{ V_i\}_{0\leq i\leq s}$ is a filtration of $\cP_{\rM'}$ and $V_i\slash V_{i-1}\cong\ind_{\tilde{J}_{\rM}'}^{\rM'}\tilde{\lambda}_i', 1\leq i\leq s$. Denote by $V_{i,1}$ the image of $V_i$ in $P_1$ under the canonical projection, and $V_{i,2}$ the image of $V_i$ in $P_2$ under the canonical projection. Hence $\{ V_{i,1} \}_{0\leq i\leq s}$ (resp. $\{V_{i,2}\}_{0\leq i\leq s}$) forms a filtration of $P_1$ (resp. $P_2$). By Proposition \ref{rem 0016}, the quotient $V_{i,1}\slash V_{i-1,1}$ (resp. $V_{i,2}\slash V_{i-1,2}$) is non-trivial if and only if $\tilde{\lambda}_i'\in\cI_1$ (resp. $\tilde{\lambda}_i'\in\cI_2$).

Now we consider the canonical injective morphism 
$$\alpha:\cP_{\tilde{\lambda}_{\rM}'}\hookrightarrow\res_{\tilde{J}_{\rM}'}^{\rM'}\cP_{\rM'}.$$
Under the above assumption, we have $\res_{\tilde{J}_{\rM}'}^{\rM'}\cP_{\rM'}\cong\res_{\tilde{J}_{\rM}'}^{\rM'}P_1\oplus\res_{\tilde{J}_{\rM}'}^{\rM'}P_2$. Since we consider a representation of infinite length, the unicity of Jordan-H$\ddot{\mathrm{o}}$lder factors is not sufficient, and we need a simple but practical lemma as below to continue the proof:
\end{proof}

\begin{lem}
\label{lem 0018}
Let $\rG$ be a locally pro-finite group, and $\pi$ a $k$-representation of $\rG$. Let $\pi_1$ be a sub-representation of $\pi$. Suppose $\tau$ is an irreducible subquotient of $\pi$, then $\tau$ is either isomorphic to an irreducible subquotient of $\pi_1$ or to an irreducible subquotient of $\pi\slash\pi_1$. 
\end{lem}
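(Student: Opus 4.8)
The plan is to realise $\tau$ as a genuine subquotient $B/A$ of $\pi$, then to ``intersect'' this subquotient with $\pi_1$ and to ``project'' it onto $\pi/\pi_1$; the two resulting objects sit in a short exact sequence, and because $\tau$ is irreducible one of them must vanish while the other is isomorphic to $\tau$. That is precisely the claimed dichotomy.

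Concretely, first I would fix $\rG$-subrepresentations $A\subseteq B$ of $\pi$ with $B/A\cong\tau$. Setting $A_1=A\cap\pi_1$ and $B_1=B\cap\pi_1$, these are $\rG$-stable with $A_1\subseteq B_1$, and the composite $B_1\hookrightarrow B\twoheadrightarrow B/A$ has kernel $A\cap B_1=A_1$, so it induces an injection $B_1/A_1\hookrightarrow B/A=\tau$. On the other side, letting $\bar A,\bar B$ be the images of $A,B$ in $\pi/\pi_1$, the surjection $B\twoheadrightarrow\bar B/\bar A$ kills $A$ and hence factors through a surjection $\tau=B/A\twoheadrightarrow\bar B/\bar A$. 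The key step is to identify the kernel of $\tau\twoheadrightarrow\bar B/\bar A$ with the image of $B_1/A_1\hookrightarrow\tau$, i.e. to check $B\cap(A+\pi_1)=A+(B\cap\pi_1)$; this is the modular (Dedekind) law for the lattice of subrepresentations, valid because $A\subseteq B$, and it yields a short exact sequence $0\to B_1/A_1\to\tau\to\bar B/\bar A\to0$ of $\rG$-representations.

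To conclude: since $\tau$ is irreducible, either $B_1/A_1=0$, forcing $\bar B/\bar A\cong\tau$ and exhibiting $\tau$ as an irreducible subquotient of $\pi/\pi_1$ (as $\bar A\subseteq\bar B\subseteq\pi/\pi_1$), or $B_1/A_1\cong\tau$, exhibiting $\tau$ as an irreducible subquotient of $\pi_1$ (as $A_1\subseteq B_1\subseteq\pi_1$). This is a routine diagram chase rather than a substantial argument; the only point requiring a little care is the bookkeeping for the modular-law identity and the observation that all the intermediate objects ($A_1$, $B_1$, $\bar A$, $\bar B$, the various quotients) are automatically $\rG$-stable, being intersections, sums, images and quotients of subrepresentations.
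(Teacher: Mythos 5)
Your proof is correct: realising $\tau$ as $B/A$, applying the modular law $B\cap(A+\pi_1)=A+(B\cap\pi_1)$ to obtain the short exact sequence $0\to B_1/A_1\to\tau\to\bar{B}/\bar{A}\to 0$, and invoking irreducibility is exactly the standard argument. The paper dismisses this lemma with ``easy to check,'' so your write-up simply supplies the routine verification the author had in mind.
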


\begin{proof}
Easy to check.
\end{proof}

\begin{proof}[Continue the proof of Proposition \ref{prop 0017}]
Suppose $\alpha(\cP_{\tilde{\lambda}_{\rM}'})\subset\res_{\tilde{J}_{\rM}'}^{\rM'}P_1$. Let $\iota_{\rM}'\in\cI_2$ be an irreducible subquotient of $\mathcal{P}_{\tilde{\lambda}_{\rM}'}$, by Lemma \ref{lem 0018} there exists $1\leq i\leq s$, such that $\iota_{\rM}'$ is an irreducible subquotient of $V_{i,1}\slash V_{i-1,1}$, and the later is a subquotient of $\ind_{\tilde{J}_{\rM}'}^{\rM'}\tilde{\lambda}_i'$. In other words, $\iota_{\rM}'$ is an irreducible subquotient of $\ind_{\tilde{J}_{\rM}'}^{\rM'}\tilde{\lambda}_i'$. Applying Mackey's theorem, it is equivalent to say that $\iota_{\rM}'$ is weakly intertwined with $\tilde{\lambda}_i'$ in $\rM'$ (see Section \ref{section 1.0}  for weakly intertwining), hence by Theorem 3.25 of \cite{C1} they are $\rM'$-conjugate to each other, hence they define the same $\rM'$-inertially equivalent class as in Remark \ref{rem 0023}. Meanwhile, by the above analysis, we know that $\nu(\tilde{\lambda}_i)\in\mathcal{SC}_1$ and $\nu(\iota_{\rM}')\in\mathcal{SC}_2$, which is a contradition. Hence $\alpha(\cP_{\tilde{\lambda}_{\rM}'})\cap \res_{\tilde{J}_{\rM}'}^{\rM'}P_1\neq \alpha(\cP_{\tilde{\lambda}_{\rM}'})$.

Now we consider $\alpha(\cP_{\tilde{\lambda}_{\rM}'})\slash (\alpha(\cP_{\tilde{\lambda}_{\rM}'})\cap \res_{\tilde{J}_{\rM}'}^{\rM'}P_1)$, which is non-null as above, and is a sub-representation of $\res_{\tilde{J}_{\rM}'}^{\rM'}\cP_{\rM'}\slash\res_{\tilde{J}_{\rM}'}^{\rM'}P_1\cong\res_{\tilde{J}_{\rM}'}^{\rM'}P_2$. By the same manner as above, we conclude that each irreducible subquotient of $\alpha(\cP_{\tilde{\lambda}_{\rM}'})\slash (\alpha(\cP_{\tilde{\lambda}_{\rM}'})\cap \res_{\tilde{J}_{\rM}'}^{\rM'}P_1)$ belongs to $\cI_2$, which implies that there exists $\tilde{\lambda}_{i_0}'\in\cI_2$ such that $\cP_{\tilde{\lambda}_{\rM}'}\rightarrow\tilde{\lambda}_{i_0}'$. Since $\tilde{\lambda}_{i_0}'$ is different from $\tilde{\lambda}_{\rM}'$, the maximal semisimple quotient of $\cP_{\tilde{\lambda}_{\rM}'}$ contains $\tilde{\lambda}_{i_0}'\oplus\tilde{\lambda}_{\rM}'$, which contradicts to the fact that $\cP_{\tilde{\lambda}_{\rM}'}$ is the projective cover of $\tilde{\lambda}_{\rM}'$ by Proposition 41 $c)$ \cite{Serre}. Hence we finish the proof.
\end{proof}

\begin{lem}
\label{lem 0024}
Let $(\tilde{J}_{\rM}',\tilde{\lambda}_1')$ and $(\tilde{J}_{\rM}',\tilde{\lambda}_2')$ be two maximal simple supercuspidal $k$-types. Suppose $\tilde{\lambda}_2'\in\cI(\tilde{\lambda}_1')$, then $\tilde{\lambda}_1'\in\cI(\tilde{\lambda}_2')$ (see the beginning of this section for the definition of $\cI(\cdot)$).
\end{lem}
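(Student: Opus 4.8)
The plan is to reduce the statement to the classical symmetry of the Cartan matrix of a finite group, using the $\beta$-extension description of the projective covers involved. Recall from Section~\ref{section 1.0}, Remark~\ref{rem 0014} and the proof of Lemma~\ref{lem 0012} that the maximal simple supercuspidal $k$-types occurring here are all built from one fixed maximal simple supercuspidal $k$-type $(J_{\rM},\lambda_{\rM})$ of $\rM$, with $\lambda_{\rM}=\kappa_{\rM}\otimes\sigma_{\rM}$, and that each irreducible subquotient of $\cP_{\tilde{\lambda}_{\rM}'}$ is isomorphic to an irreducible component of $\tilde{\lambda}_{\rM}\vert_{\tilde{J}_{\rM}'}$. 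By the structure theory of maximal simple $k$-types of $\rM'$ in \cite{C1} (the analogue of \cite[Lemma 4.8]{Helm}, obtained by tensoring with a fixed $\beta$-extension $\kappa'$), there is an exact equivalence $M\mapsto M\otimes\kappa'$ from $\mathrm{Rep}_k(Q)$, inflated to $\tilde{J}_{\rM}'$, where $Q$ is a finite group (a subquotient of $\tilde{J}_{\rM}/J_{\rM}^1$), onto the full subcategory of $\mathrm{Rep}_k(\tilde{J}_{\rM}')$ of those $k$-representations whose restriction to $\tilde{J}_{\rM}'\cap J_{\rM}^1$ is $\kappa'\vert_{\tilde{J}_{\rM}'\cap J_{\rM}^1}$-isotypic; under this equivalence each type $\tilde{\lambda}_i'$ appearing here corresponds to a simple $k$-representation $\sigma_i'$ of $Q$, and the projective cover $\cP_{\tilde{\lambda}_i'}$ (which has finite length) corresponds to the projective cover $\cP_{\sigma_i'}$ of $\sigma_i'$ in $\mathrm{Rep}_k(Q)$, so that $\cP_{\tilde{\lambda}_i'}\cong\kappa'\otimes\cP_{\sigma_i'}$.

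With this in hand, the hypothesis $\tilde{\lambda}_2'\in\cI(\tilde{\lambda}_1')$ first forces $\tilde{\lambda}_1'$ and $\tilde{\lambda}_2'$ to be built from the same $(J_{\rM},\lambda_{\rM})$ and to carry the same $\kappa'$: indeed the isomorphism $\cP_{\tilde{\lambda}_1'}\cong\kappa'\otimes\cP_{\sigma_1'}$ shows that every irreducible subquotient of $\cP_{\tilde{\lambda}_1'}$ has the form $\kappa'\otimes\sigma''$ with $\sigma''$ an irreducible subquotient of $\cP_{\sigma_1'}$. Applying the inverse equivalence $M\mapsto M\otimes(\kappa')^{-1}$ thus turns $\tilde{\lambda}_2'\in\cI(\tilde{\lambda}_1')$ into the statement $[\cP_{\sigma_1'}:\sigma_2']\geq 1$ in $\mathrm{Rep}_k(Q)$, and the desired conclusion $\tilde{\lambda}_1'\in\cI(\tilde{\lambda}_2')$ into $[\cP_{\sigma_2'}:\sigma_1']\geq 1$. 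These two are equivalent by the symmetry of the Cartan matrix of the finite group $Q$: with $D$ the $\ell$-modular decomposition matrix and $C=(c_{\sigma\sigma'})$ the Cartan matrix one has $C=D^{t}D$, so $[\cP_{\sigma_1'}:\sigma_2']=c_{\sigma_1'\sigma_2'}=c_{\sigma_2'\sigma_1'}=[\cP_{\sigma_2'}:\sigma_1']$, and in particular one is positive if and only if the other is. (Equivalently, $kQ$ is a symmetric algebra, whence $[\cP_S:T]=\dim\Hom_{kQ}(\cP_T,\cP_S)=\dim\Hom_{kQ}(\cP_S,\cP_T)=[\cP_T:S]$.) This would complete the proof.

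The step I expect to be the main obstacle is the reduction itself, namely making sure that the projective cover $\cP_{\tilde{\lambda}_{\rM}'}$, formed in the category of smooth $k$-representations of the pro-finite group $\tilde{J}_{\rM}'$, genuinely corresponds under the $\beta$-extension equivalence to the honest projective cover of a simple module over the finite group $Q$. This is already implicitly used in Section~\ref{section 1.0}, Remark~\ref{rem 0014} and Lemma~\ref{lem 0012}; I would make it explicit by checking that $M\mapsto M\otimes\kappa'$ preserves projectivity, exactness of short sequences, and the head of a module, so that it carries projective covers to projective covers and composition series to composition series. Once this is granted, the rest is formal.
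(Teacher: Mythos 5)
Your endgame is sound, and your route is genuinely different from the paper's: the paper does not invoke Cartan symmetry but instead lifts $\cP_{\tilde{\lambda}_1'}$ to $\tilde{\cO}$ in an $\ell$-modular system $(\tilde{\cK},\tilde{\cO},k)$, shows every irreducible $\tilde{\cK}$-constituent of the lift reduces to something containing $\tilde{\lambda}_1'$, picks a constituent $P_2'$ whose reduction also contains $\tilde{\lambda}_2'$, and then transports $P_2'$ by an element $\alpha\in\tilde{J}_{\rM}$ with $\alpha(\tilde{\lambda}_1')\cong\tilde{\lambda}_2'$, using $\alpha(\cP_{\tilde{\lambda}_1'})\cong\cP_{\tilde{\lambda}_2'}$ (Lemma~\ref{lemma 004}) and the multiplicity-freeness of $\tilde{\lambda}_{\rM}\vert_{\tilde{J}_{\rM}'}$ to see that $P_2'$ is also a constituent of $\cP_{\tilde{\lambda}_2'}\otimes_{\tilde{\cO}}\tilde{\cK}$; in effect the paper re-proves the needed symmetry by hand, with the conjugation action doing the work, whereas you want to quote $C=D^{t}D$ for a finite group outright.

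The genuine gap is exactly the step you flag, and as written it is not a routine verification: the claimed equivalence $M\mapsto M\otimes\kappa'$ between $\mathrm{Rep}_k(Q)$ for a finite group $Q$ and the $\kappa'$-isotypic subcategory of $\mathrm{Rep}_k(\tilde{J}_{\rM}')$, matching the types $\tilde{\lambda}_i'$ with simple $kQ$-modules and $\cP_{\tilde{\lambda}_i'}$ with their projective covers, is established neither in this paper nor in \cite{C1}, and it presupposes precisely what is delicate on the $\mathrm{SL}_n$-side: the restriction of $\kappa_{\rM}$ (equivalently of the Heisenberg representation) to $\tilde{J}_{\rM}'\cap J_{\rM}^1$ need not be irreducible nor $\tilde{J}_{\rM}'$-stable, $\tilde{\lambda}_i'$ need not be isotypic over a single irreducible piece, and extending such a piece to $\tilde{J}_{\rM}'$ can meet cocycle obstructions, so honest Clifford theory only yields modules over a crossed-product algebra, not $kQ$ on the nose. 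However, your reduction can be obtained much more cheaply, with no type theory: $\tilde{\lambda}_1'$ is a smooth irreducible representation of the profinite group $\tilde{J}_{\rM}'$, so its kernel is open, and $U:=\mathrm{Ker}(\tilde{\lambda}_1')\cap(J_{\rM}^1\cap\rM')$ is an open normal pro-$p$ subgroup of $\tilde{J}_{\rM}'$; since $\ell\neq p$ the functor of $U$-invariants is exact, so inflation from the finite group $G_0=\tilde{J}_{\rM}'/U$ is exact, preserves projectivity and indecomposability, and therefore $\cP_{\tilde{\lambda}_1'}$ is the inflation of the projective cover of $\tilde{\lambda}_1'$ over $G_0$; consequently $\tilde{\lambda}_2'$ is trivial on $U$, the same applies to $\cP_{\tilde{\lambda}_2'}$, and the lemma is exactly Cartan symmetry for $G_0$. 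With that substitution your argument closes; as proposed, the $\beta$-extension equivalence remains an unproven, and as stated doubtful, input.
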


\begin{proof}
Let $W(k)$ be the ring of Witt vectors of $k$, and $\cK$ be the fractional field of $W(k)$. Let $\tilde{\cK}$ be a finite field extension of $\cK$, such that $\tilde{\cK}$ contains the $\vert \tilde{J}_{\rM}\slash N\vert$-th roots, where $N$ is the kernel of $\cP_{\tilde{\lambda}_{\rM}}$, and let $\tilde{\cO}$ be its ring of integers. Consider the $\ell$-modular system $(\tilde{\cK},\tilde{\cO},k)$, we have that  $\cP_{\tilde{\lambda}_{\rM}}\otimes_{\tilde{\cO}}\tilde{\cK}$ is semisimple, whose direct components are absolutely irreducible. By Proposition 42 of \cite{Serre}, the projective cover $\cP_{\tilde{\lambda}_1'}$ can be lifted over $\tilde{\cO}$, and we denote the lifting to $\tilde{\cO}$ by $\cP_{\tilde{\lambda}_{\rM}}$ as well. Now we consider $\cP_{\tilde{\lambda}_1'}\otimes_{\tilde{\cO}}\tilde{\cK}$, which is semisimple with finite length. Suppose $P$ is an irreducible component of $\cP_{\tilde{\lambda}_1'}\otimes_{\tilde{\cO}}\tilde{\cK}$, then the semisimplification of its reduction modulo $\ell$ must contain $\tilde{\lambda}_1'$, otherwise it will induce a surjection from $\cP_{\tilde{\lambda}_1'}$ to an irreducible $k$-representation different from $\tilde{\lambda}_1'$, which contradicts with the fact the $\cP_{\tilde{\lambda}_1'}$ is the projective cover of $\tilde{\lambda}_1'$ by Proposition 41 of \cite{Serre}. Since $\tilde{\lambda}_2'$ is a subquotient of $\cP_{\tilde{\lambda}_1'}$, their exists an irreducible direct component $P_2'$ of $\cP_{\tilde{\lambda}_1'}\otimes_{\tilde{\cO}}\tilde{\cK}$, of which the semisimplification of reduction modulo-$\ell$ contains $\tilde{\lambda}_1'$ as well as $\tilde{\lambda}_2'$. Let $\alpha\in\tilde{J}_{\rM}$, such that $\alpha(\tilde{\lambda}_1')\cong\tilde{\lambda}_2'$. By the second part of Lemma \ref{lemma 004}, we have $\alpha(\cP_{\tilde{\lambda}_1'})\cong\cP_{\tilde{\lambda}_2'}$, which implies that $\alpha(P_2')$ is a direct component of $\cP_{\tilde{\lambda}_2'}$. We state that $\alpha$ stabilises $P_2'$. In fact, by the proof of Lemma \ref{lemma 001}, we have $\cP_{\tilde{\lambda}_1'}$ is an indecomposable direct factor of $\cP_{\tilde{\lambda}_{\rM}}$. In particular, the reduction modulo-$\ell$ of each irreducible components of $\cP_{\tilde{\lambda}_{\rM}}\otimes_{\tilde{\cO}}\tilde{\cK}$ is isomorphic to $\tilde{\lambda}_{\rM}$. By the uncity of Jordan-Holdar factors, there exists an irreducible component $P_2$ of $\cP_{\tilde{\lambda}_{\rM}}\otimes_{\tilde{\cO}}\tilde{\cK}$, such that $P_2'$ is an irreducible component of $P_2\vert_{\tilde{J}_{\rM}'}$. Since $\alpha(\tilde{\lambda}_1')\cong\tilde{\lambda}_2'$, the semisimplification of the reduction modulo-$\ell$ of $\alpha(P_2')$ contains $\tilde{\lambda}_2'$. Since $\alpha(P_2')$ is isomorphic to an irreducible component of $P_2\vert_{\tilde{J}_{\rM}'}$, and the reduction modulo $\ell$ of $P_2$ is isomorphic to $\tilde{\lambda}_{\rM}$, combining with the fact that $\tilde{\lambda}_{\rM}\vert_{\tilde{J}_{\rM}'}$ is multiplicity-free, we conclude that $\alpha(P_2')\cong P_2'$. Hence $\tilde{\lambda}_1'\in\cI(\tilde{\lambda}_2')$.
\end{proof}

\begin{defn}
\label{defn 0025}
Let $(J_{\rM},\lambda_{\rM})$ be a maximal simple supercuspidal $k$-type of $\rM$, and denote by $\cI(\lambda_{\rM})$ the set of isomorphic classes of maximal simple supercuspidal $k$-types of $\rM'$ defined by $(J_{\rM},\lambda_{\rM})$. Suppose $(\tilde{J}_{\rM}',\gamma')$ and $(\tilde{J}_{\rM}',\tau')$ be two elements in $\cI(\lambda_{\rM})$, we say
\begin{enumerate}
\item  $\gamma'$ is related to $\tau'$, if $\gamma'\in\cI(\tau')$ (or equivalently $\tau'\in\cI(\gamma')$ by Lemma \ref{lem 0024}) and we denote by $\gamma'\leftrightarrow\tau'$;
\item $\gamma'\sim\tau'$ if there exists a serie $(\tilde{J}_{\rM}',\tilde{\lambda}_i'), 1\leq i\leq t$ for an integer $t$, such that  
$$\gamma'\leftrightarrow\tilde{\lambda}_1'\leftrightarrow...\leftrightarrow\tilde{\lambda}_t'\leftrightarrow\tau',$$
and we call the series $\{\tilde{\lambda}_i',1\leq i\leq t\}$ a connected relation of $\gamma'$ and $\tau'$. The relation $``\sim"$ defines an equivalence relation on $\cI(\lambda_{\rM})$ (By Proposition 2.6 of \cite{C1}, the relation $\leftrightarrow$ and $\sim$ on $\mathcal{I}(\lambda_{\rM})$ does not depend on the choice of $\lambda_{\rM}$).
\item Denote by $[\tilde{\lambda}_{\rM}',\sim]$ the subset of $\cI(\lambda_{\rM})$ consisting of all $\tau'$ such that $\tau'\sim\tilde{\lambda}_{\rM}'$, or equivalently the connected component containing $\tilde{\lambda}_{\rM}'$ defined by $\sim$. 
\end{enumerate}
\end{defn}

Let $\pi$ be an irreducible supercuspidal $k$-representation of $\rM$, denote by $\cI(\pi)$ the isomorphic classes of the irreducible direct components of $\pi\vert_{\rM'}$. Let $(J_{\rM},\lambda_{\rM})$ be a maximal simple supercuspidal $k$-type contained in $\pi$. The above equivalence relation $``\sim"$ on $\cI(\lambda_{\rM})$ induces an equivalence relation on $\cI(\pi)$.

\begin{defn}
\label{defn 0026}
Let $\pi_1',\pi_2'\in\cI(\pi)$, we say $\pi_1'\sim\pi_2'$ if there exists a maximal simple supercuspidal $k$-type $(J_{\rM},\lambda_{\rM})$ contained in $\pi$, and two maximal simple supercuspidal $k$-types $(\tilde{J}_{\rM},\tilde{\lambda}_{\rM,1}')$ and $(\tilde{J}_{\rM},\tilde{\lambda}_{\rM,2}')$ defined from $(J_{\rM},\lambda_{\rM})$, such that $\pi_i'$ contains $\tilde{\lambda}_{\rM,i}'$ for $i=1,2$, and $\tilde{\lambda}_{\rM,1}'\sim\tilde{\lambda}_{\rM,2}'$. By the unicity property of intertwining implying conjugacy in Theorem 3.25 of \cite{C1}, $``\sim"$ defines an equivalence relation on $\cI(\pi)$.
\end{defn}

\begin{rem}
\label{rem 0027} Let $\pi'\in\cI(\pi)$, define $[\pi',\sim]$ to be a subset of $\cI(\pi)$, consisting of the elements that are equivalent to $\pi'$. In other words, $(\pi',\sim)$ is the connected component containing $\pi'$ under the equivalence relation $``\sim"$ on $\cI(\pi)$. In particular, there exists a subset $\{\pi_j',1\leq j\leq s\}$ of $\cI(\pi)$ for an integer $s$, such that $(\pi_j',\sim)$ are two-two disjoint, and $\cup_{j=1}^s(\pi_j',\sim)=\cI(\pi)$. Denote by $[\pi_j',\sim]$ the family of $\rM'$-inertially equivalent classes of $\pi'\in(\pi_j',\sim)$, and we call \textbf{$[\pi_j',\sim]$ a connected $\rM'$-inertially equivalent class of $\pi_j'$}.
\end{rem}


By Theorem \ref{thm 009}, to give a block decomposition of $\mathrm{Rep}_k(\rM')_{\mathcal{SC}}$ is equivalent to give a block decomposition of $\mathrm{Rep}_k(\rM')_{[\rM,\pi]^{tw}}$ for each irreducible supercuspidal $k$-representation $\pi$ of $\rM$. 
\begin{thm}[Block decomposition of $\mathrm{Rep}_k(\rM')_{\mathcal{SC}}$]
\label{thm 0030}
Let $\pi$ be an irreducible supercuspidal $k$-representation of $\rM$, and we occupy the notations in Remark \ref{rem 0027}. For each $1\leq j\leq s$, define the full sub-category $\mathrm{Rep}_k(\rM')_{[\pi_j',\sim]}$, consisting of the objects, of which each irreducible subquotient belongs to $[\pi_j',\sim]$. Then $\mathrm{Rep}_k(\rM')_{[\pi_j',\sim]}$ is a block, and the subcategroy $\mathrm{Rep}_k(\rM')_{[\rM,\pi]^{tw}}\cong\prod_{j=1}^s\mathrm{Rep}_k(\rM')_{[\pi_j',\sim]}$.
\end{thm}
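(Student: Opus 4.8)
The strategy is to produce, for each connected component $(\pi_j',\sim)$, a projective object whose irreducible sub-quotients are exactly the supercuspidal $k$-representations in $[\pi_j',\sim]$, and then to feed these projective objects into the projective version of the splitting criterion (Remark \ref{rem 0029}). Concretely, fix a maximal simple supercuspidal $k$-type $(J_{\rM},\lambda_{\rM})$ contained in $\pi$; by Lemma \ref{lem 0012} the irreducible sub-quotients of the projective covers $\cP_{\tilde\lambda_{\rM}'}$, as $\tilde\lambda_{\rM}'$ ranges over $\cI(\lambda_{\rM})$, are again maximal simple supercuspidal $k$-types defined from $(J_{\rM},\lambda_{\rM})$. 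For a connected component $C_j\subset\cI(\lambda_{\rM})$ of the relation $\sim$, set $\cP_j=\bigoplus_{\tilde\lambda'\in C_j}\ind_{\tilde J_{\rM}'}^{\rM'}\cP_{\tilde\lambda'}$. First I would check, using Proposition \ref{rem 0016} and the bijection $\nu$ of Remark \ref{rem 0023}, that every irreducible sub-quotient of $\cP_j$ lies in a $\rM'$-inertially equivalent class $\nu(\tilde\lambda')$ with $\tilde\lambda'\in C_j$, hence in $[\pi_j',\sim]$; conversely, by Proposition \ref{prop 008} every irreducible cuspidal (hence supercuspidal, by Theorem \ref{thm 009}) $k$-representation whose type lies in $C_j$ is a quotient of $\cP_j$. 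So the set $\mathcal S_j$ of irreducible sub-quotients of $\cP_j$ is exactly the set of supercuspidals belonging to $[\pi_j',\sim]$.

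The next step is to verify the three hypotheses of Remark \ref{rem 0029} with $\cP_1=\cP_j$ and $\cP_2=\bigoplus_{i\neq j}\cP_i$ (possibly together with the projective objects attached to all other $\rG$-twist-equivalent supercuspidal classes, since Theorem \ref{thm 009} already isolates $\mathrm{Rep}_k(\rM')_{[\rM,\pi]^{tw}}$): an object of $\mathcal S_j$ is a quotient of $\cP_j$ (Proposition \ref{prop 008}); an object of $\mathcal S_j$ does not belong to $\mathcal S_i$ for $i\neq j$, because by Lemma \ref{lem 0024} the relation $\leftrightarrow$ is symmetric, so if a type $\tau'\in C_i$ occurred as a sub-quotient of some $\cP_{\tilde\lambda'}$ with $\tilde\lambda'\in C_j$ then $\tau'\sim\tilde\lambda'$ and the two components would coincide; and any supercuspidal outside $\mathcal S_j$ is a quotient of one of the remaining $\cP_i$'s, again by Proposition \ref{prop 008} together with the uniqueness of supercuspidal support for $\rM'$. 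This yields the product decomposition $\mathrm{Rep}_k(\rM')_{[\rM,\pi]^{tw}}\cong\prod_{j=1}^s\mathrm{Rep}_k(\rM')_{[\pi_j',\sim]}$ and the characterization of objects of $\mathrm{Rep}_k(\rM')_{[\pi_j',\sim]}$ in terms of their irreducible sub-quotients.

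It remains to show each factor $\mathrm{Rep}_k(\rM')_{[\pi_j',\sim]}$ is \emph{non-split}, i.e.\ is a genuine block. Here I would argue by contradiction: a further decomposition would induce, for the $\rM'$-conjugacy classes of the types in $C_j$, a partition $C_j=\cI_1\sqcup\cI_2$ into two non-empty $\sim$-saturated pieces separating the corresponding inertial classes. But $C_j$ is by definition a single $\sim$-equivalence class, so there exist $\tilde\lambda'\in\cI_1$ and $\tau'\in\cI_2$ with $\tilde\lambda'\leftrightarrow\tau'$, i.e.\ $\tau'$ is an irreducible sub-quotient of $\cP_{\tilde\lambda'}$; then exactly the argument of Proposition \ref{prop 0017} (applied to $\ind_{\tilde J_{\rM}'}^{\rM'}\cP_{\tilde\lambda'}$, using the filtration coming from $\cP_{\tilde\lambda'}$, Lemma \ref{lem 0018}, and the fact that $\cP_{\tilde\lambda'}$ is a projective cover hence has irreducible head) shows $\ind_{\tilde J_{\rM}'}^{\rM'}\cP_{\tilde\lambda'}$ cannot be split along $\cI_1\sqcup\cI_2$, a contradiction. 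Combined with the observation (already recorded before Proposition \ref{prop 0017}, from \cite[\S III]{V2}) that each $\mathrm{Rep}_k(\rM')_{[\rM',\pi']}$ is itself non-split, this shows no non-trivial splitting of $\mathrm{Rep}_k(\rM')_{[\pi_j',\sim]}$ is possible.

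I expect the main obstacle to be the non-split (block) part, specifically making the reduction ``a splitting of the category $\Rightarrow$ a saturated partition of $C_j$ separating inertial classes'' airtight: one must pass from an abstract category product to a statement about the single projective generator $\ind_{\tilde J_{\rM}'}^{\rM'}\cP_{\tilde\lambda'}$, handling the infinite-length phenomena (whence Lemma \ref{lem 0018}) and the interplay between the two conjugacy relations — $\sim$ on types versus $\rM'$-inertial equivalence of cuspidal representations — via the bijection $\nu$. The rest is essentially bookkeeping with Proposition \ref{prop 008}, Lemma \ref{lem 0012}, Lemma \ref{lem 0024} and the criterion in Remark \ref{rem 0029}.
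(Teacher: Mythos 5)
Your proposal is correct and follows essentially the same route as the paper: the product decomposition is obtained from the projective criterion of Remark \ref{rem 0029} applied to the induced projective covers $\ind_{\tilde{J}_{\rM}'}^{\rM'}\cP_{\tilde{\lambda}'}$ (using Propositions \ref{prop 008}, \ref{rem 0016} and Lemma \ref{lem 0018}), and non-splitness is proved exactly as in the paper, by reducing (via \cite{V2}) to a partition of the inertial classes in $[\pi_j',\sim]$, using connectedness of the $\sim$-class to find two directly related types on opposite sides, and invoking the argument of Proposition \ref{prop 0017}.
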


\begin{proof}
First we prove that $\mathrm{Rep}_k(\rM')_{[\pi_j',\sim]}$ is non-split. By Proposition of \cite{V2}[\S $\mathrm{III}$], we only need to prove that for any non-trivial disjoint union $[\pi_j',\sim]=I_1\sqcup I_2$, where $I_1,I_2$ are two non-trivial families of $\rM'$-inertially equivalence classes, then there exists an object $P\in\mathrm{Rep}_k(\rM')_{[\pi_j',\sim]}$, such that $P$ cannot be decomposed as $P_1\oplus P_2$, where $P_1\in\mathrm{Rep}_k(\rM')_{I_1}$ and $P_2\in\mathrm{Rep}_k(\rM')_{I_2}$. Without loss of generality, we assume that $\pi_j'\in I_1$ and let $\pi_{j_0}'\in\cI(\pi)$ such that $\pi_{j_0}'\in I_2$. Since $\pi_j'\sim\pi_{j_0}'$, there exists a maximal simple supercuspidal $k$-type $(J_{\rM},\lambda_{\rM})$ of $\pi$ and two maximal simple supercuspidal $k$-types $(\tilde{J}_{\rM}',\tilde{\lambda}_{\rM}')$ of $\pi_j'$ and $(\tilde{J}_{\rM}',\tau_{\rM}')$ of $\pi_{j_0}'$, such that $\tilde{\lambda}_{\rM}'\sim\tau_{\rM}'$ in $\cI(\lambda_{\rM})$. By the second part of Definition \ref{defn 0025}, let $\{\tilde{\lambda}_i',1\leq i\leq t\}$ be a series of a connected relation of $\tilde{\lambda}_{\rM}'$ and $\tau_{\rM}'$. Define a new series $\{\tilde{\lambda}_i',0\leq i\leq t+1\}$, by putting $\tilde{\lambda}_{0}'=\tilde{\lambda}_{\rM}'$ and $\tilde{\lambda}_{t+1}'=\tau_{\rM}'$. There exists $0\leq i\leq t$, such that $\nu(\tilde{\lambda}_{i}')\in I_1$ but $\nu(\tilde{\lambda}_{i+1}')\in I_2$, where $\nu$ is defined as in Remark \ref{rem 0023}. Now we consider $\cP_{\rM'}:=\ind_{\tilde{J}_{\rM}'}^{\rM'}\cP_{\tilde{\lambda}_i'}\in\mathrm{Rep}_k(\rM')_{\mathcal{SC}(\tilde{\lambda}_i')}$ (see the beginning of Section \ref{section 0028} for the definition of $\mathcal{SC}(\tilde{\lambda}_i')$), hence $\cP_{\rM'}\in\mathrm{Rep}_k(\rM')_{[\pi_j'\sim]}$. Assume contrarily that $\cP_{\rM'}\cong P_1\oplus P_2$, where $P_1\in\mathrm{Rep}_k(\rM')_{I_1}$ and $P_2\in\mathrm{Rep}_k(\rM')_{I_2}$. Then $P_1\in\mathrm{Rep}_k(\rM')_{I_1\cap\mathcal{SC}(\tilde{\lambda}_i')}$ and $P_2\in\mathrm{Rep}_k(\rM')_{I_2\cap\mathcal{SC}(\tilde{\lambda}_i')}$. Since the union of $I_1\cap\mathcal{SC}(\tilde{\lambda}_i')$ and $I_2\cap\mathcal{SC}(\tilde{\lambda}_i')$ is a non-trivial disjoint union of $\mathcal{SC}(\tilde{\lambda}_i')$, the decomposition $\cP_{\rM'}\cong P_1\oplus P_2$ is contradicted with Proposition \ref{prop 0017}.

Secondly, we prove that $\mathrm{Rep}_k(\rM')_{[\rM,\pi]^{tw}}\cong\prod_{j=1}^s\mathrm{Rep}_k(\rM')_{[\pi_j',\sim]}$. We use the projective version in Remark \ref{rem 0029}. Now fix $j_0$, let $(\tilde{J}_{\rM}',\tilde{\lambda}_{j_0}')$ be a maximal simple supercuspidal $k$-type contained in $\pi_{j_0}'$, defined from a maximal simple supercuspidal $k$-type $(J_{\rM},\lambda_{\rM})$ of $\rM$. By Definition \ref{defn 0026} and Remark \ref{rem 0027}, we fix a maximal simple supercuspidal $k$-type for each $\rM'$-inertially equivalent supercuspidal classs contained in $[\pi_{j_0}',\sim]$, and denote by $\mathcal{I}_{j_0}$ the finite set of these maximal simple supercuspidal $k$-types. Define $\cP_{[\pi_{j_0}',\sim]}:=\oplus_{\tau'\in\mathcal{I}_{j_0}}\ind_{\tilde{J}_{\rM}'}^{\rM'}\cP_{\tau'}$ where $\cP_{\tau'}$ is the projective cover of $\tau'$. For each $1\leq j\leq s$ different from $j_0$, and let $[\pi_j',\sim]=\sqcup_{i=1}^t[\rM',\pi_{j,i}']_{\rM'}$ where $\pi_{j,i}'$ are irreducible supercuspidal and $t\in\mathbb{N}$. Fix a maximal simple supercuspidal $k$-type $(\tilde{J}_{j,i}',\tilde{\lambda}_{j,i}')$ contained in $\pi_{j,i}'$. Define $[\pi_{j_0},\sim]^{\perp}$ to be the union $\cup_{j\neq j_0}[\pi_j,\sim]$ and $\cP_{[\pi_{j_0},\sim]^{\perp}}:=\oplus_{j\neq j_0}\oplus_{i=1}^t\ind_{\tilde{J}_{j,i}'}^{\rM'}\cP_{\tilde{\lambda}_{j,i}'}$. We show that $\cP_{[\pi_{j_0},\sim]}$ and $\cP_{[\pi_{j_0},\sim]^{\perp}}$ verify the conditions in Remark \ref{rem 0029}. By Proposition \ref{rem 0016} and Lemma \ref{lem 0018}, we know that an irreducible sub-quotient of $\cP_{[\pi_{j_0},\sim]}$ belong to $[\pi_{j_0},\sim]$. Meanwhile an irreducible sub-quotient of $\cP_{[\pi_{j_0},\sim]^{\perp}}$ belong to $[\pi_{j_0},\sim]^{\perp}:=\cup_{j\neq j_0}[\pi_j,\sim]$. Condition $1$ and $3$ of Remark \ref{rem 0029} can be deduced from Proposition \ref{prop 008}. Condition $2$ of remark \ref{rem 0029} is verified from Remark \ref{rem 0027} that $``\sim"$ defines an equivalent relation, and $[\pi_{j_0}',\sim]$ is disjoint with $[\pi_{j_0},\sim]^{\perp}$. Hence by repeating the same operation on $\mathrm{Rep}_k(\rM')_{[\pi_{j_0},\sim]^{\perp}}$, and after finite times we obtain the desired decomposition.
\end{proof}

\begin{ex}
\label{ex 0029}
For $\rG'=\mathrm{SL}_n(F)$, when $\ell$ is positive, 
\begin{itemize}
\item it is not always true that the reduction modulo $\ell$ of a an irreducible $\ell$-adic supercuspidal representation of $\rG'$ is irreducible;
\item it is not always true that $\mathrm{Rep}_k(\rG')$ can be decomposed according to the $\rG'$-inertially equivalent supercuspidal classes as in Equation \ref{0001} in the case where $\ell=0$.
\end{itemize}
\end{ex}

\begin{proof}
Let $p=5,n=2,\ell=3$, and denote by $\overline{\rG}=\mathrm{GL}_2(\mathbb{F}_5)$ and by $\overline{\rG}'=\mathrm{SL}_2(\mathbb{F}_5)$. From \cite[\S 11.3.2]{Bonn} we know that there exists two irreducible supercuspidal $\overline{\mathbb{Q}}_{\ell}$-representations $\pi_1,\pi_2$ of $\overline{\rG}$ ($\pi_1$ corresponding to $-j^{\wedge}$ and $\pi_2$ corresponding to $\theta_0$ as in \cite[\S 11.3.2]{Bonn}), such that the reduction modulo $\ell$ of $\pi_1$ and $\pi_2$ are irreducible and coincide to each other. Meanwhile, the restriction $\pi_1\vert_{\overline{\rG}'}$ is irreducible but $\pi_2\vert_{\overline{\rG}'}$ is semisimple with length $2$. We denote by $\bar{\pi}_2$ the reduction modulo $\ell$ of $\bar{\pi}_2$. By \cite[\S 11.3.2]{Bonn} the length of $\bar{\pi}\vert_{\overline{\rG}'}$ is two, and denote by $\bar{\pi}_{2,1},\bar{\pi}_{2,2}$ the two irreducible direct components of  $\bar{\pi}_2\vert_{\overline{\rG}'}$ (in the notation of \cite[\S 11.3.2]{Bonn}, $\bar{\pi}_{2,1}$ and $\bar{\pi}_{2,2}$ corresponds to the reduction modulo $\ell$ of $R_{-}'(\theta_0)$ and $R_{+}'(\theta_0)$ respectively). In other words, the reduction modulo $\ell$ of the irreducible supercuspidal $\overline{\mathbb{Q}}_{\ell}$-representation $\pi_1\vert_{\overline{\rG}'}$ is reducible, and its Jordan-H$\ddot{\mathrm{o}}$lder components consist of $\bar{\pi}_{2,1}$ and $\bar{\pi}_{2,2}$. Both of $\bar{\pi}_{2,1}$ and $\bar{\pi}_{2,2}$ are supercuspidal by \cite[\S 3.2]{C2}, from the fact that their projective covers are cuspidal.

We consider the $\overline{\mathbb{Z}}_{\ell}$-projective cover $\mathcal{Y}_{\bar{\pi}_{2,1}}$ of $\bar{\pi}_{2,1}$. The strategy is to prove that the irreducible $\obQ_{\ell}$-representation $\pi_1\vert_{\rG'}$ is a subquotient of $\mathcal{Y}_{\bar{\pi}_{2,1}}\otimes\obQ_{\ell}$, from which we deduce that $\bar{\pi}_{2,2}$ is a subquotient of $\mathcal{Y}_{\bar{\pi}_{2,1}}\otimes k$, then we apply Proposition \ref{prop 0017}.

Let $\rU$ be the subgroup of upper triangular matrices of $\overline{\rG}$, the reduction modulo $\ell$ gives a bijection between non-degenerate $\overline{\mathbb{Q}}_{\ell}$-characters of $\rU$ and non-degenerate $k$-characters of $\rU$. Let $\theta_{\obQ_{\ell}}$ be a non-degenerate $\obQ_{\ell}$-character of $\rU$, and $\theta_{\ell}$ be the reduction modulo $\ell$ of $\theta_{\obQ_{\ell}}$, which is a non-degenerate $k$-character of $\rU$, such that $\bar{\pi}_{2,1}$ is generic according to $\theta_{\ell}$. By the unicity of Whittaker model, it follows that $\bar{\pi}_{2,2}$ is not generic according to $\theta_{\ell}$. By \cite{C2}, $\mathcal{Y}_{\bar{\pi}_{2,1}}\otimes\obQ_{\ell}$ is semisimple, and can be written as $\oplus_{s\in\mathcal{S}_{s_0}}\pi_{s,\theta_{\obQ_{\ell}}}$. Here $s_0$ is the $\ell'$-semisimple conjugacy class in $\overline{\rG}$ corresponding to $\pi_2$ by the theory of Deligne-Lusztig (or equivalently $s_0$ corresponds to $\theta_0$ under the notations of \cite[\S 11.3.2]{Bonn}), and $\mathcal{S}_{s_0}$ is the set of semisimple conjugay classes in $\overline{\rG}$ whose $\ell'$-part is equal to $s_0$. Denote by $\pi_{s}$ the irreducible supercuspidal $\obQ_{\ell}$-representation corresponding to $s$, and $\pi_{s,\theta}$ the unique irreducible component of $\pi_{s}\vert_{\overline{\rG}'}$ which is generic according to $\theta_{\obQ_{\ell}}$. Hence $\pi_1$ is a subrepresentation of $\mathcal{Y}_{\bar{\pi}_{2,1}}\otimes\obQ_{\ell}$, which implies that $\bar{\pi}_{2,2}$ is a sub-quotient of $\mathcal{Y}_{\bar{\pi}_{2,1}}\otimes k$, which is the $k$-projective cover of $\bar{\pi}_{2,1}$.

To go further to the $p$-adic groups, we conclude that the semisimplification of $\mathcal{Y}_{\pi_{2,1}}\otimes k$ consists with a non-trivial multiple of $\pi_{2,1}$ and a non-trivial multiple of $\pi_{2,2}$.

Now we consider the $p$-adic groups $\rG=\mathrm{GL}_2(F)$ and $\rG'=\mathrm{SL}_2(F)$, suppose that $F=\bQ_{5}$, and $k=\overline{\mathbb{F}}_3$. Let $J=\mathrm{GL}_2(\bZ_5)$, $J^1=1+\mathrm{M}_2(5\bZ_5)$ and $J'=J\cap\rG', J^{1'}=J^1\cap\rG'$. We have $J\slash J^1\cong\overline{\rG}$, and $J'\slash J^{1'}\cong\overline{\rG}'$. We still denote by $\pi_i,\bar{\pi}_i,\bar{\pi}_{2,i},i=1,2$ the corresponding inflation to $J'$ respectively. Hence $(J,\bar{\pi}_i),i=1,2$ are maximal simple supercuspidal $k$-types of $\rG$. According to \cite[3.18]{C1} and the fact that there are $4$ $\rG'$-conjugacy classes of non-degenerate characters on $\rU$, we deduce from the unicity of Whittaker models that for an irreducible cuspidal $k$-representation $\pi$ of $\rG$, the length of $\pi\vert_{\rG'}$ is a divisor of $4$, hence is prime to $5$. By Theorem 3.18 of \cite{C1}, the index $\vert \tilde{J}:J\vert$ is a $p$-power and a divisor of the length $\pi\vert_{\rG'}$, which implies that $\tilde{J}=J$. We deduce firstly that $(J',\pi_1\vert_{J'})$ is a maximal simple supercuspidal $\bar{\mathbb{Q}}_{\ell}$-type of $\rG'$, and $(J',\bar{\pi}_{2,i}), i=1,2$ are a maximal simple supercuspidal $k$-types of $\rG'$. Hence $\ind_{J'}^{\rG'}\pi_1\vert_{J'}$ is irreducible, but its reduction modulo $\ell$ had length two, with two factors $\ind_{J'}^{\rG'}\bar{\pi}_{2,i},i=1,2$, which is the first of this example. Secondly, we have that $(J',\bar{\pi}_{2,1})$ and $(J',\bar{\pi}_{2,2})$ are non $\rG'$-conjugate by the second part of Remark \ref{rem 0023}. By \cite[Propostion 2.35, Theorem 3.30]{C1},  $\Pi_1:=\ind_{J'}^{\rG'}\bar{\pi}_{2,1}$ and $\Pi_2:=\ind_{J'}^{\rG'}\bar{\pi}_{2,2}$ are different irreducible supercuspidal $k$-representations, and they defines different $\rG'$-inertially equivalent classes since there is no non-trivial $k$-characters on $\rG'$. The inflation of $\mathcal{Y}_{\bar{\pi}_{2,1}}$ to $J'$ is the $\overline{\bZ}_3$-projective cover of $\bar{\pi}_{2,1}$. By the previous paragraphs, $\bar{\pi}_{2,2}$ appears as a sub-quotient of $\mathcal{Y}_{\bar{\pi}_{2,1}}$. Apply Theorem \ref{thm 0030}, we conclude that two full sub-categories $\mathrm{Rep}_k(\rG')_{[\rG',\Pi_1]}$ and $\mathrm{Rep}_k(\rG')_{[\rG',\Pi_2]}$ belong to the same block.
\end{proof}

\end{document}